\documentclass[12pt]{amsart}
\usepackage{amsmath,amsthm,amsfonts,amssymb,amscd,euscript}
\usepackage{enumerate,enumitem}
\usepackage{color}
\usepackage{graphics}
\usepackage{mathtools}
\usepackage{tikz}
\usepackage{tkz-graph}
\usetikzlibrary{automata,arrows}
\usetikzlibrary{shapes.geometric}
\usetikzlibrary{patterns}
\usepackage{tkz-euclide}
\usetkzobj{polygons}

\usepackage{graphicx}
\usepackage{caption}
\usepackage{subcaption}
\DeclareCaptionSubType[Alph]{figure}
\usepackage{todonotes}
\usepackage{hyperref}
\usepackage{marginnote}

\makeatletter
\newcommand{\addresshere}{%
  \enddoc@text\let\enddoc@text\relax
}
\makeatother


\input{xy}
\xyoption{all}
\newlength{\defbaselineskip}
\setlength{\defbaselineskip}{\baselineskip}


\setlength{\parindent}{.3 in} \setlength{\textwidth}{6.5 in}
\setlength{\topmargin} {-.2 in} \setlength{\evensidemargin}{0 in}
\setlength{\oddsidemargin}{0 in} \setlength{\footskip}{.3 in}
\setlength{\headheight}{.3 in} \setlength{\textheight}{8.7 in}
\setlength{\parskip}{.1 in}

\theoremstyle{plain}
\newtheorem{theorem}{Theorem}[section]

\newtheorem{corollary}[theorem]{Corollary}
\newtheorem{lemma}[theorem]{Lemma}

\theoremstyle{definition}

\newtheorem{definition}[theorem]{Definition}

\newtheorem{example}[theorem]{Example}
\newtheorem{question}[theorem]{Question}

\newtheorem{conjecture}[theorem]{Conjecture}

\DeclareMathOperator{\mut}{Mut}
\renewcommand{\emptyset}{\varnothing}
\newcommand{\mgsi}{\ensuremath{\fontseries\bfdefault\textit{i}}}
\newcommand{\hdef}[1]{\textbf{#1}}

\begin{document}
\author{John W. Lawson}
\address{Department of Mathematical Sciences, Durham University, Lower Mountjoy,
Stockton Road, DH1 3LE, UK}
\email{j.w.lawson@durham.ac.uk}
\thanks{JL is supported by an EPSRC PhD scholarship.}

\author{Matthew R. Mills}
\email{matthew.mills@huskers.unl.edu}
\thanks{MM is supported by University of Nebraska - Lincoln, and by NSA grant H98230-14-1-0323.}
\address{Department of Mathematics, University of Nebraska - Lincoln, Lincoln, NE, USA}

\title{Properties of minimal mutation-infinite quivers}
\keywords{maximal green sequences, upper cluster algebras}

\begin{abstract} 
We study properties of minimal mutation-infinite quivers. In particular we show that every minimal-mutation infinite quiver of at least rank 4 is Louise and has a maximal green sequence. It then follows that the cluster algebras generated by these quivers are locally acyclic and hence equal to their upper cluster algebra. We also study which quivers in a  mutation-class have a maximal green sequence. For any rank 3 quiver there are at most 6 quivers in its mutation class that admit a maximal green sequence. We also show that for every rank 4 minimal mutation-infinite quiver there is a finite connected subgraph of the unlabelled exchange graph consisting of quivers that admit a maximal green sequence.
\end{abstract}

\maketitle

\section{Introduction}
\label{sec:in}

Cluster algebras were introduced by Fomin and Zelevinsky in~\cite{cluster1} and
have since found applications in various areas of mathematics, from
representation theory to quiver gauge theories. One of the central ideas of these
algebras is mutation of quivers and seeds.

Felikson, Shapiro and Tumarkin classified all mutation-finite quivers, those
which only admit a finite number of other quivers under repeated mutations,
in~\cite{felikson}. Such quivers either arise from triangulations of surfaces or
belong to a small number of exceptional classes. A natural extension of this
idea is the study of minimal mutation-infinite quivers, a quiver which admits an infinite number of quivers under repeated mutation, but any induced proper subquiver is mutation-finite. Felikson et al.\ showed that outside of the rank 3 case there are a finite number of minimal mutation-infinite
quivers in~\cite{felikson} and Lawson classified these quivers in~\cite{lawson}. All mutation-infinite rank 3 quivers are minimal mutation-infinite since every rank 2 quiver is mutation-finite. However, rank 3 quivers have been studied extensively~\cite{assem,bbh,llm,muller,S1} so this paper primarily focuses on minimal mutation-infinite quivers of rank at least 4. 

Maximal green sequences are another aspect of cluster algebras which have been
widely studied, with links to scattering diagrams~\cite{brustle2} as well as to
BPS states of some quantum field theories~\cite{ACCERV}. Mills determined which
mutation-finite quivers admit a maximal green sequence in~\cite{mills}. We show here that every mutation-finite quiver that appears as a subquiver of a minimal mutation-infinite quiver has a maximal green sequence and prove the same result for minimal mutation-infinite quivers.

{%
\renewcommand{\thetheorem}{\ref{thm:mmi_mgs}}%
\begin{theorem}
	Suppose $Q$ is a minimal mutation-infinite quiver of rank at least 4. Then $Q$
	has a maximal green sequence.
\end{theorem}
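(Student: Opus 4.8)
The plan is to leverage two things available by this point: Lawson's classification of the minimal mutation-infinite quivers of rank at least $4$ in \cite{lawson}, which provides a finite explicit list in each rank (organised recursively, with the rank-$4$ quivers as a base and the higher-rank ones obtained by controlled vertex additions), and the theorem proved above that every mutation-finite quiver occurring as an induced subquiver of a minimal mutation-infinite quiver admits a maximal green sequence. The key reduction is that if $Q$ is minimal mutation-infinite then deleting any single vertex $v$ gives a proper induced subquiver, which is mutation-finite by hypothesis and therefore carries a maximal green sequence; so it is enough to promote a maximal green sequence of a one-vertex deletion $Q \setminus v$ (or, more generally, of a proper induced subquiver) to one of $Q$.

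Next I would assemble the tools that perform such a promotion. Acyclic minimal mutation-infinite quivers need nothing: mutating in reverse admissible order is already a maximal green sequence. Beyond that I would use (i) a source/sink lemma --- if $v$ is a source or a sink of $Q$ then a maximal green sequence of $Q \setminus v$ can be extended to one of $Q$, the delicate part being to track $c$-vectors through the interaction of $v$ with its neighbours as they are mutated --- and (ii) the direct-sum construction of Garver and Musiker: if $Q$ is a direct sum $Q_1 \oplus Q_2$ with both summands proper induced subquivers then the $Q_i$ are mutation-finite, hence admit maximal green sequences, hence so does $Q$. Together these cover every minimal mutation-infinite quiver that is acyclic, or has a source or a sink, or admits a nontrivial direct-sum decomposition.

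With these in hand the bulk of the proof is a walk through Lawson's list. For each minimal mutation-infinite quiver of rank at least $4$ one locates a source, a sink, or a direct-sum splitting and applies the relevant lifting lemma to a one-vertex deletion or a proper summand, which is mutation-finite and hence has a maximal green sequence by the companion result; the finitely many exceptional quivers of small rank --- those carrying double arrows or short oriented cycles for which no such feature is present --- are handled by writing down explicit maximal green sequences. The recursive shape of the classification is what reduces the higher-rank cases to rank $4$: the vertex attached when passing from rank $n-1$ to rank $n$ can be arranged so that the structural feature being exploited survives, so one never has to redo the analysis from scratch in each rank.

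The main obstacle is that existence of a maximal green sequence is \emph{not} invariant under quiver mutation (Muller), so the statement genuinely concerns the given quiver $Q$ rather than some convenient quiver in its mutation class; one cannot simply mutate to a nicer representative. The real work is therefore twofold: verifying, case by case, that every minimal mutation-infinite quiver of rank at least $4$ as it literally appears in \cite{lawson} does carry one of the structural features above (and supplying explicit green sequences where it does not), and proving the source/sink lifting lemma carefully, since the $c$-vector recursions there must be followed through all the mutations at the neighbours of the distinguished vertex.
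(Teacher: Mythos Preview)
Your outline is essentially the paper's strategy, but there are two places where you are making life harder than necessary and one misconception about the classification.

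First, the ``source/sink lifting lemma'' you describe as requiring careful $c$-vector bookkeeping is not a separate result at all: if $v$ is a source (or sink), then $Q$ is already a $t$-coloured direct sum of $\{v\}$ and $Q\setminus v$, so Garver--Musiker applies directly. There is nothing to prove beyond what you already cite in (ii).

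Second, once direct sums (hence sources and sinks) are dealt with, the paper is left with exactly $42$ minimal mutation-infinite quivers. Rather than writing down $42$ explicit green sequences, the paper proves a short lemma (Theorem~\ref{thm:triangleend} above): if $Q$ ``ends in a $k$-cycle'' --- an oriented cycle whose vertices, except one, have valency two in $Q$ --- then a maximal green sequence for the complement of the cycle tail can be wrapped into one for $Q$. This is proved by a single application of the reverse rotation lemma to a direct-sum decomposition of a mutated quiver, and it dispatches $35$ of the $42$ cases at once. Only $7$ quivers (the family $\Theta_n$ and one sporadic rank-$6$ quiver) need explicit sequences. Your plan would work, but you would be doing by hand what one lemma does uniformly.

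Finally, Lawson's classification is not recursive in the sense you describe: there is no ``attach a vertex to pass from rank $n-1$ to rank $n$'' mechanism. The classification is into $47$ move-classes across ranks $4$ through $10$, each move-class a finite set of quivers related by specified mutation moves. The finiteness of the whole list is what makes the case-check feasible; no induction on rank is used or available.
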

\addtocounter{theorem}{-1}%
}

Each cluster algebra has an associated upper cluster algebra, introduced by
Berenstein, Fomin and Zelevinsky~\cite{BFZ}, which naturally contains its
cluster algebra. In general it is not known when the upper cluster algebra is equal to
the cluster algebra. One result of~\cite{BFZ} was to show that if a quiver was acyclic and coprime then these two algebras are equal. Muller later showed that the coprime assumption was not necessary and that the result held more generally if a quiver is locally acyclic~\cite{M}. Muller and Speyer went on to identify a stronger property of quivers, called the Louise property, that implies local acyclicity as well as many other very nice properties about the cluster algebra~\cite{MS}. We show that minimal mutation-infinite quivers have this property. 
{%
\renewcommand{\thetheorem}{\ref{thm:mmi_Louise}}%
\begin{theorem}
	If $Q$ is a minimal mutation-infinite quiver of rank at least 4, then the
	quiver is Louise and its cluster algebra is locally-acyclic.
\end{theorem}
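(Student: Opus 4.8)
The plan is to verify the Louise property directly from the recursive criterion of Muller--Speyer~\cite{MS}, using Lawson's classification of minimal mutation-infinite quivers~\cite{lawson}; the local acyclicity of the cluster algebra of $Q$ (and, by Muller~\cite{M}, its equality with the upper cluster algebra) is then automatic from~\cite{MS}. Recall that the criterion is inductive: a quiver with at most one mutable vertex is Louise, and otherwise it suffices to exhibit a covering pair of mutable vertices $\{a,b\}$ --- a pair joined by a single arrow --- such that the two vertex-deletions $Q\setminus a$ and $Q\setminus b$ are themselves Louise. So the argument breaks into (i) producing such a pair, and (ii) checking that its vertex-deletions are Louise.

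Step (ii) is the easy half, and it is where the rank hypothesis is used. Since $Q$ is minimal mutation-infinite, every proper induced subquiver is mutation-finite, so each $Q\setminus a$ is mutation-finite. By the analysis of the preceding sections --- identifying which mutation-finite quivers occur as subquivers of minimal mutation-infinite quivers of rank at least $4$ --- together with the Felikson--Shapiro--Tumarkin classification~\cite{felikson}, none of the mutation-finite quivers that fail to be locally acyclic (among them the Markov quiver, $X_7$, and the quivers of triangulated closed surfaces) can arise here, and those that do arise are Louise. This is exactly what fails in rank $3$, where the Markov quiver is itself minimal mutation-infinite. Note that the recursion cannot be confined to the class of minimal mutation-infinite quivers, since a deletion $Q\setminus a$ need not be minimal mutation-infinite; instead it bottoms out at the (Louise) mutation-finite quivers supplied by this step.

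Step (i) is the real content, and the main obstacle. Here I would work through the explicit representatives of Lawson's classification~\cite{lawson}, which lists the finitely many mutation-equivalence classes of minimal mutation-infinite quivers of rank $\ge 4$ together with the local moves relating members of a class. For each representative the goal is a mutable vertex $v$ attached to the rest of the quiver by a single arrow --- so that $v$ and its unique neighbour form a covering pair --- and, when no representative has such a vertex outright, a short explicit mutation sequence exposing one, chosen so as not to create any of the subquivers excluded in Step (ii). Combined with Step (ii), the covering-pair criterion then closes the induction. The delicate cases are precisely those requiring a preliminary mutation: there one must lean on Lawson's structural description of these quivers rather than on any enumeration of their (infinite) mutation classes, and verifying that the chosen sequences behave as claimed is where most of the work lies. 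The underlying decomposition of $Q$ into a single-arrow vertex and its complementary subquiver is the same one that drives the maximal green sequence theorem, Theorem~\ref{thm:mmi_mgs}.
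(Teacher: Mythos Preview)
Your overall strategy---reduce to Lawson's finite list of representatives and apply the Louise recursion---matches the paper's. But the execution differs in an important way, and your Step~(ii) contains a genuine gap.

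The paper's execution is more direct. It first observes that almost every move-class has a representative that is already acyclic (or becomes acyclic after a single mutation), and acyclic quivers are Louise by definition; this immediately handles every hyperbolic Coxeter move-class except one rank-$4$ class. For each remaining move-class (that one rank-$4$ class, every double-arrow class, every exceptional class) the paper exhibits an explicit separating edge $i\to j$ in a concrete representative and then checks by hand that each of the three deletions $Q'[Q'_0\setminus\{i\}]$, $Q'[Q'_0\setminus\{j\}]$, $Q'[Q'_0\setminus\{i,j\}]$ is acyclic or mutation-acyclic, hence Louise. No general statement about mutation-finite quivers is needed.

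Your Step~(ii), by contrast, asserts that the mutation-finite subquivers arising from deletion are Louise, citing ``the analysis of the preceding sections.'' But those sections (Lemma~\ref{lem:no_bad_subquivers} and Corollary~\ref{cor:mgs_subquivers}) only establish that such subquivers admit \emph{maximal green sequences}; they say nothing about the Louise property, and having a maximal green sequence does not imply Louise. To make your argument work you would need to prove that every mutation-finite quiver avoiding once-punctured closed surfaces and $\mathbb{X}_7$ is Louise---plausible, but not established here and not cited. Note also that the Louise recursion as stated in the paper requires a \emph{separating} edge (no bi-infinite path through it) and three deletions, not merely a single arrow and two deletions; your ``covering pair'' formulation does not quite match. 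The paper sidesteps all of this by verifying the handful of specific subquivers directly.
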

\addtocounter{theorem}{-1}%
}
By showing that all minimal mutation-infinite quivers are Louise
it follows that the cluster algebra generated by any such quiver is equal to its
upper cluster algebra.
{%
\renewcommand{\thetheorem}{\ref{cor:mmi_upper}}%
\begin{corollary}
	If $Q$ is a minimal mutation-infinite quiver of rank at least 4, then the
	cluster algebra $\mathcal{A}(Q)$ is equal to its upper cluster algebra. 
\end{corollary}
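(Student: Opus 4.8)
The plan is to deduce this immediately from Theorem~\ref{thm:mmi_Louise} together with Muller's comparison theorem for locally acyclic cluster algebras~\cite{M}. First I would recall that the statement of Theorem~\ref{thm:mmi_Louise} already delivers two facts about a minimal mutation-infinite quiver $Q$ of rank at least $4$: the quiver is Louise, and, as a consequence of the Muller--Speyer implication ``Louise $\Rightarrow$ locally acyclic'' from~\cite{MS}, the cluster algebra $\mathcal{A}(Q)$ is locally acyclic. So the corollary reduces to the assertion that a locally acyclic cluster algebra equals its upper cluster algebra.

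Second, I would invoke exactly this result of Muller~\cite{M}: if $\mathcal{A}$ is a locally acyclic cluster algebra then $\mathcal{A} = \mathcal{U}$, its upper cluster algebra. For context, the mechanism is that local acyclicity allows one to cover $\operatorname{Spec}\mathcal{A}$ by localizations of $\mathcal{A}$ that are themselves (acyclic) cluster algebras, on each of which the Berenstein--Fomin--Zelevinsky equality $\mathcal{A} = \mathcal{U}$ holds, and then a sheaf-theoretic gluing argument (the upper cluster algebra behaves well under the relevant localizations, being an intersection of Laurent rings) propagates the equality to all of $\mathcal{A}$. Applying this to $\mathcal{A} = \mathcal{A}(Q)$ gives $\mathcal{A}(Q) = \mathcal{U}(Q)$, which is the claim.

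The substantive content of the corollary is entirely upstream, in Theorem~\ref{thm:mmi_Louise}; once the Louise property is in hand the deduction is formal, so I do not expect any genuine obstacle here beyond citing the established machinery correctly. The one bookkeeping point I would make explicit is that being locally acyclic---equivalently, the equality $\mathcal{A} = \mathcal{U}$---is a property of the cluster algebra itself and therefore independent of the choice of seed, so it is legitimate to move freely between the particular quiver $Q$ and its mutation class when applying the two cited results; this is standard and requires no further argument.
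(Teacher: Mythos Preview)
Your proposal is correct and mirrors the paper's own argument exactly: the paper's proof is the single line ``Follows from Theorem~\ref{thm:LA_AU} and Theorem~\ref{thm:mmi_Louise},'' i.e.\ Louise $\Rightarrow$ locally acyclic $\Rightarrow$ $\mathcal{A}=\mathcal{U}$. The extra context you supply about Muller's gluing mechanism and mutation-invariance is accurate but not needed for the deduction.
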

\addtocounter{theorem}{-1}%
}
Such a result would not be especially interesting if the minimal
mutation-infinite quivers of the same rank generate the same cluster algebra, so we determine
which mutation classes these quivers belong to. The different move-classes given by
the classification of minimal mutation-infinite quivers in~\cite{lawson} groups
together certain mutation-equivalent quivers and we show that most of these groups belong to
different mutation classes.

{%
\renewcommand{\thetheorem}{\ref{thm:hcs_diff_class} and~\ref{thm:diff_class}}%
\begin{theorem}
	With one exception, the move-classes of minimal mutation-infinite
	quivers with hyperbolic Coxeter simplex representatives or with double arrow
	representatives all belong to distinct mutation classes.
\end{theorem}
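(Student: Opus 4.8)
The plan is to reduce the statement to a comparison of finitely many \emph{acyclic} quivers and then to separate their mutation classes with a single mutation invariant: the underlying valued graph of an acyclic quiver in the class.

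First I would recall from the classification in~\cite{lawson} that each of the moves defining a move-class is realised by an explicit finite sequence of mutations, so that a move-class is contained in one mutation class. It therefore suffices to show that a chosen representative of each relevant move-class -- an acyclic quiver, a ``double arrow'' being simply an edge of multiplicity two in an otherwise acyclically oriented quiver -- lies in a mutation class distinct from that of every other such representative, together with exhibiting one explicit mutation sequence realising the single exceptional coincidence. Since rank is a mutation invariant, comparisons only ever occur within a fixed rank, and for each rank there are only finitely many move-classes of the two types in question, so the bulk of the argument is a finite inspection.

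The key input is that mutation-equivalent acyclic quivers have isomorphic underlying valued graphs -- equivalently, two mutation-equivalent acyclic quivers differ by a sequence of mutations at sources and sinks, none of which alters the underlying valued graph. Granting this (either citing it, or re-deriving it for the quivers at hand using that all proper subquivers are mutation-finite), two move-classes whose representatives have non-isomorphic underlying valued graphs lie in distinct mutation classes automatically. I would then run through the lists of~\cite{lawson} together with the finite classification of hyperbolic Coxeter simplices and check: that the Coxeter diagrams of distinct hyperbolic Coxeter simplices are pairwise non-isomorphic as valued graphs; that none of them is isomorphic to the underlying valued graph of a double-arrow representative; and that the double-arrow representatives themselves have pairwise non-isomorphic underlying valued graphs. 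Each of these is a direct comparison against the tables.

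The remaining, and hardest, part is the handful of pairs of move-classes whose representatives happen to share an underlying valued graph: this is exactly where a genuine coincidence of mutation classes can occur, hence where the lone exception must appear. For such a pair one decides mutation-equivalence by testing whether the two acyclic orientations of the common graph are related by source/sink reflections -- a finite combinatorial check, since the reflection-equivalence classes of acyclic orientations of a fixed finite graph can be enumerated directly. When the two orientations are not reflection-equivalent, the move-classes lie in distinct mutation classes; when they are, one records this as the exception and confirms it with the explicit mutation sequence from the first step. Since all of this lives in rank $4$ (together with the few higher ranks admitting Coxeter-simplex representatives), every comparison is finite and can be done by hand or by a short search. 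The main obstacle is thus not a single deep step but the completeness of the bookkeeping across the classification, so that precisely one coincidence is found and none is overlooked.
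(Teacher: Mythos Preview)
Your proposal rests on the claim that the double-arrow representatives are acyclic (``an edge of multiplicity two in an otherwise acyclically oriented quiver''), and this is false. In the paper it is shown, via admissible quasi-Cartan companions (Lemma~\ref{lem:adm-nonacyc} and Lemma~\ref{lem:da-mutacyc}), that every double-arrow move-class is \emph{not} mutation-acyclic: there is no acyclic quiver anywhere in those mutation classes. Consequently your principal invariant---the underlying valued graph of an acyclic member of the class, together with the Caldero--Keller source/sink rigidity---is simply undefined for all of the double-arrow classes, and the comparison you propose cannot be carried out. The same obstruction arises for one of the rank~$4$ hyperbolic Coxeter simplex move-classes, which is likewise shown to be non-mutation-acyclic (Theorem~\ref{thm:rank4_not_acyclic}).

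The paper's argument is genuinely different. For the Coxeter-simplex classes it uses the rank of $B_Q$ together with the \emph{number} of acyclic quivers in the class (Tables~\ref{tab1} and~\ref{tab2}) rather than the underlying graph; this invariant still makes sense (it equals zero) for a non-mutation-acyclic class. To separate the double-arrow classes from the Coxeter-simplex classes of the same rank, the paper first proves non-mutation-acyclicity of the former, which immediately distinguishes them from all mutation-acyclic Coxeter classes; the double-arrow classes are then separated from one another by quiver rank and $B$-matrix rank. Your source/sink approach would work for the pairwise comparison of the genuinely acyclic Coxeter representatives, but it cannot substitute for the quasi-Cartan step that handles the non-acyclic classes.
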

\addtocounter{theorem}{-1}%
}

As each move-class belongs to a unique mutation class and each quiver in the
move-class has a maximal green sequence, we explored which other quivers in the
mutation class admit a maximal green sequence. In the rank 3 case we show that the number of such quivers is bounded.

{%
\renewcommand{\thetheorem}{\ref{thm:finite_rank3_mgs}}%
\begin{theorem}
 For any rank 3 quiver there are at most 6 quivers (up to relabelling of the vertices) in its mutation class that admit a maximal green sequence. 
\end{theorem}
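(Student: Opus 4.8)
The plan is to reduce the statement to two ingredients: a description of which rank $3$ quivers admit a maximal green sequence, and a count of the acyclic quivers in a rank $3$ mutation class. I record a connected rank $3$ quiver as an (oriented) triangle with edge labels $a,b,c\ge 0$, where a label $0$ means the underlying graph is a path. The only fact about mutation I need is elementary: if $v$ is a vertex of an oriented $3$-cycle, incident to the edges carrying $y$ and $z$ arrows and opposite to the edge carrying $x$ arrows, then $\mu_v$ leaves $y,z$ unchanged and replaces $x$ by $|yz-x|$, and the result is again an oriented $3$-cycle when $yz>x$ and is acyclic (on a triangle or a path) when $yz\le x$. Combining this with the classification of mutation-finite rank $3$ quivers, the mutation class of $Q$ is one of the three finite classes — type $A_3$ (the three orientations of the path on $3$ vertices, together with the oriented $3$-cycle $(1,1,1)$), type $\widetilde A_2$ (an acyclically oriented triangle with single edges, together with the oriented $3$-cycle $(2,1,1)$), and the Markov class $\{(2,2,2)\}$ — or else an infinite class, which is either \emph{cluster-cyclic} (all of whose members are oriented $3$-cycles, necessarily with all labels $\ge 2$) or contains an acyclic quiver.

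Next I would pin down which quivers admit a maximal green sequence. Acyclic quivers always do, so it remains to treat the oriented $3$-cycles, and here I would invoke the classification of maximal green sequences of $3\times 3$ skew-symmetric matrices (established in the rank $3$ literature cited above, with the mutation-finite cases covered by Mills's theorem): the only oriented $3$-cycles admitting a maximal green sequence are $(1,1,1)$ and $(2,1,1)$ up to relabelling. In particular the Markov quiver and \emph{every} oriented $3$-cycle lying in an infinite mutation class admits none. Granting this, the quivers in the mutation class of $Q$ that admit a maximal green sequence are exactly its acyclic members, together with the cycle $(1,1,1)$ when the class is $A_3$ and the cycle $(2,1,1)$ when the class is $\widetilde A_2$.

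To finish I count the acyclic quivers in a rank $3$ mutation class. They all share the same underlying multigraph on $3$ vertices: the underlying graph of an acyclic quiver is a mutation invariant, and for rank $3$ one sees directly that acyclic representatives are joined to one another by source and sink mutations, which preserve the graph. A connected multigraph on $3$ vertices is a path, with $4$ orientations, all acyclic, or a triangle, with $6$ acyclic orientations; up to isomorphism these counts are at most $4$ and at most $6$ respectively, the value $6$ occurring only for a triangle with three pairwise distinct edge multiplicities (hence no nontrivial automorphism). Tallying the cases: an infinite cluster-cyclic class contributes $0$; an infinite class with an acyclic quiver contributes only its acyclic members, at most $6$; the class $A_3$ contributes $3+1=4$; the class $\widetilde A_2$ contributes $1+1=2$; and the Markov class contributes $0$. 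In every case the count is at most $6$. (It is exactly $6$, for instance, in the mutation class of the oriented $3$-cycle $(1,2,5)$, whose acyclic members are the six orientations of the triangle with edge multiplicities $1,2,3$.)

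The real obstacle is the middle step: showing that an oriented $3$-cycle in an infinite mutation class admits no maximal green sequence is a non-existence statement that must rule out infinitely many candidate green sequences at once. The underlying mechanism is the rank $3$ incarnation of what already obstructs the Markov quiver — any such sequence is forced to remain among the oriented $3$-cycles of the class with all labels $\ge 2$, where the $c$-vectors are driven into an expanding cone and can never simultaneously become negative, so the sequence cannot terminate — and the difficulty is in making this argument uniform over all such cycles via the arithmetic of the $|yz-x|$ rule (or, equivalently, in quoting the full $3\times 3$ classification). Once that is in hand, the first and third steps are routine bookkeeping about orientations of three-vertex graphs.
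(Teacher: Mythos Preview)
Your argument has a genuine gap at the ``middle step'': the classification of which oriented $3$-cycles admit a maximal green sequence is wrong. You assert that only $(1,1,1)$ and $(2,1,1)$ do, and hence that \emph{no} oriented $3$-cycle in an infinite mutation class has one. But in fact every oriented $3$-cycle $Q_{a,b,c}$ with $\min(a,b,c)=1$ has a maximal green sequence (this is Theorem~\ref{thm:rank3_no_mgs}'s companion, proved directly in the paper: if $a=1$ then $(2,1,3,2)$ or $(2,3,1,2)$ works). Thus, for example, $Q_{1,2,5}$ lies in an infinite mutation-acyclic class and \emph{does} have a maximal green sequence. So the quivers with maximal green sequences in such a class are not only the acyclic ones, and your count collapses.

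There is a second error in the bookkeeping. You claim that all six acyclic orientations of a triangle with pairwise distinct edge multiplicities lie in one mutation class. They do not: by Caldero--Keller, the acyclic quivers in a mutation class are exactly a single sink/source orbit, and the six acyclic orientations of a triangle split into two such orbits (each of size three, each the opposite of the other). So a mutation-acyclic class with triangular underlying graph contains at most \emph{three} acyclic quivers, not six. In your closing example, the class of $Q_{1,2,5}$ contains only three acyclic quivers together with the two cycles $\{1,2,5\}$ and $\{1,3,5\}$, giving $|\Psi|=5$, not $6$. (The bound $6$ is actually attained in a different case: triangular with $a=b=1$, $c>1$.) The paper avoids both pitfalls by working case by case from an acyclic representative, explicitly drawing $\Psi$ in each of seven cases, and checking that every quiver one mutation step outside $\Psi$ is a cycle with all weights $\ge 2$; monotonicity of the weights under further mutation (from~\cite{assem}) then confines $\Psi$ to the drawn region.
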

\addtocounter{theorem}{-1}%
}

One way to visualise the mutation class is by considering the quiver exchange graph, and we use this construction to prove that in the rank 4 case those quivers which have maximal green sequences, and are obtained from mutating a minimal mutation-infinite quiver a small number of times, form a connected
subgraph in the exchange graph.

{%
\renewcommand{\thetheorem}{\ref{thm:rank4_eg}}%
\begin{theorem}
	Let $Q$ be a minimal mutation-infinite quiver of rank 4. Then the subgraph
	$\Psi$ of the quiver exchange graph $\Gamma$ containing all quivers with
	maximal green sequences is a proper subgraph of $\Gamma$ and the connected
	component $\widehat{\Psi}$ of $\Psi$ that contains $Q$ is finite and contains
	the entire move-class of $Q$.
\end{theorem}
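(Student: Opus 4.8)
The plan is to deduce properness of $\Psi$ from the finiteness of $\widehat{\Psi}$, and then to establish the two substantive assertions: that $\widehat{\Psi}$ contains the move-class of $Q$, and that $\widehat{\Psi}$ is finite. For the first reduction, note that since $Q$ is mutation-infinite its mutation class is infinite, so $\Gamma$ has infinitely many vertices, and $\Gamma$ is connected because any two quivers in a mutation class are joined by a sequence of mutations. Hence if $\widehat{\Psi}$ is finite then $\widehat{\Psi}$ is a proper subgraph of $\Gamma$, and moreover $\Psi\neq\Gamma$: otherwise $\Psi$ would be connected and would coincide with its only component $\widehat{\Psi}$, contradicting finiteness. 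So once finiteness of $\widehat{\Psi}$ is proved, properness follows for free, and everything reduces to the two remaining claims.

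To see that the move-class of $Q$ lies in $\widehat{\Psi}$, first observe that by Theorem~\ref{thm:mmi_mgs} every minimal mutation-infinite quiver of rank $4$ admits a maximal green sequence, so every quiver in the move-class of $Q$ is a vertex of $\Psi$. It then remains to connect them to $Q$ \emph{within} $\Psi$. Here I would use the explicit description of the moves relating members of a move-class from~\cite{lawson}: each move is realised by a short, explicitly given sequence of mutations, and one checks — case by case over the finitely many move-types appearing in the rank $4$ classification — that each intermediate quiver in such a sequence also admits a maximal green sequence, by exhibiting one explicitly. This places the entire move-class in a single connected component of $\Psi$, namely $\widehat{\Psi}$.

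The heart of the argument is the finiteness of $\widehat{\Psi}$, which I would obtain by a terminating breadth-first exploration of $\Gamma$ starting at $Q$, in which a quiver is explored only after it has been certified to admit a maximal green sequence. Two ingredients drive this. First, a quiver admitting a maximal green sequence is certified by producing one explicitly, which is a finite search. Second, a quiver is certified \emph{not} to admit a maximal green sequence whenever it has an induced rank $3$ subquiver that is an oriented $3$-cycle and mutation-cyclic: the existence of a maximal green sequence passes to induced subquivers, a mutation-cyclic rank $3$ quiver has no maximal green sequence (cf.~\cite{bbh,muller}), and being mutation-cyclic is decidable via the standard characterisation of rank $3$ mutation classes. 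Performing this exploration from each of the finitely many rank $4$ minimal mutation-infinite quivers of~\cite{lawson}, one finds that after boundedly many mutations every quiver on the frontier has either already been visited or acquired a mutation-cyclic induced rank $3$ subquiver; at that point the component closes up, proving $\widehat{\Psi}$ finite (and re-confirming that the move-class lies inside it).

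The main obstacle is guaranteeing that this exploration terminates: a priori the set of quivers with maximal green sequences reachable from $Q$ through such quivers could be infinite. What makes it finite is that mutating away from the move-class of a minimal mutation-infinite quiver quickly forces ``balanced'' triangular subquivers, i.e.\ induced rank $3$ oriented cycles lying in the mutation-cyclic range, and such a subquiver obstructs the existence of a maximal green sequence. Converting this heuristic into a proof is exactly the finite verification described above, carried out for each move-class; the two external inputs it relies on are the restriction property for maximal green sequences under passage to induced subquivers and the characterisation of which rank $3$ quivers are mutation-cyclic.
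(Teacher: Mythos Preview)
Your overall strategy matches the paper's: a breadth-first exploration of $\Gamma$ from $Q$, certifying membership in $\Psi$ by exhibiting a maximal green sequence and certifying non-membership by an obstruction, until the frontier consists entirely of obstructed quivers. The problem is your obstruction test. You assert that every frontier quiver eventually ``acquires a mutation-cyclic induced rank $3$ subquiver,'' and this is the \emph{only} mechanism you propose for proving a quiver has no maximal green sequence. In the paper's computation this is false: part of the boundary of $\widehat{\Psi}$ consists of the rank $4$ quivers $R_{a,b,c}$ of Figure~\ref{fig:rank4_bad} (and their opposites) for the triples in Lemma~\ref{lem:bad_rank4}, and these do \emph{not} contain any mutation-cyclic rank $3$ subquiver. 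For instance, in $R_{0,2,3}$ the four induced rank $3$ subquivers are either acyclic or the oriented cycle $Q_{1,3,2}$, which has an edge of multiplicity $1$ and is therefore mutation-acyclic. So your test cannot certify that $R_{0,2,3}$ lacks a maximal green sequence, and your exploration has no way to close off the component there.

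The paper fills exactly this gap with the notion of a \emph{good sequence} (Lemma~\ref{lem:good_verticesR}, Corollary~\ref{cor:good_sequenceR}, Lemma~\ref{lem:bad_rank4}): one shows that in $R_{a,b,c}$ there is a unique good vertex, that iterating good mutations produces quivers with strictly growing edge multiplicities, and hence that no good sequence---in particular no maximal green sequence---can return to $R_{a,b,c}$. This is the missing idea in your argument. Once these $R_{a,b,c}$ are handled, the boundary of the explored region is accounted for by the two mechanisms together (bad rank $3$ subquivers and the $R_{a,b,c}$ family), and the verification terminates as in the paper. Your reductions (deducing $\Psi\neq\Gamma$ from finiteness of $\widehat{\Psi}$, and placing the move-class inside $\widehat{\Psi}$) are fine, but the finiteness argument as written does not go through without this extra ingredient.
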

\addtocounter{theorem}{-1}%
}

This raises a number of questions related to the arrangement of quivers with
maximal green sequences in infinite quiver exchange graphs. In general it is not currently
known whether the subgraph of quivers with maximal green sequences is connected,
nor whether there are a finite number of quivers with maximal green sequences in
a given mutation class.

In this paper Sections~\ref{sec:mgs} and~\ref{sec:upper} remind the reader of
the basic definitions of quiver mutation, maximal green sequences, cluster
algebras and upper cluster algebras, along with a number of known results on
these topics which are used throughout this paper. Section~\ref{sec:mmi-mut} contains some results on the mutation classes of these
quivers. In Section~\ref{sec:mmi-mgs}
we show the existence of a maximal green sequence for all minimal
mutation-infinite quivers and in Section~\ref{sec:mmi-upper} show that their
cluster algebra is equal to its upper cluster algebra.
 In Section~\ref{sec:mut_class_mgs} we discuss which quivers in the mutation class of rank 3 and rank 4 minimal mutation-infinite quivers have maximal green sequences, and finally in Section~\ref{sec:conj} we present a number of conjectures
and questions which build on this work.

The authors would like to thank Kyungyong Lee and Pavel Tumarkin for helpful comments that improved the presentation of this work. 

\section{Quivers and Maximal green sequences}\label{sec:mgs}

Maximal green sequences were first studied by Keller in~\cite{keller2}. In the
following we use the conventions established by Br\"ustle, Dupont and P\'erotin in \cite{brustle}.

\begin{definition}
A \hdef{(cluster) quiver} is a directed graph with no loops or 2-cycles. An \hdef{ice quiver} is a pair $(Q, F)$ where $Q$ is a quiver and $F$ is a subset of the vertices of $Q$ called \hdef{frozen vertices;} such that there are no edges between frozen vertices. If a vertex of $Q$ is not frozen it is called \hdef{mutable}. For convenience, we assume that the mutable vertices are labelled $\{1,\ldots, n\}$, and frozen vertices are labelled by $\{n+1, \ldots, n+m\}$. 
\end{definition}

For a quiver $Q$ we denote the set of vertices as $Q_0$ and the set of edges
$Q_1$.

\begin{definition}\label{def:mutation}
Let $(Q,F)$ be an ice quiver, and $k$ a mutable vertex of $Q$. The \hdef{mutation} of $(Q,F)$ at vertex $k$ is denoted by $\mu_k$, and is a transformation $(Q,F)$ to a new ice quiver $(\mu_k(Q),F)$ that has the same vertices, but making the following adjustment to the edges:  \begin{enumerate}
\item For every 2-path $i \rightarrow k \rightarrow j$, add a new arrow $i \rightarrow j$. 
\item Reverse the direction of all arrows incident to $k$. 
\item Delete a maximal collection of 2-cycles created during the first two steps, and any arrows between frozen vertices. 
\end{enumerate} \end{definition} 
Mutation at a vertex is an involution, and gives an equivalence relation. We
define $\mut(Q)$ to be the equivalence class of all quivers under this relation that can be obtained from $Q$ by a sequence of mutations. The mutation class of minimal mutation-infinite quivers are the object of study in Section~\ref{sec:mut_class_mgs} so we introduce a way to visualize this set.

\begin{definition}
The \hdef{unlabelled exchange graph} of a quiver $Q$, denoted $\Gamma(Q),$ is the graph with vertex set $\mut(Q)$ and an edge between two vertices if and only if the corresponding quivers are related by a single mutation.
\end{definition}

Given a proper subset $V$ of vertices of a quiver $Q$, the \hdef{induced subquiver}
$Q[V]$, is the quiver constructed from $Q$ by removing all vertices not in $V$
and all edges incident to these removed vertices. As such, $Q[V]$ has vertex set
$V$ and edges in $Q[V]$ are those edges in $Q$ between pairs of vertices in $V$.
If $V$ is a proper subset of vertices then $Q[V]$ is said to be a \hdef{induced proper subquiver}. 

If $\mut(Q)$ is finite (resp., infinite), then $Q$ is \hdef{mutation-finite} (resp., \hdef{mutation-infinite}). If $Q$ is mutation-infinite and every induced proper subquiver is mutation-finite then $Q$ is \hdef{minimal mutation-infinite}.

\begin{definition}{\cite[Definition 2.4]{brustle}}
The \hdef{framed quiver} associated with a quiver $Q$ is the ice quiver $(\hat{Q},Q_0')$ such that:

$$Q_0'=\{i'\text{ }|\text{ }i\in Q_0\}, \hspace{.6cm} \hat{Q}_0 = Q_0 \sqcup Q_0'$$
$$\hat{Q}_1 = Q_1 \sqcup \{i \to i'\text{ }|\text{ }i \in Q_0\}$$
\end{definition}

Since the frozen vertices of the framed quiver are so natural we will simplify the notation and just write $\hat{Q}$. Now we must discuss what is meant by red and green vertices.

\begin{definition}{\cite[Definition 2.5]{brustle}}\label{def:green1}
Let $R \in \mut(\hat{Q})$. \\A mutable vertex $i \in R_0$ is called \hdef{green} if $$\{j'\in Q_0'\text{ }| \text{ } \exists \text{ } j' \rightarrow i \in R_1 \}=\emptyset.$$ It is called \hdef{red} if $$\{j'\in Q_0'\text{ }| \text{ } \exists \text{ } j' \leftarrow i \in R_1 \}=\emptyset.$$ 
\end{definition}

While it is not clear from the definition that every mutable vertex in $R_0$ must be
either red or green, this was shown to be true for quivers in~\cite{derksen} and
was also shown to be true in a more general setting in~\cite{gross}.

\begin{theorem}\cite{derksen,gross}\label{thm:signcoh}
Let $R \in \mut(\hat{Q})$. Then every mutable vertex in $R_0$ is either red or green.
\end{theorem}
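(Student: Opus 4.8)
The statement is equivalent to the \emph{sign-coherence of $c$-vectors}, and that is what I would prove. For a mutable vertex $i$ of $R\in\mut(\hat Q)$, record the arrows between $i$ and the frozen vertices as the vector $c_i(R)\in\mathbb{Z}^{n}$ whose $j$-th entry is $\#\{i\to j'\ \text{in}\ R\}-\#\{j'\to i\ \text{in}\ R\}$ (a signed multiplicity, since $R$ has no $2$-cycles); then $c_i(\hat Q)=e_i$, and Definition~\ref{def:green1} says precisely that $i$ is green exactly when $c_i(R)$ has no negative entry and red exactly when $c_i(R)$ has no positive entry. So the theorem asserts that every $c_i(R)$, as $R$ ranges over $\mut(\hat Q)$, lies in $\mathbb{Z}_{\geq 0}^{n}\cup(-\mathbb{Z}_{\geq 0}^{n})$; and $\mu_k$ on $\hat Q$ induces the usual matrix recursion carrying $(c_1(R),\dots,c_n(R))$ to the corresponding data of $\mu_k(R)$. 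This is all I need from the combinatorics.

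The route I would take is through scattering diagrams, following Gross--Hacking--Keel--Kontsevich. Let $B=B(Q)$ be the skew-symmetric exchange matrix of $Q$ and let $\mathfrak{D}_{B}$ be the associated consistent scattering diagram in $\mathbb{R}^{n}$: a locally finite collection of codimension-one rational cones (the \emph{walls}), each carrying a wall-crossing function, produced by the Kontsevich--Soibelman order-by-order completion of the finite initial diagram with walls $e_1^{\perp},\dots,e_n^{\perp}$. Two features of this construction are the ones that matter. First, every wall $\mathfrak{d}$ lies in a single hyperplane $n_0(\mathfrak{d})^{\perp}$ for a primitive vector $n_0(\mathfrak{d})$, and its wall-crossing function is a power series in the one monomial $z^{n_0(\mathfrak{d})}$ congruent to $1$ modulo that monomial. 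Second --- and this is the source of positivity --- the consistency (path-ordered product) relations never create new directions outside the monoid generated by the initial normals, so $n_0(\mathfrak{d})\in\mathbb{Z}_{\geq 0}^{n}$ for every wall.

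Now locate the seeds inside $\mathfrak{D}_{B}$. The mutation sequences issuing from $Q$ index a distinguished family of chambers of $\mathfrak{D}_{B}$, the \emph{cluster chambers} $\mathcal C^{R}$, with $\mathcal C^{Q}$ the positive orthant and with $\mu_k$ corresponding to crossing the wall supported on the image of $e_k^{\perp}$; the attached wall-crossing automorphism realizes exactly the piecewise-linear mutation rule for $g$-vectors. Inductively on the length of a mutation path, $\mathcal C^{R}$ is the cone spanned by the $g$-vectors of $R$, and each of its $n$ facets lies on a wall of $\mathfrak{D}_{B}$ whose primitive normal vector is $\pm c_i(R)$ for the appropriate $i$. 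Combining this with the second feature above, each $c_i(R)$ is $\pm$ a vector in $\mathbb{Z}_{\geq 0}^{n}$, hence sign-coherent, which by the dictionary of the first paragraph means every mutable vertex of every $R\in\mut(\hat Q)$ is red or green.

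The hard part is the middle identification: that the cluster chambers are genuinely chambers of the \emph{consistent} diagram $\mathfrak{D}_{B}$ and that their facet normals are the $c$-vectors. This rests on the full GHKK apparatus --- existence and uniqueness of $\mathfrak{D}_{B}$, invariance of scattering diagrams under the piecewise-linear coordinate changes induced by mutation, and careful bookkeeping of how these changes move walls --- and is where essentially all the work lies. I would also record the alternative representation-theoretic route of Derksen--Weyman--Zelevinsky: fix a generic potential $W$ on $Q$ and work in the category of decorated representations of the Jacobian algebra with their mutation operation; one shows that the decorated representation attached to $(i,R)$ by iterated mutation of the decorated simples is either \emph{positive} (an honest module $M$, in which case $c_i(R)=\underline{\dim}\,M\geq 0$) or \emph{negative} (a shift of a simple, so $c_i(R)\leq 0$), but never of mixed type; there the crux is the ``no mixed signs'' lemma, proved via the $E$-invariant and the rigidity of these representations under mutation.
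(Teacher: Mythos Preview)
The paper does not give its own proof of this theorem: it is stated with a citation to \cite{derksen,gross} and used as a black box, so there is nothing in the paper to compare your argument against. Your proposal correctly identifies the statement as sign-coherence of $c$-vectors and sketches precisely the two routes those references take --- the scattering-diagram argument of Gross--Hacking--Keel--Kontsevich and the decorated-representation argument of Derksen--Weyman--Zelevinsky --- so as an outline of how the cited literature establishes the result it is accurate, though of course the substantive work (existence and mutation-invariance of the consistent scattering diagram, or the $E$-invariant analysis) is only gestured at rather than carried out.
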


\begin{definition}[{\cite[Definition 2.8]{brustle}, \cite[Section 5.14]{keller2}}]
A \hdef{green sequence} for $Q$ is a sequence of vertices $\mgsi=(i_1, \dotsc, i_l)$ of $Q$ such that $i_1$ is green in $\hat{Q}$ and for any $2\leq k \leq l$, the vertex $i_k$ is green in $\mu_{i_{k-1}}\circ \dotsb \circ \mu_{i_1}(\hat{Q})$.
A green sequence $\mgsi$ is \hdef{maximal} if every mutable vertex in $\mu_{i_{l}}\circ \dotsb \circ \mu_{i_1}(\hat{Q})$ is red.
\end{definition} 

In~\cite{brustle}, Br\"ustle, Dupont and P\'erotin showed that a maximal green
sequence preserves the quiver:

\begin{lemma}\cite[Proposition 2.10]{brustle}\label{lem:iso_mgs}
	If $\mgsi$ is a maximal green sequence for a quiver $Q$ then $\mu_{\mgsi}(Q)$
	is isomorphic to $Q$.
\end{lemma}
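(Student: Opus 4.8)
The plan is to lift the sequence to the framed quiver $\hat Q$, read off the $C$-matrix at the end of the mutation sequence, and apply the rule relating the exchange matrix of a mutated seed to its $C$-matrix.

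First I would reduce the claim to a statement about $\hat Q$. The extended exchange matrix of $\hat Q$ is $\begin{pmatrix} B(Q) \\ I_n \end{pmatrix}$, where $B(Q)$ is the exchange matrix of $Q$, and under any sequence of mutations the top (mutable) $n \times n$ block evolves exactly as the exchange matrix of $Q$ under the same sequence --- the frozen rows never feed into the mutable block in the mutation formula. Hence $\mu_{\mgsi}(Q)$ is precisely the induced subquiver on the mutable vertices of $R := \mu_{\mgsi}(\hat Q)$, and it is enough to show this mutable part is isomorphic to $Q$.

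Because $\mgsi$ is \emph{maximal}, every mutable vertex of $R$ is red, so by the definition of red together with Theorem~\ref{thm:signcoh} the $C$-matrix $C$ of $R$ --- whose $i$-th column records the signed multiplicities of the arrows between the frozen vertices and the mutable vertex $i$ --- has all entries $\le 0$. I would then use two standard properties of $C$-matrices. First, $\det C = \pm 1$ (a single mutation multiplies the $C$-matrix by an integer matrix of determinant $-1$), and combined with the sign-coherence of the $C$- and $G$-matrices this forces a nonpositive $C$ to equal $-P$ for a permutation matrix $P$. Second, the exchange matrix of the mutated seed and its $C$-matrix satisfy $B(\mu_{\mgsi}(Q)) = C^{\mathsf T} B(Q)\, C$ (Nakanishi--Zelevinsky tropical duality). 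Putting these together, $B(\mu_{\mgsi}(Q)) = (-P)^{\mathsf T} B(Q)\,(-P) = P^{\mathsf T} B(Q)\, P$, which is exactly the exchange matrix of $Q$ with its vertices relabelled by the permutation encoded by $P$; hence $\mu_{\mgsi}(Q) \cong Q$.

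The crux is the step $C = -P$. Sign-coherence alone does not suffice, since a nonpositive integer matrix of determinant $\pm 1$ need not be minus a permutation matrix; one genuinely needs the dual sign-coherence of the $G$-matrices (equivalently, that $G = (C^{-1})^{\mathsf T}$ is again sign-coherent). Granting that, the argument is short: nonpositivity of $C$ forces $C^{-1}$, hence $G$, to be nonpositive, and then the elementary fact that a nonnegative integer matrix whose inverse is also a nonnegative integer matrix must be a permutation matrix applies to $-C$. An alternative that avoids the matrix identities is to show directly that any $R \in \mut(\hat Q)$ with every mutable vertex red is isomorphic, as an ice quiver, to the coframed quiver $\check Q$ (each frozen arrow forced into the form $\sigma(i)' \to i$), whose mutable part is $Q$; but proving this all-red ice quiver is unique up to isomorphism rests on the same structural input about $c$-vectors.
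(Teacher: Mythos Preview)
The paper does not supply a proof of this lemma; it simply quotes it as \cite[Proposition~2.10]{brustle}. Your argument is correct and is a standard route to the result: pass to the framed quiver, observe that an all-red endpoint forces the $C$-matrix to equal $-P$ for a permutation matrix $P$, and then invoke the Nakanishi--Zelevinsky identity $B_t = C_t^{\mathsf T} B_0\, C_t$ to conclude that $\mu_{\mgsi}(Q)$ differs from $Q$ only by a relabelling of vertices.

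The one sentence worth sharpening is ``nonpositivity of $C$ forces $C^{-1}$, hence $G$, to be nonpositive'': read literally this fails for general nonpositive unimodular matrices, and what you are really using is that $G$ is (row) sign-coherent together with $CG^{\mathsf T}=I$, so that the diagonal equations $\sum_j C_{ij}G_{ij}=1$ with $C_{ij}\le 0$ force each row of $G$ into the nonpositive orthant. With that phrasing adjusted the proof is complete, and your alternative formulation via uniqueness of the all-red ice quiver (namely the coframed quiver $\check Q$) is equally valid and closer in spirit to the argument in the cited reference.
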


Therefore there is some permutation $\sigma$ on the vertices of the quiver which
maps $\mu_{\mgsi} \left( Q \right)$ to $Q$. This permutation is referred to as
the permutation induced by $\mgsi$.

\subsection{Existence of maximal green sequences}
A number of results have been proved determining when quivers can
admit a maximal green sequence.

\begin{lemma}\cite[Proposition 2.5]{brustle}\label{lem:opposite_quiver}
	A quiver $Q$ has a maximal green sequence if and only if its opposite quiver
	$Q^\text{op}$ has a maximal green sequence.
\end{lemma}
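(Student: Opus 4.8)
The plan is to prove only the implication ``if $Q$ has a maximal green sequence, then so does $Q^{\mathrm{op}}$''; the converse follows by applying this to $Q^{\mathrm{op}}$, since $(Q^{\mathrm{op}})^{\mathrm{op}} = Q$. The argument is driven by the operation $R \mapsto \overline{R}$ on ice quivers that reverses every arrow, for which two facts are immediate from the definitions. First, $\,\overline{\cdot}\,$ commutes with mutation: in Definition~\ref{def:mutation} a $2$-path $i \to k \to j$ becomes $j \to k \to i$ and the arrow $i \to j$ added in step~(1) becomes exactly the arrow $j \to i$ that step~(1) prescribes for $\overline{R}$, while steps~(2) and~(3) are visibly symmetric under reversing all arrows, so $\overline{\mu_k(R)} = \mu_k(\overline{R})$. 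Second, by Definition~\ref{def:green1} a mutable vertex is green in $R$ if and only if it is red in $\overline{R}$. I will also use the elementary fact that mutating at a green vertex makes it red — after step~(2) every arrow joining it to a frozen vertex points inward and step~(1) creates no outgoing arrow from it — together with its image under $\,\overline{\cdot}\,$, that mutating at a red vertex makes it green.

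Let $\mgsi = (i_1, \dotsc, i_l)$ be a maximal green sequence for $Q$, set $R_0 = \widehat{Q}$ and $R_k = \mu_{i_k}(R_{k-1})$, so that every mutable vertex of $R_l$ is red. By Lemma~\ref{lem:iso_mgs} the mutable part of $R_l$ is isomorphic to $Q$ via the induced permutation $\sigma$; moreover sign-coherence (Theorem~\ref{thm:signcoh}, in the strong form of \cite{derksen,gross}) forces the arrows from $Q_0'$ in the all-red seed $R_l$ to form a single perfect matching, necessarily of the form $\sigma(i)' \to \sigma(i)$. Thus $R_l$ is, up to the relabelling given by $\sigma$, the \emph{coframed} quiver of $Q$: the framed quiver $\widehat{Q}$ with all of its frozen arrows reversed. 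Applying $\,\overline{\cdot}\,$ to the chain $R_0, \dotsc, R_l$ and using that it commutes with mutation produces the mutation chain
\[
\overline{R_l} \xrightarrow{\ \mu_{i_l}\ } \overline{R_{l-1}} \xrightarrow{\ \mu_{i_{l-1}}\ } \dotsb \xrightarrow{\ \mu_{i_1}\ } \overline{R_0} .
\]
Here $\overline{R_0} = \overline{\widehat{Q}}$ is the coframed quiver of $Q^{\mathrm{op}}$, all of whose mutable vertices are red, while $\overline{R_l}$ has its frozen arrows pointing $\sigma(i) \to \sigma(i)'$ and is therefore the framed quiver of its own mutable part, which is the opposite of the mutable part of $R_l$ and hence isomorphic to $Q^{\mathrm{op}}$.

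It remains to check that $i_l, i_{l-1}, \dotsc, i_1$ is a green sequence for the quiver $\overline{R_l}$ in the sense of Definition~\ref{def:green1}, i.e.\ that each vertex is green in the seed in which it is mutated. At step $k$ (for $k = l, l-1, \dotsc, 1$) we mutate $\overline{R_k}$ at $i_k$, and $i_k$ is green there precisely when $i_k$ is red in $R_k$; this holds because $i_k$ is green in $R_{k-1}$ and mutating at a green vertex turns it red. Hence $(i_l, \dotsc, i_1)$ is a green sequence for $\overline{R_l}$, and it is maximal because it terminates at $\overline{R_0}$, whose mutable vertices are all red. Transporting this sequence through the relabelling that identifies $\overline{R_l}$ with $\widehat{Q^{\mathrm{op}}}$ — the one governed by $\sigma$ — yields a maximal green sequence for $Q^{\mathrm{op}}$, of the form $(\sigma(i_l), \sigma(i_{l-1}), \dotsc, \sigma(i_1))$ up to replacing $\sigma$ by $\sigma^{-1}$.

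The step I expect to be the main obstacle is the identification in the second paragraph of the all-red terminal seed $R_l$ with the (relabelled) coframed quiver of its mutable part. One really needs the full strength of sign-coherence to know that the frozen arrows of $R_l$ form a single perfect matching $\sigma(i)' \to \sigma(i)$ with the same permutation $\sigma$ as in Lemma~\ref{lem:iso_mgs}; this is what makes $\overline{R_l}$ an honest framed quiver, so that ``green sequence for $\overline{R_l}$'' is even meaningful. Everything else is formal manipulation with the two properties of $\,\overline{\cdot}\,$.
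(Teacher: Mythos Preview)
The paper does not supply its own proof of this lemma: it is quoted as \cite[Proposition~2.5]{brustle} and used as a black box, so there is nothing in the paper to compare your argument against. That said, your proof is correct and is essentially the standard argument from \cite{brustle}: pass to the opposite ice quiver, use that arrow-reversal commutes with mutation and swaps red with green, and read the original sequence backwards from the all-red terminal seed.

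The only place where your write-up is slightly loose is the identification of $R_l$ with the relabelled coframed quiver. What you need is not merely sign-coherence (Theorem~\ref{thm:signcoh}) but the stronger statement that the $c$-matrix at an all-red seed is the negative of a permutation matrix; equivalently, that the ice quiver $R_l$ is isomorphic to $\check{Q}$. This is exactly the content of \cite[Proposition~2.10]{brustle}, of which Lemma~\ref{lem:iso_mgs} in the present paper records only the mutable part. You correctly flag this as the crux, and once it is granted the rest of your argument (that mutating at a green vertex makes it red, hence $(i_l,\dots,i_1)$ is green for $\overline{R_l}$ and terminates at the all-red $\overline{R_0}$) goes through cleanly. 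The precise form of the resulting sequence for $Q^{\mathrm{op}}$---whether one applies $\sigma$ or $\sigma^{-1}$---depends on the convention for the induced permutation, but this does not affect the existence statement.
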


We call a quiver $Q$ \hdef{acyclic} if there are no oriented cycles in $Q$.
Any such quiver contains at least one source vertex, which is not the target of
any arrows.

\begin{theorem}\cite[Lemma 2.20]{brustle}\label{thm:acyclic}
If a quiver is acyclic, then it has a maximal green sequence.
In particular, a maximal green sequence can always be found by repeatedly mutating at sources.
\end{theorem}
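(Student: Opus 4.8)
The plan is to fix a topological ordering of $Q$ and track an explicit invariant along the mutation sequence, so that the induction reduces to bookkeeping. Since $Q$ is finite and acyclic, one may label $Q_0=\{v_1,\dots,v_n\}$ so that every arrow of $Q$ has the form $v_i\to v_j$ with $i<j$; then $v_1$ is a source of $Q$, $v_2$ is a source of $Q[\{v_2,\dots,v_n\}]$, and so on. I claim that $\mgsi=(v_1,\dots,v_n)$ is a maximal green sequence and that each of its mutations is performed at a source of the current quiver.

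To run the induction, write $R_k=\mu_{v_k}\circ\dots\circ\mu_{v_1}(\hat{Q})$ for $0\le k\le n$ and maintain the following four statements about $R_k$: (i) the induced subquiver $R_k[\{v_{k+1},\dots,v_n\}]$ coincides with $Q[\{v_{k+1},\dots,v_n\}]$; (ii) for $k<j\le n$ the only frozen vertex adjacent to $v_j$ in $R_k$ is $v_j'$, joined by the single arrow $v_j\to v_j'$ (so $v_j$ is green); (iii) for $1\le i\le k$ the only frozen vertex adjacent to $v_i$ in $R_k$ is $v_i'$, joined by the single arrow $v_i'\to v_i$ (so $v_i$ is red); (iv) whenever $i\le k<j$ and $v_i,v_j$ are adjacent in $R_k$, the arrow between them is $v_j\to v_i$. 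For $k=0$ all four are immediate from the definition of $\hat{Q}$ and the chosen ordering.

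For the inductive step the key point is that (i), (ii) and (iv) together force $v_{k+1}$ to be a source of $R_k$ with no arrows into it even from frozen vertices: it receives no arrow from $\{v_{k+2},\dots,v_n\}$ by (i) and the topological ordering, none from $\{v_1,\dots,v_k\}$ by (iv), and none from the frozen vertices by (ii). Hence $\mu_{v_{k+1}}$ creates no $2$-paths whatsoever: it merely reverses the arrows incident to $v_{k+1}$ and changes nothing else. Given this, re-establishing (i)--(iv) for $R_{k+1}$ is routine: (i) holds since the arrows inside $\{v_{k+2},\dots,v_n\}$ are untouched; (ii) and (iii) hold since no new frozen arrow is ever created and the only frozen arrow moved is $v_{k+1}\to v_{k+1}'$, flipped to $v_{k+1}'\to v_{k+1}$; and (iv) holds since the arrows between $v_{k+1}$ and the later vertices are reversed from $v_{k+1}\to v_j$ to $v_j\to v_{k+1}$ while all other processed/unprocessed arrows stay in place. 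With the invariant in hand, clause (ii) says $v_{k+1}$ is green in $R_k$ for every $k$, so $\mgsi$ is a green sequence; clause (iii) at $k=n$ says every mutable vertex of $R_n$ is red, so $\mgsi$ is maximal; and the source observation shows the whole sequence consists of source mutations.

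I expect the only genuine obstacle to be the combination of clause (iv) with the attendant ``$v_{k+1}$ has no incoming arrows'' claim: these are precisely what prevents a source mutation of a framed quiver from spawning a fresh arrow into the frozen subquiver, which would otherwise destroy the redness of an already-processed vertex (via a $2$-path $v_i\to v_{k+1}\to v_{k+1}'$) or the greenness of a later one. Once that is pinned down, everything else is formal. As a byproduct, since every mutation is just an arrow-reversal at a source, each arrow $v_i\to v_j$ of $Q$ is reversed exactly twice (at steps $i$ and $j$), so $R_n$ is $\hat{Q}$ with all of the arrows $v_i\to v_i'$ reversed; in particular the permutation induced by $\mgsi$ is the identity, recovering Lemma~\ref{lem:iso_mgs} in this case, though this is not needed for the statement.
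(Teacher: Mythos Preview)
The paper does not provide its own proof of this statement: it is quoted from \cite[Lemma~2.20]{brustle} and simply cited without argument. So there is nothing in the paper to compare against.

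That said, your proof is correct and self-contained. The invariant (i)--(iv) is exactly what is needed, and the crucial observation that $v_{k+1}$ is a genuine source of $R_k$ (no incoming arrows from mutable \emph{or} frozen vertices) is what makes the mutation a pure arrow-reversal, so that no new arrows are introduced and the bookkeeping goes through cleanly. The final remark that each arrow of $Q$ is reversed exactly twice, giving trivial induced permutation, is also correct and a nice bonus. This is essentially the standard elementary argument for this fact; the original reference phrases it more briefly (admissible numbering, mutate in order), but the content is the same.
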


In~\cite{muller}, Muller uses the relationship between maximal green sequences
and paths in scattering diagrams to study green sequences of induced subquivers.

\begin{theorem}{\cite[Theorem 1.4.1]{muller}}\label{thm:subquiver}
If $Q$ admits a maximal green sequence, then any induced subquiver has a maximal green sequence. 
\end{theorem}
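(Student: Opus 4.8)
The plan is to follow Muller and argue through cluster scattering diagrams. First I would set up the dictionary: to a quiver $Q$ on $n$ vertices, with exchange matrix $B$, one associates its cluster scattering diagram $\mathfrak{D}_Q$ in $\mathbb{R}^n$ in the sense of Gross--Hacking--Keel--Kontsevich -- a locally finite collection of codimension-one rational cones (walls), each carrying a wall-crossing automorphism, whose chambers include the positive orthant $\mathcal{C}^+$ and the negative orthant $\mathcal{C}^- = -\mathcal{C}^+$. The translation I would use is that a maximal green sequence for $Q$ is exactly a \emph{finite green path} for $\mathfrak{D}_Q$: a generic smooth path from the interior of $\mathcal{C}^+$ to the interior of $\mathcal{C}^-$ crossing only finitely many walls, each in the ``green'' direction. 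The walls crossed, read through their crossing automorphisms, recover the mutation sequence, and the path ending in $-\mathcal{C}^+$ is the scattering-diagram shadow of Lemma~\ref{lem:iso_mgs}.

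The key step -- and the one I expect to be the main obstacle -- is a restriction statement comparing $\mathfrak{D}_Q$ with $\mathfrak{D}_{Q'}$ for an induced subquiver $Q' = Q[J]$ with $J \subsetneq Q_0$. Since the exchange matrix of $Q'$ is the principal submatrix of $B$ on the indices in $J$, there should be a restriction operation on cluster scattering diagrams, induced by the lattice map relating $\mathbb{R}^n$ and $\mathbb{R}^J$, under which $\mathfrak{D}_{Q'}$ is recovered from $\mathfrak{D}_Q$: each wall of $\mathfrak{D}_{Q'}$ is cut out, with a correspondingly specialised crossing automorphism, by walls of $\mathfrak{D}_Q$, and the distinguished orthants match up. Proving this is the heart of the matter: one must match the combinatorial operation ``delete a vertex of $Q$'' with the algebraic operation on the scattering functions, which calls for the uniqueness of consistent scattering diagrams together with a careful treatment of the non-generic walls that contain, or are contained in, the distinguished coordinate subspace. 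I would stress that $Q'$ can itself be mutation-infinite while $Q$ has a maximal green sequence -- acyclic quivers of infinite mutation type have maximal green sequences by Theorem~\ref{thm:acyclic} -- so $\mathfrak{D}_{Q'}$ may be genuinely infinite and the finiteness in the conclusion is not automatic.

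Granting the restriction statement, the finish is short. I would take a finite green path $\gamma$ for $\mathfrak{D}_Q$ and transport it through the restriction map to a path $\gamma'$ joining the positive and negative orthants of $\mathfrak{D}_{Q'}$. The one delicate point is to check that $\gamma'$ can be taken generic and crosses only finitely many walls of $\mathfrak{D}_{Q'}$; I would do this by choosing $\gamma'$ as a sufficiently close approximation to the image of $\gamma$, so that, using local finiteness of $\mathfrak{D}_Q$ in a large ball, it meets only walls arising from the finitely many walls met by $\gamma$, and by noting that each crossing inherits its green direction from $\gamma$. Then $\gamma'$ is a finite green path, so $Q'$ has a maximal green sequence by the first step; deleting vertices one at a time reduces an arbitrary induced subquiver to this case. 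An alternative would be the representation-theoretic route, identifying maximal green sequences with finite maximal chains of functorially finite torsion classes and using that the module category of $Q[J]$ is a Serre subcategory of that of $Q$, but transferring finiteness is no easier there, so I would prefer the scattering-diagram argument.
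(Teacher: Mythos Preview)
The paper does not give its own proof of this statement: it simply records the theorem as a citation to Muller~\cite[Theorem~1.4.1]{muller}, with the preceding sentence noting that Muller ``uses the relationship between maximal green sequences and paths in scattering diagrams to study green sequences of induced subquivers.'' There is therefore no in-paper argument to compare against.

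Your sketch is exactly the scattering-diagram route the paper is alluding to, and the overall architecture---identify maximal green sequences with finite green paths from $\mathcal{C}^+$ to $\mathcal{C}^-$ in the GHKK cluster scattering diagram, prove a restriction/slicing lemma relating $\mathfrak{D}_Q$ to $\mathfrak{D}_{Q[J]}$, and then push a finite green path through the restriction---matches Muller's. You are also right that the genuine content sits in the restriction lemma and in controlling the wall-crossings of the transported path; the rest is bookkeeping. One small caution: in Muller's treatment the ``restriction'' is realised by slicing $\mathfrak{D}_Q$ along the coordinate subspace $\mathbb{R}^J$ (intersecting walls with that subspace and specialising the attached functions), rather than by a lattice projection, and the finiteness of wall-crossings for the sliced path comes directly from the fact that each wall of the slice is contained in a wall of $\mathfrak{D}_Q$ meeting $\mathbb{R}^J$, so the original finite count bounds the new one---you do not need a separate genericity perturbation beyond moving $\gamma$ into $\mathbb{R}^J$ and wiggling it off the non-generic locus there. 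With that adjustment your outline is a faithful summary of the cited proof.
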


Using this one can prove a quiver does not admit a maximal green sequence by
finding an induced subquiver which does not admit a maximal green sequence.
However there are examples of quivers which do not have a maximal green sequence
whereas every induced subquiver does.

At the same time, Muller shows that the existence of maximal green sequences is
not an invariant under mutation~\cite[Section 2]{muller}, so one quiver in a
mutation class could admit a maximal green sequence while another might not.
The second author shows in~\cite{mills} that in the case of mutation-finite quivers this
does not occur, and in fact only very specific mutation classes of
quivers do not admit maximal green sequences.

\begin{theorem}\cite[Theorem 3]{mills}\label{thm:mu_finite_mgs}
Let $Q$ be a mutation-finite quiver. $Q$ has no maximal green sequence if and only if $Q$ arises from a triangulation of a once-punctured closed surface or is one of the two quivers in the mutation class of $\mathbb{X}_7$.
\end{theorem}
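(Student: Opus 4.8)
The plan is to reduce to connected quivers — a quiver has a maximal green sequence if and only if each of its connected components does, and the components of a mutation-finite quiver are again mutation-finite — then to invoke the Felikson--Shapiro--Tumarkin classification of connected mutation-finite quivers, and finally to analyse each class, producing a maximal green sequence in every case except the two asserted families.

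Recall that a connected mutation-finite quiver has rank at most $2$, or arises from an ideal triangulation of a marked bordered surface $(S,M)$, or is mutation-equivalent to one of $E_6,E_7,E_8,\widetilde{E}_6,\widetilde{E}_7,\widetilde{E}_8,E_6^{(1,1)},E_7^{(1,1)},E_8^{(1,1)},X_6,X_7$. Rank-$\le 2$ quivers are acyclic, so Theorem~\ref{thm:acyclic} applies. Of the exceptionals, all except $X_7$ admit a maximal green sequence: the Dynkin types $E_6,E_7,E_8$ and the affine types have acyclic representatives (Theorem~\ref{thm:acyclic}), while the elliptic types $E_n^{(1,1)}$ together with $X_6$ are dispatched by exhibiting one explicit maximal green sequence each, a finite verification since their mutation classes are finite. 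The mutation class of $X_7$ consists of just two quivers; one checks by direct enumeration that neither admits a maximal green sequence — tracking $c$-vectors, every green sequence out of $\widehat{X_7}$ is forced into a periodic pattern that never reaches the all-red configuration, and in fact $X_7$ has no reddening (green-to-red) sequence either.

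For the surface case, write $Q=Q(T)$ for an ideal triangulation $T$ of $(S,M)$. If $\partial S\ne\emptyset$, construct a maximal green sequence by induction on the number of triangles, peeling off one boundary triangle (equivalently, removing an "ear" arc) at a time: choose the arc so that its vertex is green in the current framed quiver, mutate there, then apply the inductive hypothesis to the smaller surface quiver, using the structural results above (Theorems~\ref{thm:acyclic} and~\ref{thm:subquiver}, Lemma~\ref{lem:opposite_quiver}) for the bookkeeping. One cannot simply pass to an acyclic representative here, since surface quivers are generally not of acyclic mutation type. If $S$ is closed with at least two punctures, first cut along an arc joining two distinct punctures to open up a boundary component, reduce to the bordered case, and reassemble. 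The remaining case is $S$ closed with a single puncture — the smallest instance being the once-punctured torus, whose quiver is the Markov quiver — and here one must prove that \emph{no} quiver in the entire mutation class has a maximal green sequence. Following Ladkani, prove the stronger statement that such quivers admit no reddening sequence; since the existence of a reddening sequence is mutation-invariant (whereas the existence of a maximal green sequence is not), it then suffices to establish this for a single representative, which Ladkani does via the quiver with potential and Ginzburg dg algebra of the surface — the obstruction coming from its infinite-dimensional Jacobian algebra and the attendant representation theory — rather than by any finite computation.

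The main obstacle is precisely this once-punctured closed surface family. Every other case only asks for \emph{one} good representative through which a green process can be pushed, whereas here one needs a negative statement holding uniformly over an infinite mutation class, and the only known route leaves elementary quiver combinatorics for the additive categorification (quivers with potential, Ginzburg dg algebras, or cluster scattering diagrams). A secondary obstacle is the breadth of the positive surface case: the peeling induction must be arranged so that at each stage the removed vertex is genuinely green and the residual quiver is again a surface quiver (or a controlled modification of one), which requires care with the geometry of ideal triangulations, in particular with self-folded triangles and once-punctured monogons.
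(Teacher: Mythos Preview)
The paper does not prove this statement: Theorem~\ref{thm:mu_finite_mgs} is quoted verbatim from \cite[Theorem 3]{mills} and is used as a black box (see Corollary~\ref{cor:mgs_subquivers} and Theorem~\ref{thm:rank3_eg}). There is therefore no ``paper's own proof'' to compare your proposal against.

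That said, your outline is broadly faithful to the strategy of the original reference: reduce to connected quivers, invoke the Felikson--Shapiro--Tumarkin classification, dispatch the acyclic-type and exceptional classes by exhibiting explicit green sequences (or by acyclicity), handle surfaces with boundary or with at least two punctures constructively, and appeal to Ladkani's non-existence of reddening sequences for once-punctured closed surfaces. Two cautions. First, your surface-with-boundary ``peeling'' sketch is vague exactly where it needs to be precise: controlling the colour of the vertex you remove and ensuring the residual object is still a bona fide surface quiver (self-folded triangles and punctured monogons are the usual traps) is not automatic, and the actual proofs in the literature organise this differently rather than by naive ear-removal. Second, the claim that $X_7$ is handled ``by direct enumeration'' understates the argument---green sequences from $\widehat{X_7}$ are a priori of unbounded length, so one needs a finiteness or periodicity argument (or the stronger no-reddening statement) before any enumeration terminates. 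If you intend to write this up rather than cite it, those two points are where the real work lies.
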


In~\cite{brustle2}, Br\"ustle, Hermes, Igusa and Todorov build a series of
results showing how c-vectors and c-matrices change as mutations are applied
along a maximal green sequence. These results allow the authors to construct a
maximal green sequence for any quiver which appears as an intermediary of the
initial maximal green sequence.

\begin{theorem}[{Rotation Lemma~\cite[Theorem 3]{brustle2}}]\label{thm:rotationlem}
	If $\mgsi = \left( i_1, i_2, \dotsc, i_\ell \right)$ is a maximal green
	sequence of a quiver $Q$ with induced permutation $\sigma$, then the sequence
	$\left( i_2, \dotsc, i_\ell, \sigma^{-1}(i_1) \right)$ is a maximal green
	sequence for the quiver $\mu_{i_1}(Q)$ with the same induced permutation
	$\sigma$.
\end{theorem}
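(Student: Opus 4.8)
The plan is to argue entirely with $c$-vectors and to lean on sign-coherence (Theorem~\ref{thm:signcoh}). Write $R_0=\hat{Q}$ and $R_k=\mu_{i_k}\circ\cdots\circ\mu_{i_1}(\hat{Q})$, and let $C_k$ be the $c$-matrix of $R_k$ (its rows are the $c$-vectors of the mutable vertices), so $C_0=I$; it is part of the basic theory of maximal green sequences that $C_\ell=-P_\sigma$ for the permutation matrix $P_\sigma$ of the induced permutation $\sigma$, and this is the source of $\sigma$ in Lemma~\ref{lem:iso_mgs}. On the other side set $Q'=\mu_{i_1}(Q)$, let $R_0'=\hat{Q'}$, $R_t'=\mu_{i_{t+1}}(R_{t-1}')$ for $1\le t\le\ell-1$ (so the new sequence mutates $i_2,\dots,i_\ell$ in order, reaching $R_{\ell-1}'$, before the final $\mu_{\sigma^{-1}(i_1)}$), and write $C_t'$ for the $c$-matrix of $R_t'$, so $C_0'=I$. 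The heart of the argument is the identity
$$C_{t+1}=C_t'\,C_1\qquad(0\le t\le\ell-1),$$
where $C_1$ is the elementary matrix implementing mutation at $i_1$ from the all-green $c$-matrix $I$; it differs from $I$ only in its $i_1$-st column and satisfies $C_1^2=I$. Granting this identity the lemma follows formally: the $i_{t+1}$-st row of $C_{t+1}$ is $\ge 0$ because $i_{t+1}$ is green in $R_t$, hence the same row of $C_t'=C_{t+1}C_1$ governs a green mutation, so $(i_2,\dots,i_\ell)$ is a green sequence for $\hat{Q'}$; at $t=\ell-1$ this gives $C_{\ell-1}'=C_\ell C_1=-P_\sigma C_1$, and a short computation shows the $\sigma^{-1}(i_1)$-st row of $-P_\sigma C_1$ equals $e_{i_1}$, so $\sigma^{-1}(i_1)$ is green in $R_{\ell-1}'$; one further computation gives that $\mu_{\sigma^{-1}(i_1)}(R_{\ell-1}')$ has $c$-matrix $-P_\sigma$, i.e.\ every vertex is red and the induced permutation is again $\sigma$. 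That the underlying quiver returns to $Q'$ is then Lemma~\ref{lem:iso_mgs} (or the parallel bookkeeping on mutable parts).

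The identity $C_{t+1}=C_t'C_1$ itself rests on two points. First, the mutable part of a framed quiver mutates exactly like the bare quiver, so the exchange matrix of the mutable part of $R_t'$ agrees with that of $R_{t+1}$ for all $t$; this is automatic. Second, one step of $c$-matrix mutation at a vertex $k$ is left multiplication by an elementary matrix depending only on the exchange matrix of the mutable part and on the sign ($\ge 0$ or $\le 0$) of the $k$-th $c$-vector. Combining these and inducting, $C_{t+1}=C_t'C_1$ holds as long as, at each step, the vertex being mutated has the \emph{same} colour in the two families. This colour-matching is the main obstacle. Sign-coherence (Theorem~\ref{thm:signcoh}) forces the $i_{t+1}$-st row of $C_t'=C_{t+1}C_1$ to be $\ge 0$ or $\le 0$, and right multiplication by $C_1$ alters only the $i_1$-coordinate; a nonnegative row therefore stays nonnegative unless it was already a positive multiple of $e_{i_1}$ — that is, unless some mutated $c$-vector of the original sequence is a positive multiple of its very first mutated $c$-vector $e_{i_1}$. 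Ruling this out is the delicate point: it requires genuine information about the $c$-vectors occurring along a maximal green sequence (for instance that mutated $c$-vectors are pairwise non-proportional, which is more than the distinctness that disposes only of the primitive case).

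For this reason the cleanest route, and the one taken in~\cite{brustle2}, replaces the linear-algebra bookkeeping by geometry. One records a maximal green sequence as a generic path in the $c$-vector (semi-invariant / scattering) picture running from the totally green region to the totally red region, the two regions being interchanged by the central symmetry that also accounts for $\sigma$. In this language the rotated sequence $(i_2,\dots,i_\ell,\sigma^{-1}(i_1))$ is simply the path obtained by starting one wall-crossing later and closing it up through that symmetry, so it is manifestly again such a path, hence a maximal green sequence, with unchanged induced permutation. The price is importing the machinery of $c$-vector fans, but it dispenses entirely with the degenerate case above.
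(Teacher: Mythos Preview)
The paper does not prove the Rotation Lemma: it is quoted as \cite[Theorem~3]{brustle2} and used as a black box (the only nearby argument is the derivation of the Reverse Rotation Lemma \emph{from} it). There is therefore no proof in the paper to compare your attempt against.

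Taken on its own, your proposal is not a complete proof. The $c$-matrix framework and the identity $C_{t+1}=C_t'\,C_1$ are the right scaffolding, but two things go wrong. First, there is an indexing slip in the paragraph that derives greenness: ``the $i_{t+1}$-st row of $C_{t+1}$ is $\ge 0$ because $i_{t+1}$ is green in $R_t$'' is false---greenness in $R_t$ controls row $i_{t+1}$ of $C_t$, and after mutating at $i_{t+1}$ that row becomes $\le 0$ in $C_{t+1}$. The bookkeeping can be repaired (shift the index by one throughout), but as written it does not parse. Second, and more importantly, you explicitly leave the decisive step open: you correctly isolate the degenerate case in which some later green $c$-vector is a positive multiple of $e_{i_1}$, call it ``the delicate point,'' and then do not dispose of it. Your final paragraph abandons the linear-algebra line entirely and gestures at the semi-invariant picture of~\cite{brustle2}; that is a pointer to someone else's proof, not a proof. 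If you want a self-contained argument along your first route, you must actually close the gap---for instance by establishing (or citing precisely) that the green $c$-vectors mutated along a maximal green sequence are pairwise non-proportional, so that $e_{i_1}$ cannot recur after the first step.
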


A maximal green sequence is a cycle in the quiver exchange graph, and the
rotation lemma shows that this cycle always gives a maximal green sequence for
any quiver appearing in that cycle. The statement above rotates in only one
direction, however this direction can be reversed:

\begin{theorem}[{Reverse Rotation Lemma}]\label{thm:reverserotationlem}
	If $\mgsi = \left( i_1, i_2, \dotsc, i_{\ell-1}, i_\ell \right)$ is a maximal green
	sequence of a quiver $Q$ with induced permutation $\sigma$, then the sequence
	$\left( \sigma(i_\ell), i_1, i_2, \dotsc, i_{\ell-1} \right)$ is a maximal green
	sequence for the quiver $\mu_{\sigma(i_\ell)}(Q)$ with the same induced
	permutation $\sigma$.
\end{theorem}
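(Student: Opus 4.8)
The plan is to deduce the Reverse Rotation Lemma from the forward Rotation Lemma (Theorem~\ref{thm:rotationlem}) by iterating it $\ell-1$ times and then transporting the resulting maximal green sequence back along the isomorphism provided by Lemma~\ref{lem:iso_mgs}. The only auxiliary input is the elementary observation that quiver isomorphisms are compatible with mutation and with the red/green colouring: if $\phi\colon Q\xrightarrow{\ \sim\ }Q''$ is an isomorphism and $(j_1,\dots,j_m)$ is a maximal green sequence of $Q$ with induced permutation $\tau$, then $(\phi(j_1),\dots,\phi(j_m))$ is a maximal green sequence of $Q''$ with induced permutation $\phi\tau\phi^{-1}$ (apply $\phi$, extended to the framed quivers, at each step), and moreover $\mu_k(\phi(Q))=\phi(\mu_{\phi^{-1}(k)}(Q))$ for every vertex $k$.

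First I would prove by induction on $k$ that, starting from $\mgsi=(i_1,\dots,i_\ell)$ on $Q$ with induced permutation $\sigma$, for each $0\le k\le\ell$ the sequence
\[
\mgsi^{(k)}=\bigl(i_{k+1},\dots,i_\ell,\ \sigma^{-1}(i_1),\dots,\sigma^{-1}(i_k)\bigr)
\]
is a maximal green sequence for $Q^{(k)}=\mu_{i_k}\circ\cdots\circ\mu_{i_1}(Q)$ with induced permutation $\sigma$; the base case $k=0$ is the hypothesis, and the inductive step is a single application of Theorem~\ref{thm:rotationlem} to $\mgsi^{(k)}$, whose first entry is $i_{k+1}$. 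Taking $k=\ell-1$ gives that $\mgsi^{(\ell-1)}=(i_\ell,\sigma^{-1}(i_1),\dots,\sigma^{-1}(i_{\ell-1}))$ is a maximal green sequence for $Q^{(\ell-1)}=\mu_{i_{\ell-1}}\circ\cdots\circ\mu_{i_1}(Q)$ with induced permutation $\sigma$.

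Next I would identify $Q^{(\ell-1)}$. Since $\mu_{i_\ell}(Q^{(\ell-1)})=\mu_{\mgsi}(Q)$, and since by Lemma~\ref{lem:iso_mgs} together with the definition of the induced permutation the map $\sigma$ takes $\mu_{\mgsi}(Q)$ to $Q$ — that is, $\mu_{\mgsi}(Q)=\sigma^{-1}(Q)$ — we obtain
\[
Q^{(\ell-1)}=\mu_{i_\ell}\bigl(\sigma^{-1}(Q)\bigr)=\sigma^{-1}\bigl(\mu_{\sigma(i_\ell)}(Q)\bigr),
\]
where the last equality is $\mu_k(\phi(Q))=\phi(\mu_{\phi^{-1}(k)}(Q))$ with $\phi=\sigma^{-1}$ and $k=i_\ell$. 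Finally I would push $\mgsi^{(\ell-1)}$ forward along the isomorphism $\sigma\colon\sigma^{-1}(\mu_{\sigma(i_\ell)}(Q))\xrightarrow{\ \sim\ }\mu_{\sigma(i_\ell)}(Q)$: by the observation above, $(\sigma(i_\ell),i_1,\dots,i_{\ell-1})$ is a maximal green sequence of $\mu_{\sigma(i_\ell)}(Q)$ with induced permutation $\sigma\sigma\sigma^{-1}=\sigma$, which is exactly the assertion.

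I do not expect a genuine obstacle: the Rotation Lemma carries all the weight, and the rest is bookkeeping of which vertex is being mutated and of the conjugation of the induced permutation. The one point requiring care is the directional convention for the induced permutation — whether $\sigma$ sends $\mu_{\mgsi}(Q)$ to $Q$ or the reverse — since the opposite convention would make $\mu_{\sigma^{-1}(i_\ell)}$ appear in place of $\mu_{\sigma(i_\ell)}$; the computation above uses the convention fixed just after Lemma~\ref{lem:iso_mgs}.
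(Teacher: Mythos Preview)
Your proof is correct and follows essentially the same route as the paper: apply the Rotation Lemma $\ell-1$ times to obtain $(i_\ell,\sigma^{-1}(i_1),\dots,\sigma^{-1}(i_{\ell-1}))$ as a maximal green sequence for $\mu_{i_{\ell-1}}\cdots\mu_{i_1}(Q)$, identify this quiver with $\sigma^{-1}(\mu_{\sigma(i_\ell)}(Q))$ via $\mu_{\mgsi}(Q)=\sigma^{-1}(Q)$, and then transport along $\sigma$. Your write-up is slightly more explicit than the paper's (the induction is spelled out, and you verify that the induced permutation is preserved via the conjugation $\sigma\sigma\sigma^{-1}=\sigma$), but the argument is the same.
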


\begin{proof}
	Applying the rotation lemma $\ell-1$ times gives that $\big( i_\ell,
	\sigma^{-1}(i_1), \dotsc, \sigma^{-1}(i_{\ell-1}) \big)$ is a maximal green
	sequence for $\mu_{i_{\ell-1}} \dotsb \mu_{i_2} \mu_{i_1} (Q)$. As $\sigma$ is
	the induced permutation for $\mgsi$, we have $\sigma\big(\mu_{\mgsi}(Q)\big) =
	Q$, so $\mu_{\mgsi}(Q) = \mu_{i_\ell} \mu_{i_{\ell-1}} \dotsb \mu_{i_1}(Q) =
	\sigma^{-1}(Q)$ and therefore
	\[ \mu_{i_{\ell-1}} \dotsb \mu_{i_1}(Q) = \mu_{i_\ell}\left( \sigma^{-1}(Q)
	\right) = \sigma^{-1}\left( \mu_{\sigma(i_\ell)} (Q) \right). \]
	Hence $\big( i_\ell, \sigma^{-1}(i_1), \dotsc, \sigma^{-1}(i_{\ell-1}) \big)$
	is a maximal green sequence for $\sigma^{-1}\left( \mu_{\sigma(i_\ell)} (Q)
	\right)$ and by applying $\sigma$ to both the sequence of vertices and to the
	quiver, we get that $\big( \sigma(i_\ell), i_1, i_2, \dotsc, i_{\ell-1} \big)$
	is a maximal green sequence for $\mu_{\sigma(i_\ell)}(Q)$.
\end{proof}

\subsection{Direct sums of quivers}
Garver and Musiker showed in~\cite{garver} that if a quiver $Q$ can be written as a direct sum
of quivers, where each summand has a maximal green sequence, then $Q$ has a maximal green
sequence itself. Throughout this subsection we assume that $(Q,F)$ and $(Q',F')$ are finite ice quivers with vertices labelled $Q_0\setminus F=\lbrace 1,\dots,N_1 \rbrace$ and $Q'_0 \setminus F' = \lbrace N_1+1,\dots,N_1+N_2\rbrace$. 

\begin{definition}\cite[Definition 3.1]{garver}
Let $(a_1,\dots,a_k)$ denote a $k$-tuple of elements from $Q_0\setminus F$ and $(b_1,\dots,b_k)$ denote a $k$-tuple of elements from $Q'_0\setminus F'$. Then for any ice quivers $(R,F) \in \mut((Q,F))$ and $(R',F') \in \mut((Q',F'))$ we define the \hdef{direct sum} of $(R,F)$ and $(R',F')$, denoted $(R,F) \oplus_{(a_1,\dots,a_k)}^{(b_1,\dots,b_k)}(R',F')$, to be the ice quiver with vertices
$$\big((R,F) \oplus_{(a_1,\dots,a_k)}^{(b_1,\dots,b_k)}(R',F')\big)_0= R_0 \sqcup R'_0$$
and edges
$$\big((R,F) \oplus_{(a_1,\dots,a_k)}^{(b_1,\dots,b_k)}(R',F')\big)_1= (R,F)_1 \sqcup (R',F')_1 \sqcup \{ a_i \rightarrow b_i | i = 1,\dots, k\}.$$
We say that $(R,F) \oplus_{(a_1,\dots,a_k)}^{(b_1,\dots,b_k)}(R',F')$ is a \hdef{$t$-colored direct sum} if $t= \#\{$ distinct elements of $\{a_1,\dots,a_k\}\}$ and there does not exist $i$ and $j$ such that $$\#\{a_i \rightarrow b_j\} \geq 2.$$
\end{definition}

\begin{theorem}\cite[Theorem 3.12]{garver}\label{thm:directsum-mgs}
Let $Q=Q'\oplus_{(a_1,\dots,a_k)}^{(b_1,\dots,b_k)} Q''$ be a $t$-colored
direct sum of quivers. If $\left( i_1, \dotsc, i_r \right)$ is a maximal green
sequences for $Q'$ and $\left( j_1, \dotsc, j_s \right)$ is a maximal green
sequence for $Q''$, then $\left( i_1, \dotsc, i_r, j_1, \dotsc, j_s \right)$
is a maximal green sequence for $Q$.
\end{theorem}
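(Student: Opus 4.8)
The plan is to work inside the framed quiver $\hat Q$ and to split the candidate sequence $(i_1,\dots,i_r,j_1,\dots,j_s)$ into two phases: \emph{Phase 1}, the first $r$ steps (the maximal green sequence of $Q'$), and \emph{Phase 2}, the last $s$ steps (the maximal green sequence of $Q''$). Write $Q_0 = Q'_0 \sqcup Q''_0$ and split the frozen vertices of $\hat Q$ accordingly into $(Q'_0)'$ and $(Q''_0)'$. Throughout the run I would track three things: the induced subquiver on $Q'_0 \cup (Q'_0)'$ (the ``$Q'$-part''), the induced subquiver on $Q''_0 \cup (Q''_0)'$ (the ``$Q''$-part''), and the crossing arrows between the two mutable blocks; this is equivalent to tracking $c$-vectors, and by sign coherence (Theorem~\ref{thm:signcoh}) each mutable vertex is always red or green. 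The target statements are: (i) Phase 1 acts on the $Q'$-part exactly as the $Q'$-maximal green sequence acts on $\hat{Q'}$, uses only mutations at green vertices of $\hat Q$, leaves the $Q''$-part unchanged, and ends with every vertex of $Q'_0$ red; (ii) Phase 2 then acts on the $Q''$-part exactly as the $Q''$-maximal green sequence acts on $\hat{Q''}$ and keeps every vertex of $Q'_0$ red; hence after all $r+s$ steps every mutable vertex of $Q$ is red, so the concatenation is a maximal green sequence for $Q$, and Lemma~\ref{lem:iso_mgs} confirms the resulting quiver is isomorphic to $Q$.

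For Phase 1 I would argue by induction on the step. Initially the only arrows between the blocks point out of the $a_i$ into the $b_i$, so a mutation at $m \in Q'_0$ affects the $Q'$-part using only data internal to $Q'_0 \cup (Q'_0)'$: the $B$-matrix restricted to $Q'_0$ and the $(Q'_0)'$-components of the $c$-vectors of $Q'_0$-vertices evolve identically to the corresponding run on $\hat{Q'}$, while their $(Q''_0)'$-components stay zero. In particular $m = i_t$ has the same $c$-vector as in $\hat{Q'}$, so it is green in $\hat Q$ whenever it is green in $\hat{Q'}$, and thus every Phase 1 mutation is at a green vertex. Because each such mutation is at a green $m$, mutating at $m = a_i$ merely reverses the crossing arrow $a_i \to b_i$ and spawns crossing arrows $p \to b_i$ for the $Q'$-predecessors $p$ of $m$; the point — and this is where the $t$-colored hypothesis (no two arrows from an $a$-vertex to a $b$-vertex) is used — is that these newly created crossing arrows never accumulate into a $2$-path from a $Q''_0$-vertex through a $Q'_0$-vertex back to a $Q''_0$-vertex, and never produce an incoming crossing arrow at a $Q'_0$-vertex at the moment it is mutated. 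Consequently the arrows inside the $Q''$-part are never altered and no $c$-vector of a $Q''_0$-vertex ever changes, so at the end of Phase 1 the $Q''$-part is literally $\hat{Q''}$ with all of $Q''_0$ still green, every vertex of $Q'_0$ is red (as $(i_1,\dots,i_r)$ is maximal for $Q'$), and every crossing arrow now points from $Q''_0$ into $Q'_0$.

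Phase 2 is the mirror image: from this configuration we mutate only at $Q''_0$-vertices whose crossing arrows now all point inward, so by the same bookkeeping each $j_t$ is green when reached, the $Q''$-part evolves exactly as the $Q''$-maximal green sequence acts on $\hat{Q''}$, and the $Q'_0$-vertices — already red and with no outgoing crossing arrows to $Q''_0$ — retain their incoming frozen arrow and stay red throughout. Since $(j_1,\dots,j_s)$ is maximal for $Q''$, all of $Q''_0$ is red at the end, hence all of $Q_0$ is red and the concatenation is a maximal green sequence for $Q$. The main obstacle is precisely the non-interference claim inside Phase 1: controlling how the freshly created arrows $p \to b_i$ and the reversed arrows $b_i \to a_i$ propagate under further mutations at $Q'_0$-vertices and showing they never disturb either the $Q'$-part's evolution or the internal structure and $c$-vectors of the $Q''$-part. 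The cleanest route is to formulate an explicit invariant maintained through Phase 1 — describing at every stage exactly which crossing arrows are present, in which direction, and with multiplicity one, as a function of the mutation history restricted to the $a$-vertices — and to verify it step by step; the $t$-colored hypothesis is exactly what keeps this invariant both valid and simple. The remaining ingredients (the two appeals to the definition of a maximal green sequence and the final ``all red'' conclusion) are then routine.
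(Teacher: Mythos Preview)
The paper does not prove this theorem; it is quoted from \cite{garver} without argument, so there is no in-paper proof to compare against.

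Your two-phase architecture is correct, and you correctly isolate the crux: at the moment a green vertex $m\in Q'_0$ is mutated during Phase~1, every crossing arrow incident to $m$ must point out of $m$ (so no $2$-path through $m$ has both endpoints in $Q''_0$, the $Q''$-part and the $c$-vectors of $Q''_0$-vertices are untouched, and the $Q'$-part evolves exactly as in $\hat{Q'}$). The gap is in how you propose to establish this. The ``multiplicity one'' invariant you suggest is false, and the $t$-colored hypothesis is not the mechanism. Counterexample: take $Q'=(1\to 2)$, $Q''=\{3\}$, crossing arrows $1\to 3$ and $2\to 3$ (a valid $2$-colored direct sum), and run the maximal green sequence $(2,1,2)$ for $Q'$ inside $\hat Q$; after the very first step $\mu_2$ there are \emph{two} arrows $1\to 3$. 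The non-interference claim nevertheless holds at every step of this example, and its actual source is sign coherence rather than any simplicity of the crossing arrows: freeze $Q''_0$ during Phase~1 and note that each $Q''_0$-row of the extended exchange matrix is initially a non-negative integer combination of the principal rows; since by Theorem~\ref{thm:signcoh} the principal rows share a common sign in the mutated column, so does any non-negative combination of them, and on rows with that common sign the mutation rule is linear --- hence the relation persists through every mutation. It follows that the crossing-arrow entries at any green $m\in Q'_0$ are non-negative, i.e.\ all such arrows point out of $m$. Observe that this uses only that the initial crossing arrows point from $Q'$ to $Q''$, not the $t$-colored bound on their multiplicities; once you have it, your Phase~2 ``mirror'' argument goes through verbatim, because at the end of Phase~1 every crossing arrow now points from $Q''_0$ into $Q'_0$.
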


\subsection{Quivers ending with k-cycles}
Direct sums give a method to construct maximal green sequences by decomposing a
quiver into disjoint induced subquivers. Another approach involves considering
oriented k-cycles appearing in the quiver.

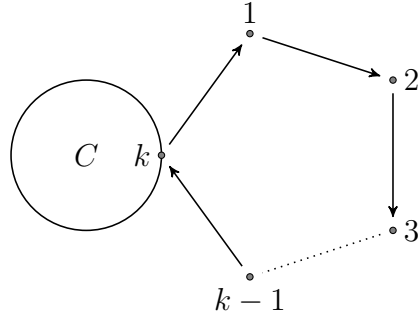
\begin{figure}
\begin{tikzpicture}
	\tkzDefPoint(0,0){k}
	\tkzDefShiftPoint[k](54:2){a}
	\tkzDefShiftPoint[a](-18:2){b}
	\tkzDefShiftPoint[b](-90:2){c}
	\tkzDefShiftPoint[k](-54:2){k1}
	\node (label) at (-1, 0) {$C$};
	\tkzDrawCircle(label,k)
	\tkzDrawPoints(a,b,c,k1,k)
	\tkzLabelPoint[above](a){$1$}
	\tkzLabelPoint[right](b){$2$}
	\tkzLabelPoint[right](c){$3$}
	\tkzLabelPoint[below](k1){$k-1$}
	\tkzLabelPoint[left](k){$k$}
	\tkzDrawSegments[->, shorten <=5, shorten >=5, >=stealth'](k,a a,b b,c k1,k)
	\tkzDrawSegments[dotted, shorten <=5, shorten >=5](c,k1)
\end{tikzpicture}
\caption{The decomposition of a quiver $Q$ into a k-cycle, with vertices
	$1,\dotsc k$, and an induced subquiver $C$, as described in
	Theorem~\ref{thm:triangleend}.}
\label{fig:kcycle}
\end{figure}

\begin{theorem}\label{thm:triangleend}
	Let $Q$ be a quiver containing an oriented k-cycle with vertices labelled $1,
	\dotsc, k$, where each vertex $1, \dotsc, k-1$ is only adjacent to two arrows,
	one from the vertex preceding it in the cycle and one to the next vertex in
	the cycle, as shown in Figure~\ref{fig:kcycle}. Let $C = Q\left[ Q_0 \setminus
	\lbrace 1,\dotsc,k-1 \rbrace \right]$ be the induced subquiver obtained by removing
	the vertices in the k-cycle, excluding vertex $k$. Then $Q$ has a maximal
	green sequence if and only if $C$ has a
	maximal green sequence.

	In particular, if $Q$ contains a 3-cycle in this form, such that if
	$\mgsi_C$ is a maximal green sequence for $C$, then $\left( 2,
	\mgsi_C, 1, 2 \right)$ is a maximal green sequence for $Q$.
\end{theorem}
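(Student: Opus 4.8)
The forward implication needs nothing new: $C=Q[Q_0\setminus\{1,\dots,k-1\}]$ is an induced subquiver of $Q$, so if $Q$ has a maximal green sequence then so does $C$ by Theorem~\ref{thm:subquiver}. All the content is the reverse implication together with the explicit sequence in the $3$-cycle case.

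For the reverse implication I would build a maximal green sequence of $Q$ from one of $C$ by a ``straighten, import, rotate back'' argument. \emph{Straighten.} Mutating $Q$ successively at the pendant vertices $2,3,\dots,k-1$ collapses the $k$-cycle; I would verify, by induction on $k$ (the only delicate point being the $2$-cycles that get cancelled at each step — this is exactly where $2$-valence of $1,\dots,k-1$ is used), that
\[
\mu_{k-1}\circ\cdots\circ\mu_2(Q)\;=\;C\ \oplus_{(k)}^{(k-1)}\ A,
\]
a $1$-colored direct sum, where $A$ is the linearly oriented type-$A_{k-1}$ quiver $2\to 3\to\cdots\to(k-1)\to 1$ on the pendant vertices, glued to $C$ by the single arrow $k\to(k-1)$. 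For $k=3$ this is just $\mu_2(Q)=C\oplus_{(3)}^{(2)}(2\to 1)$. \emph{Import.} Since $A$ is acyclic it has a maximal green sequence (Theorem~\ref{thm:acyclic}); I would choose one, $\mathbf{j}_A$, whose induced permutation $\tau_A$ has the property that its last $k-2$ entries, read in order, are $\tau_A^{-1}(2),\tau_A^{-1}(3),\dots,\tau_A^{-1}(k-1)$ — for $k=3$ one takes the length-$3$ sequence $(1,2,1)$ of $2\to 1$, whose induced permutation is the transposition $(1\,2)$. By Theorem~\ref{thm:directsum-mgs}, $(\mathbf{i}_C,\mathbf{j}_A)$ is then a maximal green sequence of $C\oplus A$, and from the proof of that theorem its induced permutation $\sigma$ restricts to $\tau_A$ on $\{1,\dots,k-1\}$. \emph{Rotate back.} As $\sigma$ sends the last entry of $(\mathbf{i}_C,\mathbf{j}_A)$ to $k-1$, the Reverse Rotation Lemma (Theorem~\ref{thm:reverserotationlem}) gives a maximal green sequence of $\mu_{k-1}(C\oplus A)$ beginning with $k-1$ and with the same permutation $\sigma$; its last entry is now sent by $\sigma$ to $k-2$, so a further application treats $\mu_{k-2}$, and so on. After $k-2$ applications one lands on a maximal green sequence of $\mu_2\circ\mu_3\circ\cdots\circ\mu_{k-1}(C\oplus A)=Q$. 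In the case $k=3$ this chain has length one: $(\mathbf{i}_C,1,2,1)$ is a maximal green sequence of $\mu_2(Q)$ with $\sigma(1)=2$, and the Reverse Rotation Lemma turns it into $(2,\mathbf{i}_C,1,2)$ for $\mu_2(\mu_2(Q))=Q$, which is precisely the stated sequence.

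The step I expect to be the real obstacle is the bookkeeping of induced permutations at the end: one must (i) pin down the direct-sum decomposition above, including the orientation of $A$ and the gluing arrow, and (ii) produce a maximal green sequence of the type-$A$ quiver $A$ whose induced permutation lines its final $k-2$ mutations up, in the correct order, with the vertices $k-1,k-2,\dots,2$ after transport by the Reverse Rotation Lemma; for $A_2$ this is the familiar long sequence $(1,2,1)$, and for general $A_{k-1}$ such a sequence exists, but selecting it cleanly (or organising the induction on $k$ so that it never has to be written down fully) is where the work sits. One also needs that the induced permutation of the direct-sum sequence restricts to $\tau_A$ on the $A$-part, which is extracted from the proof of Theorem~\ref{thm:directsum-mgs}. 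With those in place, Theorems~\ref{thm:subquiver},~\ref{thm:acyclic},~\ref{thm:directsum-mgs} and~\ref{thm:reverserotationlem} do the remaining transporting mechanically.
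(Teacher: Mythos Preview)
Your approach is essentially the paper's: straighten the pendant cycle to a direct sum with a type-$A_{k-1}$ tail, import a maximal green sequence via Theorem~\ref{thm:directsum-mgs}, then reverse-rotate back to $Q$. The one substantive difference is the order of the straightening mutations: the paper applies $\mu_2\circ\mu_3\circ\cdots\circ\mu_{k-1}$ (i.e.\ mutates $k-1,k-2,\dots,2$), obtaining $C\oplus_{k}^{2}(1\leftarrow 2\to 3\to\cdots\to k-1)$, whereas you apply $\mu_{k-1}\circ\cdots\circ\mu_2$ and obtain $C\oplus_{k}^{k-1}(2\to 3\to\cdots\to(k-1)\to 1)$. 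Both decompositions are correct, but the paper's choice pays off at the step you flag as the obstacle: for its $A$-tail it writes down the explicit maximal green sequence $(1,\dots,k-1,1,k-1,\dots,3)$ with induced permutation the $(k-1)$-cycle $\sigma_A=((k-1)\,(k-2)\,\cdots\,1)$, so the last $k-2$ entries are exactly $\sigma_A^{-1}(2),\dots,\sigma_A^{-1}(k-1)$ and the $(k-2)$-fold reverse rotation lands on $(k-1,\dots,2,\mgsi_C,1,2,\dots,k-1)$ for $Q$ with no further bookkeeping. Your ordering would require a different explicit $A_{k-1}$ sequence with the analogous tail property, which you do not supply for general $k$; for $k=3$ the two orderings coincide (both are just $\mu_2$) and your argument is identical to the paper's.
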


\begin{proof}
	Clearly if the subquiver $C$ does not admit a maximal green sequence then by
	Theorem~\ref{thm:subquiver} the full quiver $Q$ cannot either, so assume that
	$C$ does admit a maximal green sequence.

	The mutated quiver $Q' = \mu_2 \circ \mu_3 \circ \dotsb \circ \mu_{k-1}(Q)$
	can be decomposed into the direct sum of $C$ and a quiver of type $A_{k-1}$:
	\[Q' = C \oplus^{2}_{k} \left( 1 \leftarrow 2 \to 3 \to \dotsb \to k-1
	\right). \]
	The type $A_{k-1}$ quiver has a maximal green sequence $\left( 1, \dotsc, k-1,
	1, k-1, k-2, \dotsc, 3 \right)$ with induced permutation $\sigma_A = \left(
	\;(k-1)\enspace (k-2)\enspace \dotsb\enspace 2\enspace 1\; \right)$ which
	shifts the index of each vertex down one, modulo $k-1$. By assumption $C$ has
	maximal green sequence $\mgsi_C$ with induced permutation $\sigma_C$, so
	Theorem~\ref{thm:directsum-mgs} shows that $\left( \mgsi_C, 1, \dotsc,
	k-1, 1, k-1, \dotsc, 3 \right)$ is a maximal green sequence for $Q'$ with
	induced permutation $\sigma = \sigma_C \circ \sigma_A$. As $1, \dotsc k-1$ are
	not in the vertex set of $C$, its restriction $\sigma\big|_{1,\dotsc,k-1} =
	\sigma_A$ so by the reverse rotation lemma,
	Theorem~\ref{thm:reverserotationlem},
	\[\left( k-1, \dotsc, 2, \mgsi_C, 1, 2, \dotsc, k-1 \right)\]
	is a maximal green sequence for $\mu_{k-1} \dots \mu_2 \left( \mu_2 \dotsc
	\mu_{k-1}(Q) \right) = Q$.

	In the case when $k=3$ then the maximal green sequence is $\left( 2,
	\mgsi_C, 1, 2 \right)$ as claimed.
\end{proof}

To allow us to easily refer to quivers of the form required in
Theorem~\ref{thm:triangleend} and illustrated in Figure~\ref{fig:kcycle} we say
such quivers \hdef{end in a k-cycle}.

\section{Cluster Algebras and Upper Cluster Algebras}\label{sec:upper}
Let $m,n$ be positive integers such that $m\geq n$. Denote $\mathcal{F}=\mathbb{Q}(x_1,\dots,x_m)$.
A \hdef{seed} $\Sigma=(\tilde{\bf x},\tilde{Q})$ is a pair where $\tilde{\bf x}=\{x_1,\ldots,x_m\}$ is an $m$-tuple of elements of $\mathcal{F}$ that form a free generating set, and $\tilde{Q}$ is an ice quiver with $n$ mutable vertices and $m-n$ frozen vertices. 

For a seed $(\tilde{\bf x},\tilde{Q})$ and a specified index $1\leq k \leq n$, define the \hdef{seed mutation} of $(\tilde{\bf x},\tilde{Q})$ at $k$, denoted $\mu_{k}(\tilde{\bf x},\tilde{Q})$, to be a new seed $(\tilde{\bf x}',\tilde{Q}')$ where $\tilde{Q}'$ is the quiver $\mu_k(\tilde{Q})$ defined in Definition~\ref{def:mutation} and $\tilde{\bf x}'=\{x_1',\ldots,x_m'\}$ with
$$
\aligned
 x_j'&= \begin{cases} 
\displaystyle{x_k^{-1}\left(\prod_{i \leftarrow k \in Q'}x_i+\prod_{i \rightarrow k \in Q'}x_i\right)}
  & \mbox{if } j=k; \\
x_j & \mbox{otherwise.} \end{cases}
\endaligned
$$
Note that seed mutation is an involution, so mutating $(\tilde{\bf x}', \tilde{Q}')$ at $k$ will return to our original seed $(\tilde{\bf x}, \tilde{Q})$. 

Two seeds $\Sigma_1$ and $\Sigma_2$ are said to be \hdef{mutation-equivalent} or in the same \hdef{mutation class} if $\Sigma_2$ can be obtained by a sequence of mutations from $\Sigma_1$. This is obviously an equivalence relation.

%
In a given seed $(\tilde{\bf x}, \tilde{Q})$, we call the set ${\bf
x}=\{x_1,\dots,x_n\}$ the \hdef{cluster} of the seed and each element of a
cluster the \hdef{cluster variables}.  This emphasizes the different roles
played by $x_i \; (i \le n)$ and $x_i\; (i>n)$, where those $x_i$ with $i > n$
are linked to coefficients of the cluster algebra and we denote
\[\mathbb{ZP}=\mathbb{Z}[x_{n+1}^{\pm1},\dots,x_m^{\pm1}].\]

In the paper, we shall only study cluster algebras of geometric type, defined as follows.
 
\begin{definition}
	Given a seed $(\tilde{\bf x},\tilde{Q})$, the \hdef{cluster algebra}
	$\mathcal{A}(\tilde{\bf x},\tilde{Q})$ \hdef{of geometric type} is the subring
	of $\mathcal{F}$ generated over $\mathbb{ZP}$ by all cluster variables
	appearing in all seeds that are mutation-equivalent to $(\tilde{\bf
	x},\tilde{Q})$. The seed $(\tilde{\bf x},\tilde{Q})$ is called the
	\hdef{initial seed} of $\mathcal{A}(\tilde{\bf x},\tilde{Q})$. 
\end{definition}

It follows from the definition that any seed in the same mutation class will generate the same cluster algebra up to isomorphism. 
 
\begin{definition}
Given a cluster algebra $\mathcal{A}$, the \hdef{upper cluster algebra} $\mathcal{U}$ is defined as $$ \mathcal{U} =
 \bigcap_{{\bf x}=\{x_1,\ldots,x_n\}}
  \mathbb{ZP}[x_1^{\pm 1},\ldots,x_n^{\pm 1}]$$ where ${\bf x}$ runs over all clusters of $\mathcal{A}$.
\end{definition}
By the Laurent Phenomenon \cite[Theorem 3.1]{cluster1} there is a natural containment $\mathcal{A} \subseteq \mathcal{U}$.

If a quiver $Q$ is mutation-equivalent to an acyclic quiver we say that $Q$ is \hdef{mutation-acyclic.} We say that the cluster algebra $\mathcal{A}(\tilde{\bf x},\tilde{Q})$ is \hdef{acyclic} if $Q$ is mutation-acyclic; otherwise we say that the cluster algebra is \hdef{non-acyclic}.

\begin{theorem}{\cite{BFZ,M}}\label{thm:acyclic_AU}
If $\mathcal{A}$ is an acyclic cluster algebra, then $\mathcal{A}=\mathcal{U}$. 
\end{theorem}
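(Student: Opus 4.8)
The plan is to prove the two inclusions $\mathcal{A}\subseteq\mathcal{U}$ and $\mathcal{U}\subseteq\mathcal{A}$ separately, using the acyclicity hypothesis only for the second. The first inclusion is immediate and needs no hypothesis: by the Laurent Phenomenon quoted above, every cluster variable of $\mathcal{A}$ is a Laurent polynomial over $\mathbb{ZP}$ in the variables of each fixed cluster, hence lies in $\mathbb{ZP}[x_1^{\pm1},\dots,x_n^{\pm1}]$ for \emph{every} cluster, and therefore in the intersection $\mathcal{U}$; since $\mathcal{U}$ is a ring this gives $\mathcal{A}\subseteq\mathcal{U}$.

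For the reverse inclusion I would fix an acyclic seed $\Sigma_0=(\tilde{\mathbf x},\tilde Q)$ with $\tilde Q$ acyclic and introduce the two classical auxiliary algebras attached to $\Sigma_0$: the \emph{lower bound} $\mathcal{L}(\Sigma_0)$, the $\mathbb{ZP}$-subalgebra generated by the initial cluster variables $x_1,\dots,x_n$ together with their one-step mutations $x_1',\dots,x_n'$; and the \emph{upper bound} $\mathcal{U}(\Sigma_0)=\mathbb{ZP}[x_1^{\pm1},\dots,x_n^{\pm1}]\cap\bigcap_{k=1}^{n}\mathbb{ZP}[x_1^{\pm1},\dots,(x_k')^{\pm1},\dots,x_n^{\pm1}]$, where the intersection runs only over the $n$ seeds adjacent to $\Sigma_0$. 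By construction $\mathcal{L}(\Sigma_0)\subseteq\mathcal{A}$, and since $\mathcal{U}$ intersects over all clusters while $\mathcal{U}(\Sigma_0)$ intersects over only $n+1$ of them, $\mathcal{U}\subseteq\mathcal{U}(\Sigma_0)$. Thus it suffices to show the sandwiched chain collapses, i.e. $\mathcal{L}(\Sigma_0)=\mathcal{U}(\Sigma_0)$ whenever $\tilde Q$ is acyclic; this forces $\mathcal{U}\subseteq\mathcal{U}(\Sigma_0)=\mathcal{L}(\Sigma_0)\subseteq\mathcal{A}$, completing the proof.

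The equality $\mathcal{L}(\Sigma_0)=\mathcal{U}(\Sigma_0)$ is the heart of the matter, and I expect it to be the main obstacle. The idea is to take an arbitrary $y\in\mathcal{U}(\Sigma_0)$ — a priori only known to be a Laurent polynomial in the initial cluster with no pole along any $x_k=0$ and no pole along any $x_k'=0$ — and to show it is in fact a polynomial in $x_1,\dots,x_n,x_1',\dots,x_n'$. Acyclicity of $\tilde Q$ provides a linear ordering of the vertices compatible with the arrows, along which one runs an induction: comparing the expansion of $y$ in the cluster of $\mu_n(\Sigma_0)$ with its expansion in the initial cluster, the exchange relation at the maximal vertex $n$ pins down the dependence of $y$ on $x_n$ and $x_n'$; subtracting off the part of $y$ built from monomials in $x_n,x_n'$ reduces the problem to a cluster algebra on one fewer vertex, and one iterates. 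This is precisely the Berenstein--Fomin--Zelevinsky argument, carried out in their setting under an auxiliary coprimality hypothesis on $\Sigma_0$. To obtain the statement with no coprimality assumption I would remove that hypothesis in one of the two standard ways: either adjoin extra frozen variables to $\Sigma_0$ so that the enlarged seed is coprime, prove $\mathcal{A}=\mathcal{U}$ there, and check the equality descends upon specializing the new frozen variables; or invoke Muller's local-acyclicity machinery — an acyclic seed is trivially locally acyclic (it covers itself), both $\mathcal{A}$ and $\mathcal{U}$ are compatible with the relevant cluster localizations, and on each acyclic chart (after the harmless coprime reduction) they agree, hence agree globally. Since this theorem is exactly the content of \cite{BFZ} together with \cite{M}, in the paper we simply cite it rather than reproduce either route.
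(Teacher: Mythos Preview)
Your proposal is correct, and indeed you anticipate the situation exactly: the paper does not prove this theorem at all but simply cites it from \cite{BFZ} and \cite{M}. The sketch you give---Laurent phenomenon for $\mathcal{A}\subseteq\mathcal{U}$, then the lower-bound/upper-bound collapse $\mathcal{L}(\Sigma_0)=\mathcal{U}(\Sigma_0)$ at an acyclic seed for the reverse inclusion, with Muller's work removing the coprimality hypothesis---is precisely the content of those references, so there is nothing further to compare.
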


\subsection{Locally acyclic cluster algebras and the Louise property}
Muller expanded this result to \hdef{locally acyclic} cluster algebras, which are those that can be covered by finitely many acyclic cluster localizations. We refer the reader to the paper \cite{M2} for a full definition, but instead we recall a sufficient property, called the Louise property, which implies local acyclicity of the cluster algebra as well as many other nice properties. 

\begin{definition}
Let $(Q,F)$ be an ice quiver. We define $i \rightarrow j \in Q_1$ to be a \hdef{separating edge} of $Q$ if $i$ and $j$ are mutable and there is no bi-infinite path through the edge $i \rightarrow j$. 
\end{definition}
\begin{definition}
Let $V \subset Q_0$ and let $Q[{V}]$ denote the induced subquiver of $Q$ with vertex set $V$. 
The Louise property is then defined recursively. We say that a quiver satisfies the Louise property, or is Louise, if either \begin{enumerate}
\item $Q$ has no edges, 
\item or there exists $Q' \in \mut(Q)$ that has a separating edge $i \rightarrow j$, such that the quivers $Q'[{Q'_0 \setminus \{i\}}],Q'[{Q'_0 \setminus \{j\}}]$, and $Q'[{Q'_0 \setminus \{i,j\}}]$ are Louise. 
\end{enumerate}
A cluster algebra is Louise if it is generated by a Louise quiver. 
\end{definition}
In particular any acyclic quiver has the Louise property. 
\begin{theorem}\cite[Proposition 2.6]{MS} 
If a cluster algebra is Louise, then it is locally acyclic. 
\end{theorem}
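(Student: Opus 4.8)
The plan is to argue by induction on the number of vertices of the quiver, leaning on Muller's theory of cluster localizations and covers from \cite{M,M2}. Recall that a cluster algebra $\mathcal{A}$ is locally acyclic when $\operatorname{Spec}\mathcal{A}$ admits a finite cover by open affine subschemes, each the spectrum of an acyclic cluster algebra obtained from $\mathcal{A}$ by inverting finitely many cluster variables (a \emph{cluster localization}); such covers can be refined, an acyclic cluster algebra is covered by itself, and a cluster localization of a locally acyclic cluster algebra is again locally acyclic. Since mutation-equivalent seeds generate the same cluster algebra up to isomorphism and the Louise property is stated up to mutation, at each stage I may assume that $Q$ is the quiver exhibiting the Louise data. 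If $Q$ has no edges then it has no oriented cycles, so $\mathcal{A}$ is acyclic and hence locally acyclic; this is the base case.

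For the inductive step, suppose $Q$ carries a separating edge $e\colon i\to j$, with induced subquivers $Q[Q_0\setminus\{i\}]$, $Q[Q_0\setminus\{j\}]$, $Q[Q_0\setminus\{i,j\}]$ Louise. I would invoke two facts from \cite{M,M2}. First, $\mathcal{A}[x_i^{-1}]$ and $\mathcal{A}[x_j^{-1}]$ are cluster localizations of $\mathcal{A}$ --- the cluster algebras of $Q$ with the corresponding endpoint frozen --- and, because $e$ is separating (no bi-infinite path through $e$), the ideal $(x_i,x_j)$ of $\mathcal{A}$ is the unit ideal, so $\{\operatorname{Spec}\mathcal{A}[x_i^{-1}],\operatorname{Spec}\mathcal{A}[x_j^{-1}]\}$ covers $\operatorname{Spec}\mathcal{A}$. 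Second, once $i$ is frozen the edge $e$ meets a frozen vertex and still admits no bi-infinite path, which permits deleting $i$ and identifying $\mathcal{A}[x_i^{-1}]$ with a cluster localization of the cluster algebra of $Q[Q_0\setminus\{i\}]$ with suitable frozen vertices (symmetrically for $j$), the overlap of the two pieces being governed by $Q[Q_0\setminus\{i,j\}]$. By hypothesis these three induced subquivers are Louise with strictly fewer vertices, so by induction their cluster algebras are locally acyclic; since local acyclicity passes to cluster localizations, $\mathcal{A}[x_i^{-1}]$ and $\mathcal{A}[x_j^{-1}]$ are locally acyclic. Refining their acyclic covers and combining with the cover from the first fact produces a finite acyclic cover of $\operatorname{Spec}\mathcal{A}$, so $\mathcal{A}$ is locally acyclic.

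The main obstacle is the pair of facts used in the inductive step: that the two endpoint localizations cover the spectrum, and that each is a cluster localization of the cluster algebra of the corresponding deleted induced subquiver. Both rest on the hypothesis that no bi-infinite path runs through $e$, which is exactly what forces the relevant exchange polynomials to factor so that, after a suitable sequence of mutations, one of $x_i$, $x_j$ becomes a unit; turning this factorization into the covering statement and the deletion isomorphism is the technical heart of the argument, and is precisely what is developed in \cite{M,M2}.
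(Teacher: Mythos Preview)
The paper does not give its own proof of this statement; it is quoted as \cite[Proposition 2.6]{MS} and used as a black box, so there is nothing here to compare your argument against. Your sketch is essentially the standard ``Banff algorithm'' argument from \cite{M2,MS} (induct on the number of vertices, use the separating edge to produce a two-piece cluster-localization cover, and identify each piece with a localization of the cluster algebra of a smaller Louise quiver), and as such it is a correct outline of the proof that appears in the cited sources.
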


\begin{theorem}\cite[Theorem 2]{M}\label{thm:LA_AU}
If $\mathcal{A}$ is locally acyclic, then $\mathcal{A}=\mathcal{U}$. 
\end{theorem}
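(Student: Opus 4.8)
The statement is Muller's; the plan is to reproduce the skeleton of his argument, which combines the definition of local acyclicity with the acyclic case, Theorem~\ref{thm:acyclic_AU}, via a descent (``sheaf'') property of upper cluster algebras along cluster localizations. By hypothesis $\mathcal{A}$ admits a finite cover by acyclic cluster localizations: there is a finite index set $I$ and, for each $i \in I$, a set $S_i$ of cluster variables of $\mathcal{A}$ such that the localization $\mathcal{A}[S_i^{-1}]$ is again a cluster algebra which is acyclic, and such that the $S_i$ jointly ``cover'' $\mathcal{A}$; all of these algebras sit inside the common field $\mathcal{F}$. It suffices to prove two things: (i) $\mathcal{U}(\mathcal{A}) = \bigcap_{i \in I} \mathcal{U}\big(\mathcal{A}[S_i^{-1}]\big)$ — a sheaf property for $\mathcal{U}$ — and (ii) $\mathcal{A} = \bigcap_{i \in I} \mathcal{A}[S_i^{-1}]$ — a gluing property for $\mathcal{A}$; for then, applying Theorem~\ref{thm:acyclic_AU} to each acyclic $\mathcal{A}[S_i^{-1}]$, one gets $\mathcal{A} = \bigcap_i \mathcal{A}[S_i^{-1}] = \bigcap_i \mathcal{U}(\mathcal{A}[S_i^{-1}]) = \mathcal{U}(\mathcal{A})$. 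Since $\mathcal{A} \subseteq \mathcal{U}(\mathcal{A})$ always holds by the Laurent phenomenon, the content of (i) and (ii) is in each case the reverse inclusion.

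Property (ii) is the easier of the two. The inclusion $\mathcal{A} \subseteq \bigcap_i \mathcal{A}[S_i^{-1}]$ is immediate. For the reverse, write $s_i$ for the product of the variables in $S_i$; since $\mathcal{A}$ is an integral domain, $\mathcal{A}[S_i^{-1}]$ is the ring localization $\mathcal{A}[s_i^{-1}]$, so an element $f$ of the intersection can be written as $f = a_i / s_i^{N}$ with $a_i \in \mathcal{A}$ and, using finiteness of $I$, a common exponent $N$. The covering hypothesis says precisely that $\{ s_i : i \in I \}$ generates the unit ideal of $\mathcal{A}$, so after enlarging $N$ we have $1 = \sum_i c_i\, s_i^{N}$ with $c_i \in \mathcal{A}$, whence $f = \sum_i c_i\, s_i^{N} f = \sum_i c_i a_i \in \mathcal{A}$. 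Likewise the easy half of (i), $\mathcal{U}(\mathcal{A}) \subseteq \mathcal{U}(\mathcal{A}[S_i^{-1}])$, is a bookkeeping check: each cluster of $\mathcal{A}[S_i^{-1}]$ together with $S_i$ is a cluster of $\mathcal{A}$ (unfreeze the $S_i$-vertices), so an element that is a Laurent polynomial over $\mathbb{ZP}$ in every cluster of $\mathcal{A}$ is in particular a Laurent polynomial over $\mathbb{ZP}[S_i^{-1}]$ in every cluster of $\mathcal{A}[S_i^{-1}]$.

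The genuinely hard inclusion, and the main obstacle, is $\bigcap_{i \in I}\mathcal{U}\big(\mathcal{A}[S_i^{-1}]\big) \subseteq \mathcal{U}(\mathcal{A})$. Fix a cluster $\mathbf{x} = \{x_1,\dots,x_n\}$ of $\mathcal{A}$ and $f$ in the left-hand side; then $f \in \mathbb{ZP}[x_1^{\pm1},\dots,x_n^{\pm1}][s_i^{-1}]$ for every $i$, so $s_i^{N} f$ lies in the Laurent ring $\mathbb{ZP}[x_1^{\pm1},\dots,x_n^{\pm1}]$ for $N \gg 0$, and one must remove all the denominators $s_i$ simultaneously to conclude $f$ itself lies there. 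This reduces to the $s_i$ being suitably coprime \emph{inside that Noetherian Laurent ring}, which is exactly where Muller's careful setup is indispensable: that inverting a cluster variable is literally freezing its vertex and then inverting (so $\mathcal{A}[S_i^{-1}]$ really is both a cluster algebra and a ring localization), that the cluster algebras and localizations in play are finitely generated Noetherian domains, and that a ``cover'' is defined so that the partition-of-unity identity is available at every cluster at once. A secondary obstacle is constructing the cover when local acyclicity is verified through the recursive Louise property (a separating edge $i \to j$ with the three vertex-deleted subquivers Louise): there one must unwind the recursion, checking at each stage that passing to an induced subquiver after freezing corresponds to a cluster localization of the ambient algebra, so that the recursion terminates in an honest finite acyclic cover. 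Granting this machinery, the assembly in the first paragraph via Theorem~\ref{thm:acyclic_AU} is formal.
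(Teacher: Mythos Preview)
The paper does not prove this statement: Theorem~\ref{thm:LA_AU} is quoted directly from Muller~\cite[Theorem~2]{M} with no accompanying argument, and is invoked only as a black box in Theorem~\ref{thm:mmi_Louise} and Corollary~\ref{cor:mmi_upper}. There is therefore no ``paper's own proof'' to compare your proposal against; the paper's treatment is simply the citation.

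Your outline is a fair high-level sketch of Muller's original strategy, and you are candid that the hard inclusion in~(i) is where the real content lies. One technical remark on that step: your claim that $f \in \mathbb{ZP}[x_1^{\pm1},\dots,x_n^{\pm1}][s_i^{-1}]$ for an \emph{arbitrary} cluster $\mathbf{x}$ of $\mathcal{A}$ is not immediate, because the clusters of $\mathcal{A}[S_i^{-1}]$ correspond only to those clusters of $\mathcal{A}$ containing $S_i$, and a general $\mathbf{x}$ need not. Muller's route is rather to prove that the upper cluster algebra commutes with cluster localization, $\mathcal{U}(\mathcal{A})[S_i^{-1}] = \mathcal{U}(\mathcal{A}[S_i^{-1}])$, and then run the same partition-of-unity argument you give for~(ii) on $\mathcal{U}$ itself. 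Since the present paper makes no attempt to reprove the result, this is a comment on your sketch of~\cite{M} rather than on anything in the paper.
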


\section{On the mutation class of minimal mutation-infinite quivers}\label{sec:mmi-mut}
Minimal mutation-infinite quivers were first studied by Felikson, Shapiro and
Tumarkin in~\cite[Section 7]{felikson}, where such quivers were shown to exist
only up to rank 10. All such quivers were later classified in~\cite{lawson} into
a number of move-classes, each of which has a distinguished representative as
either an orientation of a hyperbolic Coxeter simplex diagram, a double arrow
quiver or one of the exceptional quivers. 
\begin{definition}[{\cite[Section 4]{lawson}}]
	A \hdef{minimal mutation-infinite move} is a sequence of mutations of a
	minimal mutation-infinite quiver which preserves the property of being minimal
	mutation-infinite. As such the image of any minimal mutation-infinite quiver
	under a move is another minimal mutation-infinite quiver.
\end{definition}

These moves then classify all minimal mutation-infinite quivers under the
equivalence relation where two minimal mutation-infinite quivers are
move-equivalent if there is a sequence of moves taking one quiver to the other. 

\begin{definition}
	A \hdef{move-class} of a minimal mutation-infinite quiver is the equivalence
	class under this relation containing that quiver.
\end{definition}
All minimal mutation-infinite quivers belong to one of 47 move-classes, which
have representatives given in Figures~\ref{fig:hcsreps},~\ref{fig:dareps}
and~\ref{fig:excreps} as described in~\cite[Section 5]{lawson}. 
\begin{figure}
\includegraphics{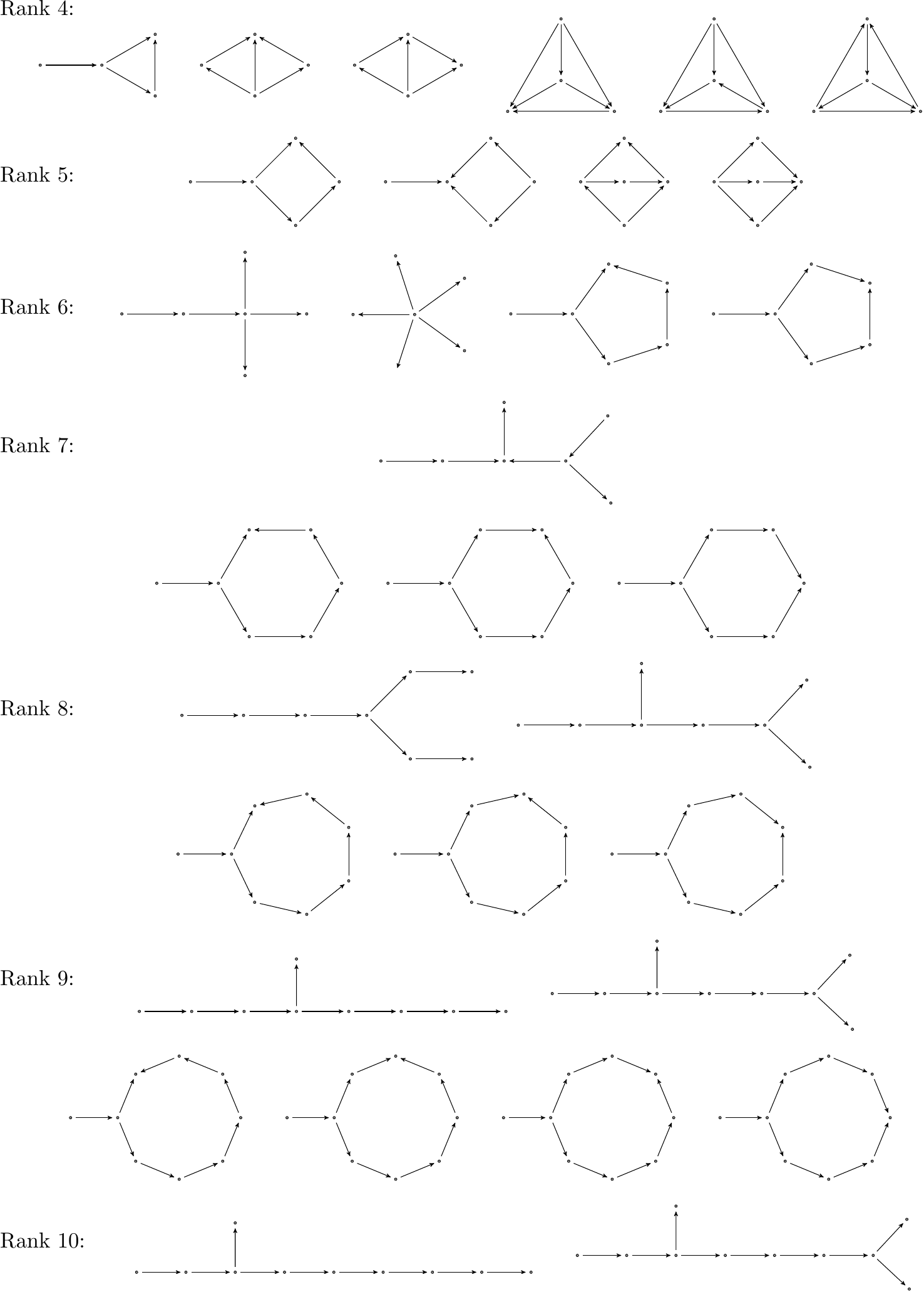}
\caption{Representatives of minimal mutation-infinite classes arising from
orientations of hyperbolic Coxeter simplex diagrams as given in~\cite{lawson}.}
\label{fig:hcsreps}
\end{figure}
\begin{figure}
\includegraphics{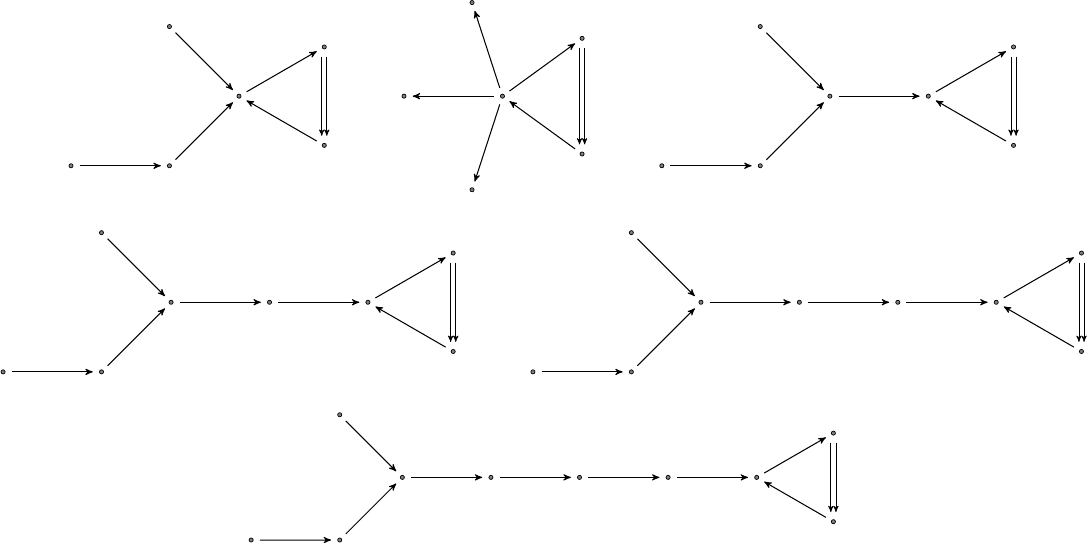}
\caption{Double arrow representatives of minimal mutation-infinite
classes as given in~\cite{lawson}.}
\label{fig:dareps}
\end{figure}
\begin{figure}
\includegraphics{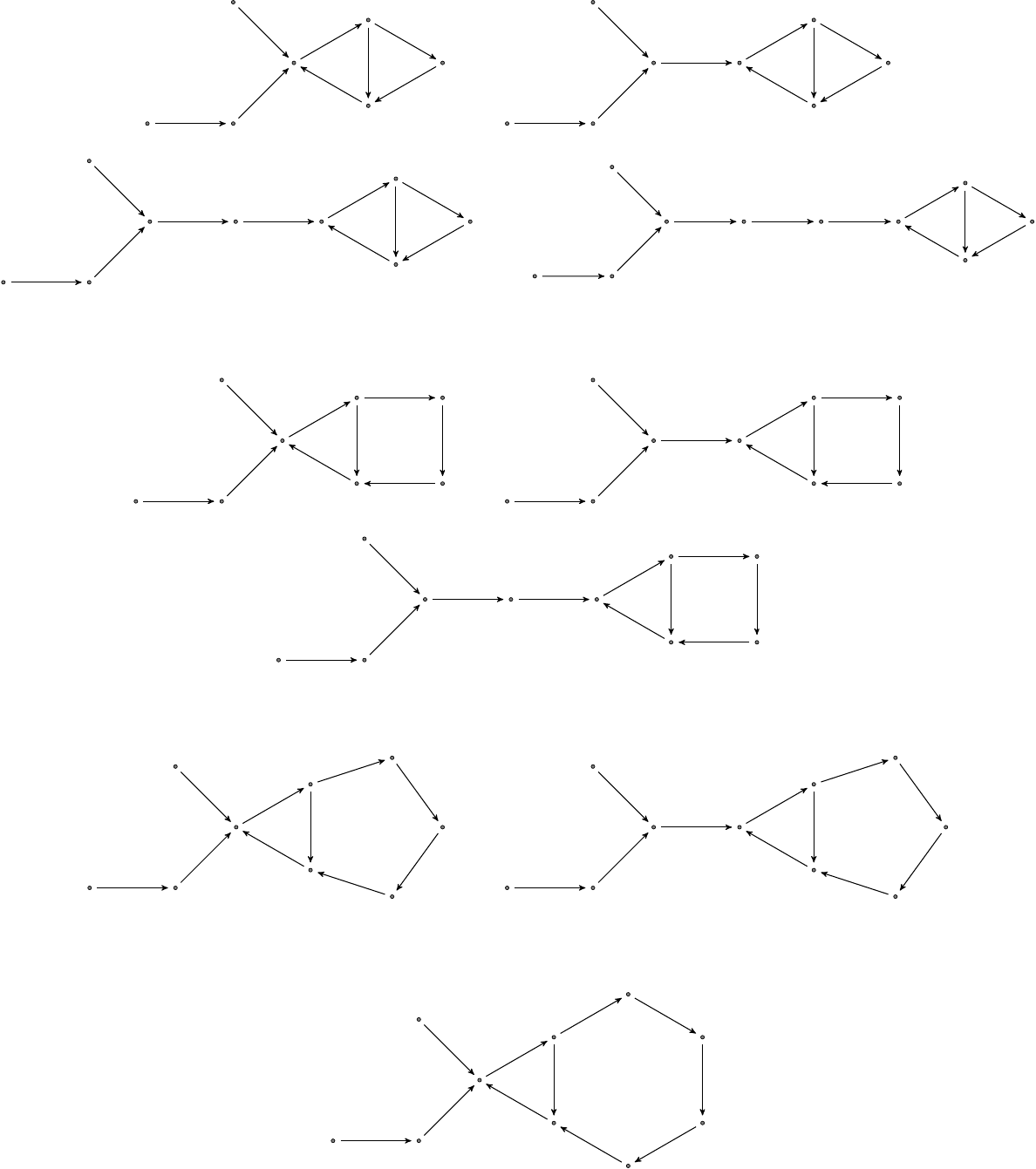}
\caption{Exceptional representatives of minimal mutation-infinite
classes as given in~\cite{lawson}.}
\label{fig:excreps}
\end{figure}

In this section we show which move-classes are mutation-acyclic and which are not mutation-acyclic. We also use this information together with another mutation invariant to show that many of the move-classes generate distinct mutation classes. 

\subsection{Hyperbolic Coxeter simplex representatives.}
We now determine if distinct move-classes of minimal mutation-infinite quivers with hyperbolic Coxeter representatives belong to the same mutation class.

For a rank $n$ quiver $Q$ we associate an $n \times n$ matrix $B_Q=(b_{ij})$, where $b_{ij} = 0$ if $i=j$ and otherwise $b_{ij}$ is the number of edges $i \rightarrow j \in Q_1$.  Note that if $i \rightarrow j \in Q_1$ then $b_{ji} < 0$. 

It was shown in \cite[Lemma 3.2]{BFZ} that for a quiver $Q$ the rank of the matrix $B_Q$ is preserved by mutation and hence is an invariant of the mutation class of $Q$.
We use the rank of the matrix $B_Q$ and the number of acyclic quivers in the mutation class to distinguish the mutation classes of minimal mutation-infinite quivers. 

\begin{theorem}\cite[Corollary 4]{caldero-keller}
The set of acyclic quivers form a connected subgraph of the exchange graph. Furthermore, if $Q$ is an acyclic quiver, then every acyclic quiver in $\mut(Q)$ can be obtained from $Q$ by a sequence of sink/source mutations. 
\end{theorem}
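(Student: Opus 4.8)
The plan is to separate the statement into its two halves --- connectedness of the acyclic locus inside the exchange graph $\Gamma(Q)$, and the refinement that any two acyclic quivers in a mutation class are joined by a sequence of mutations at sinks and sources --- to dispose of the refinement by a short local argument, and to obtain connectedness from representation theory. The key reduction is the observation that if $Q'$ is acyclic and $\mu_k(Q')$ is again acyclic, then $k$ must be a sink or a source of $Q'$. Granting this local lemma, mutating an acyclic quiver at a sink or source again gives an acyclic quiver, so once the acyclic locus is known to be connected in $\Gamma(Q)$, any path through it consists of mutations between acyclic quivers, hence of sink/source mutations; the ``furthermore'' clause follows. So the theorem comes down to the local lemma together with connectedness.

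The local lemma is elementary. If $k$ is neither a sink nor a source there are arrows $i\to k$ and $k\to j$, with $i,j,k$ pairwise distinct ($i=j$ would create a $2$-cycle). Acyclicity of $Q'$ forbids an arrow $j\to i$, so the $2$-path $i\to k\to j$ contributes an arrow $i\to j$ in $\mu_k(Q')$ that cannot be cancelled: a competing arrow $j\to i$ could only come from a $2$-path $j\to k\to i$ (impossible, as this needs an arrow $k\to i$, forbidden against $i\to k$ by the $2$-cycle condition) or from the reversal step at $k$ (irrelevant, since $i,j\neq k$). For the same reasons $i\to k$ and $k\to j$ become arrows $k\to i$ and $j\to k$ in $\mu_k(Q')$ and survive. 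Hence $\mu_k(Q')$ contains the oriented triangle $i\to j\to k\to i$, contradicting acyclicity.

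For connectedness I would pass to the cluster category, following Caldero and Keller. Fix an acyclic quiver $Q$ in the mutation class and set $\mathcal{C}=\mathcal{C}_Q = D^b(kQ)/\tau^{-1}[1]$; under the cluster-category model of the cluster algebra, $\Gamma(Q)$ is identified with the graph of basic cluster-tilting objects of $\mathcal{C}$ under mutation, compatibly with the quivers of endomorphism algebras. The point is that a cluster-tilting object $T$ has acyclic quiver precisely when $\operatorname{End}_{\mathcal{C}}(T)$ is hereditary --- the potential carried by its quiver must then be zero, so $\operatorname{End}_{\mathcal{C}}(T)\cong kQ_T$ --- and such $T$ are exactly the ones induced by tilting modules over the (finitely many) hereditary algebras obtained from $kQ$ by reflection functors. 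With this dictionary in hand, connectedness of the acyclic locus follows from two facts: the tilting graph of a hereditary algebra is connected (Happel--Unger), and the passage between hereditary algebras within such a reflection class, as well as the sink/source mutations of $Q$, are all realised by reflection functors / APR-tilts, which on the level of cluster-tilting objects are mutations at a simple projective or simple injective summand, i.e. sink/source mutations of the quiver. Concatenating these walks any acyclic quiver back to $Q$ through acyclic quivers using only sink/source moves.

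The main obstacle is precisely the dictionary invoked above: showing that an acyclic cluster-tilting object lifts to a genuine tilting module over some hereditary $kQ'$ with $\mathcal{C}_{Q'}\cong\mathcal{C}$. This is where heredity is essential --- one uses the vanishing of the relevant self-extension groups together with the Auslander--Reiten structure of $D^b(kQ)$ to slide $T$ into a single slice, i.e. a single embedded copy of a module category, whose tilting objects are exactly the tilting modules with acyclic endomorphism quiver. One could alternatively try to argue through the cluster scattering diagram, realising acyclic seeds as the chambers reachable from the initial chamber by crossing only the walls of a suitable finite sub-fan, with sink/source mutations as those wall-crossings; but identifying which chambers are acyclic is the same difficulty re-expressed.
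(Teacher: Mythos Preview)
The paper does not supply a proof of this statement; it is quoted from Caldero--Keller and used as a black box to derive the subsequent corollary on finiteness of acyclic quivers in a mutation class. There is therefore no proof in the paper to compare your proposal against.

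On the substance of your sketch: the local lemma (that a mutation between two acyclic quivers must be at a sink or source) is correct and cleanly argued, and the reduction of the ``furthermore'' clause to connectedness via this lemma is sound. The cluster-category route you outline for connectedness is indeed the one taken in the original reference. One imprecision worth flagging: \emph{every} cluster-tilting object in $\mathcal{C}_Q$ is induced by a tilting module over some reflection $kQ'$ of $kQ$ (this is the BMRRT description), so that phrase alone does not single out the acyclic ones. What is actually needed is that $\operatorname{End}_{\mathcal{C}}(T)$ hereditary forces the underlying tilted algebra $\operatorname{End}_{kQ'}(T)$ to be hereditary already, i.e.\ $T$ is a slice module, and slice modules are exactly the objects connected to $kQ'$ by iterated APR-tilts. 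You correctly identify this identification as the crux. The Happel--Unger connectedness of the full tilting graph, however, is not the right tool here, since that graph passes through tilting modules whose endomorphism quivers contain oriented cycles and so would not keep you inside the acyclic locus.
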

\begin{corollary}
There are finitely many acyclic quivers in any mutation class. 
\end{corollary}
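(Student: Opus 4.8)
The plan is to read this off directly from the Caldero--Keller theorem stated just above. First I would dispose of the trivial case: if the mutation class of $Q$ contains no acyclic quiver at all, then there are zero such quivers and we are done. So assume there is at least one acyclic quiver, and fix one, say $Q_0 \in \mut(Q)$; then $\mut(Q_0) = \mut(Q)$, and by the theorem every acyclic quiver in this mutation class is obtained from $Q_0$ by a finite sequence of sink/source mutations.

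The key observation is that a mutation of an acyclic quiver at a sink or a source does not change the underlying multigraph. Indeed, if $k$ is a source or a sink of a quiver $R$, then there are no $2$-paths $i\to k\to j$, so step (1) of Definition~\ref{def:mutation} adds no arrows; step (2) merely reverses the arrows incident to $k$ (turning a source into a sink or vice versa); and step (3) deletes nothing, since no $2$-cycles were created and there are no frozen vertices here. Hence $\mu_k(R)$ has exactly the same underlying multigraph as $R$, only with some arrow orientations reversed. Applying this along the sink/source sequence furnished by the theorem, every acyclic quiver in $\mut(Q)$ has the same underlying multigraph $G$ as $Q_0$.

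Combining the two points finishes the argument: a finite multigraph $G$ on $n$ vertices admits only finitely many orientations — at most $2^{e(G)}$, where $e(G)$ is the number of edges — so there are only finitely many quivers (up to the relevant notion of equivalence) with underlying graph $G$, and a fortiori only finitely many acyclic ones. Therefore $\mut(Q)$ contains only finitely many acyclic quivers.

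There is essentially no serious obstacle here; the one point that deserves a careful line is the claim that sink/source mutation preserves the underlying multigraph, which is immediate from Definition~\ref{def:mutation} once one notes the absence of $2$-paths through a source or a sink. I would present the corollary as a short two-sentence proof citing the theorem, the "no new arrows at a source/sink" remark, and the finiteness of the set of orientations of a fixed finite graph.
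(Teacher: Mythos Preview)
Your proof is correct and follows essentially the same approach as the paper: both argue that sink/source mutation preserves the underlying graph, so by the Caldero--Keller theorem all acyclic quivers in the class share a common underlying graph, which has only finitely many orientations. Your version is more detailed (explicitly handling the empty case and justifying the preservation of the underlying multigraph via Definition~\ref{def:mutation}), but the idea is identical.
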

\begin{proof}
Clearly a sink/source mutation preserves the underlying graph of a quiver and there are only finitely many orientations of this graph. Therefore the number of these orientations that are acyclic is also finite.
\end{proof}

\begin{theorem}\label{thm:hcs_diff_class}
Mutating the sixth rank 4 quiver in Figure~\ref{fig:hcsreps} at the top vertex yields the first rank 4 quiver, so both of their move-classes are in the same mutation class. For the rest of the move-classes of minimal mutation-infinite quivers that are represented by an orientation of a hyperbolic Coxeter simplex diagram, each move-class determines a distinct mutation class.
\end{theorem}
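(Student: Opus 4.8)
The plan is to reduce everything to comparing move-classes of the same rank, since the number of vertices of a quiver is obviously unchanged by mutation, and then to separate distinct same-rank move-classes using two mutation invariants, dealing with the single genuine coincidence by an explicit computation. That coincidence is easy: applying the mutation $\mu_k$ at the top vertex $k$ of the sixth rank-$4$ representative of Figure~\ref{fig:hcsreps}, following Definition~\ref{def:mutation}, one checks directly that the result is isomorphic to the first rank-$4$ representative (it agrees with it after a relabelling of the vertices), so those two move-classes lie in a common mutation class.

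For all the other comparisons I would use (i) the rank of the skew-symmetric matrix $B_Q$, which is a mutation invariant by \cite[Lemma 3.2]{BFZ}, and (ii) the number of acyclic quivers in the mutation class. Invariant (ii) is well defined because, by \cite[Corollary 4]{caldero-keller}, the acyclic quivers in a mutation class form a connected subgraph of the exchange graph along which any two are joined by sink/source mutations; since such mutations preserve the underlying graph there are only finitely many acyclic quivers, and this count vanishes precisely for the non-mutation-acyclic classes. The argument then becomes a finite bookkeeping exercise. For each rank $n$ occurring in Figure~\ref{fig:hcsreps} I would write down $B_Q$ for every representative and record $\operatorname{rank} B_Q$, which already separates most of the move-classes. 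For representatives of equal rank with equal $\operatorname{rank} B_Q$ I would then decide mutation-acyclicity, either by exhibiting an explicit mutation sequence to an acyclic quiver or by running the (a priori bounded) search for one and finding none. Finally, for any representatives still not separated, I would compute the number of acyclic quivers in each class outright — starting from one acyclic quiver and closing up under sink/source mutations — and compare. The content to be verified is that, once all this is carried out, the first and sixth rank-$4$ classes are the only pair of distinct same-rank hyperbolic Coxeter simplex move-classes agreeing on every one of these invariants.

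The step I expect to be the main obstacle is confirming the completeness of this case analysis: one must check that no two distinct same-rank hyperbolic Coxeter simplex move-classes agree simultaneously on $\operatorname{rank} B_Q$, on mutation-acyclicity, and on the number of acyclic quivers. If a residual pair of mutation-acyclic classes were to survive these tests, the underlying graph of their acyclic representatives (well defined, as sink/source mutations fix it) supplies a finer invariant; if a residual pair of non-mutation-acyclic classes were to survive, one would have to resort to a further device, such as comparing the minimal mutation-infinite quivers that occur in each class. The assertion of the theorem is exactly that, apart from the stated exception, neither of these situations arises.
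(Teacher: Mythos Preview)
Your approach is essentially the paper's own: reduce to fixed quiver rank, then separate move-classes by $\operatorname{rank} B_Q$ and by the number of acyclic quivers in the mutation class, with the data tabulated in the appendix. The explicit mutation showing $4_6 \sim 4_1$ is also the paper's.

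There is one genuine gap. You write that mutation-acyclicity can be decided ``either by exhibiting an explicit mutation sequence to an acyclic quiver or by running the (a priori bounded) search for one and finding none.'' The second alternative is not available: these classes are mutation-infinite, so there is no a priori bound on the mutation sequences you would have to examine, and a failed finite search proves nothing. This matters because exactly one hyperbolic Coxeter simplex move-class (the rank-$4$ class with zero acyclic quivers in Table~\ref{tab1}) is not mutation-acyclic, and to record its acyclic count as $0$---and thereby distinguish it from the other $B$-rank-$4$ classes in rank $4$---you must \emph{prove} non-mutation-acyclicity. The paper does this separately in Theorem~\ref{thm:rank4_not_acyclic}, via Seven's criterion: it shows that the representative admits no admissible quasi-Cartan companion, whence by \cite[Theorem~1.2]{seven-symmetric} it cannot be mutation-acyclic. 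Your proposal needs some device of this kind; the ``bounded search'' claim as stated is incorrect and leaves that one case unresolved.
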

\begin{proof}
The result follows from comparing the $B$-matrix rank, quiver rank, and number of acyclic seeds in the mutation class. The values of these statistics are given in Table~\ref{tab1} and Table~\ref{tab2} in the appendix. The fourth rank 4 quiver is shown to be not mutation-acyclic in Theorem~\ref{thm:rank4_not_acyclic}.
\end{proof}
\subsection{Admissible quasi-Cartan companion matrices}
While all but one of the Coxeter simplex move-classes are mutation-acyclic, the
double arrow move-classes are not, and therefore these move-classes cannot
belong to the same mutation classes as those of the Coxeter simplex representatives. The proof that a quiver is not
mutation-acyclic relies heavily on the idea of admissible quasi-Cartan
companions introduced by Seven in~\cite{seven-semipos} building on work by
Barot, Geiss and Zelevinsky~\cite{BGZ-cartan}.

\begin{definition}[{\cite[Section 1]{BGZ-cartan}}]
	Let $B = \left( b_{i,j} \right)$ be a skew-symmetric matrix representing a
	quiver $Q$ of rank $n$. A \hdef{quasi-Cartan companion} of $Q$ is a
	symmetric matrix $A = \left( a_{i,j} \right)$ such that $a_{i,i} = 2$ for $i =
	1, \dotsc, n$ and $| a_{i,j} | = | b_{i,j} |$ for $i \neq j$.
\end{definition}

\begin{definition}[{\cite[Section 1]{seven-symmetric}}]
	A \hdef{cycle} $Z$ in a quiver $Q$ is an induced subquiver of $Q$ whose
	vertices can be labelled by elements of $\mathbb{Z}/k\mathbb{Z}$ so that the
	only edges appearing in $Z$ are $\lbrace i, i+1 \rbrace$ for $i \in
	\mathbb{Z}/k\mathbb{Z}$.
\end{definition}

\begin{definition}[{\cite[Definition 2.10]{seven-semipos}}]
	A quasi-Cartan companion $A = \left( a_{i,j} \right)$ of a quiver $Q$ is
	\hdef{admissible} if for any cycle $Z$ in $Q$, if $Z$ is an oriented
	(respectively, non-oriented) cycle then there are an odd (resp., even) number
	of edges $\lbrace i,j \rbrace$ in $Z$ such that $a_{i,j} > 0$.
\end{definition}

A quasi-Cartan companion of a quiver can be thought of as assigning signs
(either~$+$ or~$-$) to the edges of a quiver, determined by whether $a_{i,j} > 0
$ or $< 0$. This labelling of the edges is admissible if the number of $+$'s in
an oriented (resp., non-oriented) cycle is odd (even).

\begin{lemma}[{\cite[Theorem 2.11]{seven-semipos}}]\label{lem:adm-flip}
	If $A$ and $A'$ are two admissible quasi-Cartan companions of a quiver $Q$,
	then they can be obtained from one another by a sequence of simultaneous sign
	changes in rows and columns.
\end{lemma}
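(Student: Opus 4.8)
The plan is to translate the lemma into the language of balanced signed graphs. A simultaneous sign change in row and column $i$ of the symmetric matrix $A$ multiplies $a_{i,j}$ by $-1$ for every $j \ne i$ and fixes the diagonal; thinking of the sign of $a_{i,j}$ as a label $\pm$ on the edge $\{i,j\}$ of $Q$, this operation flips exactly the labels on the edges incident to the vertex $i$. Every cycle $Z$ (in the sense of Seven's definition) through $i$ uses precisely two edges at $i$, so such a flip changes the number of positively labelled edges of $Z$ by $0$ or $2$ and therefore preserves admissibility. Hence the admissible quasi-Cartan companions of $Q$ form a union of orbits under the group of vertex-flips, and the lemma is the assertion that when this set is nonempty it is a single orbit.

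So fix admissible companions $A=(a_{i,j})$ and $A'=(a'_{i,j})$ and put $\varepsilon(e) = \operatorname{sgn}(a_e)\,\operatorname{sgn}(a'_e) \in \{\pm 1\}$ on each edge $e$ of $Q$ (well defined since $|a_e|=|b_e|=|a'_e|\neq 0$). The key point is that $\varepsilon$ is \emph{balanced}: $\prod_{e\in C}\varepsilon(e)=+1$ for every cycle $C$ of the underlying graph. I would first check this for an induced (chordless) cycle $Z$ of length $|Z|$: writing $p$ and $p'$ for the numbers of positively signed edges of $A$ and $A'$ in $Z$, one gets $\prod_{e\in Z}\varepsilon(e)=(-1)^{2|Z|-p-p'}$, and admissibility forces $p\equiv p'\pmod 2$ (both odd if $Z$ is oriented, both even otherwise), so the product is $+1$. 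To promote this to all cycles, one uses the standard fact that the chordless cycles of any graph span its cycle space over $\mathbb{F}_2$: induct on length, splitting a cycle that has a chord into two strictly shorter cycles whose symmetric difference is the original one. Consequently $\varepsilon$, being $+1$ on a spanning set of cycles, is $+1$ on all of them.

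Finally, a balanced $\pm 1$ edge-signing is a coboundary: component by component, choose a spanning tree, define $f\colon Q_0\to\{\pm1\}$ to be $+1$ at a root and propagate so that $\varepsilon(\{i,j\})=f(i)f(j)$ holds along tree edges, and then balance guarantees the same identity on every non-tree edge as well, since its fundamental cycle has $\varepsilon$-product $+1$. Performing the simultaneous row/column sign changes at all vertices $i$ with $f(i)=-1$ multiplies each $a_{i,j}$ by $f(i)f(j)=\varepsilon(\{i,j\})=\operatorname{sgn}(a_{i,j})\operatorname{sgn}(a'_{i,j})$, turning $A$ into $A'$. I expect the only genuine content, and the part needing the most care in writing, to be the transfer of admissibility into the balance condition for $\varepsilon$ together with the claim that chordless cycles span the cycle space; the coboundary/spanning-tree step and the invariance of admissibility under a single flip are routine bookkeeping, and one should separately dispose of the trivial case in which $Q$ has no cycles at all.
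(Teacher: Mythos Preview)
The paper does not actually prove this lemma: it is quoted from Seven's paper (Theorem~2.11 of \cite{seven-semipos}) and used as a black box in the subsequent arguments about admissible companions. So there is no ``paper's own proof'' to compare against.

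That said, your argument is correct and is essentially the standard one. You recast the problem as showing that the edge-signing $\varepsilon(e)=\operatorname{sgn}(a_e)\operatorname{sgn}(a'_e)$ is \emph{balanced} in the sense of signed graph theory, verify balance on Seven's cycles (which are precisely the chordless cycles) via the parity condition in the definition of admissibility, push this to the whole cycle space using the fact that chordless cycles span it over $\mathbb{F}_2$, and then invoke Harary's characterisation of balance as switching-equivalence to the all-$+$ signing. Each step is sound; the only points worth writing carefully are (i) that Seven's ``cycles'' really are the induced (chordless) ones, so balance is only given to you on those, and (ii) the chord-splitting induction showing that they generate the cycle space. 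The acyclic case you flag at the end is already covered by your spanning-tree construction and needs no separate treatment.
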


As a row and column in the matrix corresponds to a vertex in the quiver, this
simultaneous sign change at row and column $k$ is equivalent to flipping the
signs on all edges in the quiver adjacent to vertex $k$.

\begin{theorem}[{\cite[Theorem 1.2]{seven-symmetric}}]\label{thm:adm-mutacyc}
	If $Q$ is a mutation-acyclic quiver, then $Q$ has an admissible quasi-Cartan
	companion.
\end{theorem}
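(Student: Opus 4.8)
The plan is to argue by induction on the length of a mutation sequence taking $Q$ to an acyclic quiver, carrying a companion matrix along the sequence by a mutation rule that keeps it a quasi-Cartan companion and, crucially, keeps it admissible. For the base case, suppose $Q$ itself is acyclic and let $A$ be the companion with $a_{i,i}=2$ and $a_{i,j}=-|b_{i,j}|$ for $i\neq j$, i.e. with every edge labelled $-$. Any cycle $Z$ in $Q$ must be non-oriented, since an oriented induced cycle in $Q$ would contradict acyclicity of $Q$; and $Z$ then contains no edge $\{i,j\}$ with $a_{i,j}>0$, and $0$ is even. Hence $A$ is admissible, and by Lemma~\ref{lem:adm-flip} every admissible companion of $Q$ is obtained from $A$ by simultaneous row/column sign changes.

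For the inductive step, write $Q=\mu_k(P)$ where $P$ is one step closer to acyclic, so by induction $P$ has an admissible companion $A^P=(a^P_{i,j})$. Define $A=(a_{i,j})$ by the companion-mutation rule: $a_{i,i}=2$; $a_{i,k}=-a^P_{i,k}$ and $a_{k,j}=-a^P_{k,j}$ on the $k$-th row and column; and, for $i,j\neq k$,
\[
a_{i,j}=\begin{cases} a^P_{i,j}-\operatorname{sgn}\!\big(a^P_{i,k}a^P_{k,j}\big)\,\big|b^P_{i,k}b^P_{k,j}\big| & \text{if } i\to k\to j \text{ is a 2-path in }P,\\ a^P_{i,j} & \text{otherwise.}\end{cases}
\]
Comparing this with the quiver mutation rule of Definition~\ref{def:mutation} and using $|a^P_{p,q}|=|b^P_{p,q}|$ shows that $|a_{i,j}|=|b_{i,j}|$ for all $i\neq j$, so $A$ is a quasi-Cartan companion of $Q$; using Lemma~\ref{lem:adm-flip} one may also first normalize $A^P$ so that every edge at $k$ is labelled $-$, which makes $\operatorname{sgn}(a^P_{i,k}a^P_{k,j})=+1$ on all 2-paths through $k$ and shortens the bookkeeping below.

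The heart of the proof is that $A$ is admissible. Fix a cycle $Z$ in $Q$. If $k\notin Z$, then either $Z$, with the same edges and orientations, is already an induced cycle of $P$, and the parity condition transfers directly from $A^P$ since the labels on the edges of $Z$ are unchanged; or $Z$ uses at least one arrow $i\to j$ created from a 2-path $i\to k\to j$ of $P$, and replacing each such arrow by the detour through $k$ turns $Z$ into a closed walk in $P$ whose associated cycles pass through $k$ and whose count of $+$-labels one compares with that of $Z$ via the correction term above. If $k\in Z$, then $Z$ has exactly two edges at $k$, to vertices $i$ and $j$ say; these edges survive in $P$ with negated labels, the vertices $i$ and $j$ are joined in $Q$ exactly according to whether $i\to k\to j$ or $j\to k\to i$ is a 2-path of $P$, and $a_{i,j}$ differs from $a^P_{i,j}$ by precisely the correction term recording this. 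Tracking these modifications — two sign flips at $k$ and one correction at the opposite pair $\{i,j\}$ — against whether $Z$ is oriented at $k$ recovers the parity condition for $Z$ from that of the corresponding configuration in $P$. Combining this with the base case proves the theorem.

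The step I expect to be the main obstacle is this admissibility verification, specifically the bookkeeping of arrows created by 2-paths and destroyed by the 2-cycle cancellation in step (3) of mutation: one must use that $Z$ is an \emph{induced} subquiver to pin down exactly which triangles $i\to k\to j$ of $P$ contribute, so that the number of $+$-labels is matched on the nose and not merely modulo $2$. Normalizing $A^P$ as above before beginning, and phrasing the oriented and non-oriented cases as a single parity statement, should keep the number of subcases under control.
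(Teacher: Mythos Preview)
The paper does not give its own proof of this statement: Theorem~\ref{thm:adm-mutacyc} is quoted verbatim from Seven~\cite[Theorem 1.2]{seven-symmetric} and used as a black box in Theorems~\ref{thm:rank4_not_acyclic}, Lemma~\ref{lem:adm-nonacyc}, and Lemma~\ref{lem:adm-nonacyc-2}. So there is nothing in the paper to compare your argument against.

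That said, your outline is the right strategy and is essentially Seven's: start from the all-minus companion of an acyclic quiver and transport it along a mutation sequence by a companion-mutation rule. Two points deserve attention. First, your claim that the mutated matrix $A$ is automatically a quasi-Cartan companion of $\mu_k(P)$ (``comparing with Definition~\ref{def:mutation} and using $|a^P_{p,q}|=|b^P_{p,q}|$'') is not a purely formal consequence of the companion property; even after normalizing the edges at $k$ to $-$, one needs admissibility of $A^P$ on the $3$-cycles through $k$ to force $a^P_{i,j}=-b^P_{i,j}$ whenever there is a $2$-path $i\to k\to j$ and an edge between $i$ and $j$, and only then do the absolute values match after mutation. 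Second, the admissibility check for $A$ that you sketch is the genuine content of the theorem and is where all the work lies: the case analysis on whether $k\in Z$, whether edges of $Z$ are created or destroyed at $k$, and how the induced-subquiver condition interacts with the $2$-cycle cancellation in step~(3) of mutation needs to be carried out in full, not just indicated. Your last paragraph correctly identifies this as the main obstacle, but as written the argument is a plan rather than a proof.
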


We are now ready to show that the fourth rank 4 minimal mutation-infinite quiver is not mutation-acyclic. 

\begin{figure}
\begin{tikzpicture}
\tkzDefPoint(0,0){A}
\tkzDefShiftPoint[A](-30:2.5){B}
\tkzDefShiftPoint[A](90:2.5){C}
\tkzDefShiftPoint[A](210:2.5){D}
\tkzDrawPoints(A,B,C,D)
\tkzLabelPoint[right](B){$1$}
\tkzLabelPoint[below](A){$2$}
\tkzLabelPoint[left](C){$3$}
\tkzLabelPoint[left](D){$4$}
\tkzLabelSegment[above](A,B){$a$};
\tkzLabelSegment[right](A,C){$b$};
\tkzLabelSegment[above](A,D){$c$};
\tkzLabelSegment[above right](B,C){$d$};
\tkzLabelSegment[above](B,D){$e$};
\tkzLabelSegment[above left](D,C){$f$};
\tkzDrawSegments[->, shorten >=5, shorten <=5, >=stealth'](B,D B,A B,C A,D C,A D,C)
\end{tikzpicture}
\caption{Minimal mutation-infinite representative not mutation-equivalent to
any acyclic quiver.}
\label{fig:mut-acyclic-rep}
\end{figure}
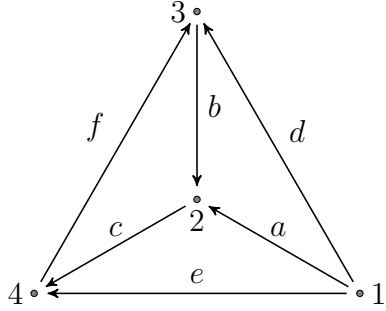
\begin{theorem}\label{thm:rank4_not_acyclic}
The minimal-mutation infinite quiver $Q$ depicted in Figure~\ref{fig:mut-acyclic-rep} does not have any admissible quasi-Cartan companion, and is hence not mutation-acyclic.
\end{theorem}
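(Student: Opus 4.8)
The plan is to show directly that no quasi-Cartan companion of $Q$ can be admissible, by examining the constraints that admissibility places on the signs assigned to the six edges $a,b,c,d,e,f$. First I would enumerate the oriented and non-oriented cycles among the four vertices. With the orientation shown in Figure~\ref{fig:mut-acyclic-rep}, the four triangles on vertex triples $\{1,2,3\}$, $\{1,2,4\}$, $\{1,3,4\}$, $\{2,3,4\}$ are each induced cycles (each pair of these vertices is joined by exactly one edge), so each contributes one parity condition: an oriented triangle forces an odd number of $+$ edges among its three, a non-oriented triangle forces an even number. I would read off from the arrow directions which of the four triangles are oriented and which are not, and then record the resulting four equations over $\mathbb{F}_2$ in the six unknowns $\epsilon_a,\dots,\epsilon_f \in \{0,1\}$ (writing $\epsilon = 0$ for a $+$ sign).

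Next, by Lemma~\ref{lem:adm-flip}, all admissible companions of $Q$ are related by simultaneous row/column sign changes, i.e.\ by flipping all signs on edges incident to a chosen vertex; so it suffices to check whether a single admissible companion exists. I would use this gauge freedom to normalize, say, all three edges incident to vertex $2$ (namely $a$, $b$, $c$) to be $+$, reducing the problem to the three unknowns $\epsilon_d, \epsilon_e, \epsilon_f$ on the edges of the triangle $\{1,3,4\}$. The four triangle-parity equations then become: one equation is purely about $\{1,3,4\}$ and constrains $\epsilon_d + \epsilon_e + \epsilon_f$; the other three each involve vertex $2$ and one edge of the outer triangle together with the (now fixed) incident pair, so each pins down one of $\epsilon_d,\epsilon_e,\epsilon_f$ individually. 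I then check that the value of $\epsilon_d + \epsilon_e + \epsilon_f$ forced by those three equations contradicts the parity demanded by the remaining triangle. That contradiction shows no admissible quasi-Cartan companion exists; by the contrapositive of Theorem~\ref{thm:adm-mutacyc}, $Q$ is not mutation-acyclic.

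The main obstacle is purely bookkeeping: correctly determining from Figure~\ref{fig:mut-acyclic-rep} which triangles are oriented — this depends on the exact arrow directions ($1\to 2$, $1\to 3$, $1\to 4$, $2\to 3$, $2\to 4$ or $4\to 2$, $3\to 4$ or $4\to 3$, etc.) — and making sure that each triple really is an \emph{induced} cycle (no chords, no doubled edges) so that Seven's admissibility condition applies verbatim. Once the four $\mathbb{F}_2$-equations are written down correctly, the inconsistency is immediate; I would also double-check that the quiver has no larger (4-vertex) induced cycle that would add a fifth constraint, but since every pair of vertices is adjacent there is no chordless 4-cycle, so only the four triangles matter. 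It remains to note that $Q$ is indeed minimal mutation-infinite, which is quoted from the classification in~\cite{lawson}.
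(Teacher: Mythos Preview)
Your proposal is correct and follows essentially the same approach as the paper: both exploit Lemma~\ref{lem:adm-flip} to normalize three of the six edge-signs, then show that the parity constraints imposed by the four triangles on the remaining three signs are inconsistent, and conclude via Theorem~\ref{thm:adm-mutacyc}. The only cosmetic difference is the choice of normalization: the paper first fixes the sign on $f$ using the oriented triangle $(2,3,4)$ and then normalizes $b,c$, whereas you propose to fix all three edges $a,b,c$ incident to vertex~$2$; either way the contradiction appears in the triangle $(1,3,4)$.
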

\begin{proof}
	Assume the edges and vertices of $Q$ are labelled as in
	Figure~\ref{fig:mut-acyclic-rep}, and that the quiver has an admissible
	quasi-Cartan companion. We now use Lemma~\ref{lem:adm-flip} to canonically
	label the edges of $Q$, up to flipping the signs at vertices.
	
	The triangle $(2,3,4)$ in $Q$ is oriented, so must have an odd number of edges
	labelled~$+$, in particular it must have at least one~$+$. By flipping
	vertices $3$ and $4$ we can ensure that the label of $f$ is~$+$.  Then, either
	$b$ and $c$ are both~$+$ or are both~$-$, and by flipping $2$ we can choose
	them to be~$+$. So far we have $a,b$ and $c$ are all~$+$ and we have fixed or
	flipped vertices $1, 2$ and $5$.

	Now the two non-oriented triangles $(1, 2, 4)$ and $(1, 2, 3)$ require an even
	number of~$+$'s, while they each already have at least one. Hence either $d$ and $e$ are both~$+$ and $a$ is~$-$ or $d$ and $e$ are both~$-$ and $a$ is~$+$. By flipping
	1 we can choose the former. We have now fixed or
	flipped all vertices and have assigned the only possible labels to all the
	edges, up to flipping the signs at vertices.

	However we now have a non-oriented cycle $(1,3,4)$ which contains three~$+$'s, 				which gives a contradiction as a non-oriented cycle must have an even
	number of positive edges in an admissible companion. Hence $Q$ cannot admit an
	admissible quasi-Cartan companion and so by Theorem~\ref{thm:adm-mutacyc}
	cannot be mutation-acyclic.
\end{proof}
\subsection{Double arrow minimal mutation-infinite quivers.}
 We use a similar approach to show that the move-classes with double arrow representatives are not mutation-acyclic. It then follows that the mutation classes of double arrow representatives are distinct from the mutation classes of the minimal mutation-infinite quivers with hyperbolic Coxeter representatives. 
\begin{lemma}[{\cite[Corollary 5.3]{BMR-clustermut}}]\label{lem:sub-acyc}
	If $Q$ is mutation-acyclic, then any induced subquiver of $Q$ is also
	mutation-acyclic.
\end{lemma}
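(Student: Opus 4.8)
The plan is to reduce to deleting a single vertex and then to induct on the distance from $Q$ to the nearest acyclic quiver in the exchange graph. Since every induced subquiver is obtained by deleting vertices one at a time, it suffices, by iteration, to prove the following: if $Q$ is mutation-acyclic and $v\in Q_0$, then $Q[W]$ is mutation-acyclic, where $W=Q_0\setminus\{v\}$. The one elementary fact I would use repeatedly is that restriction commutes with mutation at any vertex $w\neq v$: for any quiver $P$ on the vertex set $Q_0$ one has $\mu_w(P)[W]=\mu_w\bigl(P[W]\bigr)$, since the arrows that $\mu_w$ creates among the vertices of $W$ come only from the $2$-paths $i\to w\to j$ with $i,j\in W$, while all arrows incident to $v$ are discarded by the restriction. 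In particular, if $Q=\mu_w(Q')$ with $w\neq v$ and $Q'[W]$ is already known to be mutation-acyclic, then so is $Q[W]=\mu_w\bigl(Q'[W]\bigr)$.

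I would then let $d(Q)$ be the distance from $Q$ to the nearest acyclic quiver in the unlabelled exchange graph (finite, since $Q$ is mutation-acyclic) and induct on $d(Q)$. If $d(Q)=0$ then $Q$ is itself acyclic, hence so is the induced subquiver $Q[W]$, and we are done. If $d(Q)\geq 1$, choose $w$ with $d\bigl(\mu_w(Q)\bigr)=d(Q)-1$ and set $Q'=\mu_w(Q)$, so that $d(Q')<d(Q)$ and $Q=\mu_w(Q')$. If one can take $w\neq v$, the commutation identity above together with the inductive hypothesis applied to $Q'$ finishes the proof at once. The remaining, genuinely non-formal case is when the distance to an acyclic quiver can only be lowered by mutating at $v$ itself: then $Q=\mu_v(Q')$ with $Q'$ mutation-acyclic and $d(Q')<d(Q)$, so by induction $Q'[W]$ is mutation-acyclic, but $Q[W]=\mu_v(Q')[W]$ differs from $Q'[W]$ by the ``shortcut'' arrows $a\to b$ coming from each $2$-path $a\to v\to b$ of $Q'$, and one must show these can be removed again by a further sequence of mutations supported on $W$.

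This last case is the heart of the lemma, and where I expect the real difficulty. My approach would be to pass to the representation-theoretic model underlying \cite{BMR-clustermut}, in which mutation of a mutation-acyclic quiver is realised by reflection functors and tilting in the cluster category $\mathcal{C}$ of a hereditary algebra in its mutation class: realise $Q'$ as the quiver of $\operatorname{End}_{\mathcal{C}}(T)$ for a cluster-tilting object $T=\bigoplus_i T_i$, so that $\mu_v$ is the exchange of the summand $T_v$ and passage to $Q[W]$ corresponds to dropping $T_v$ and working with the partial (rigid) object $\bigoplus_{i\in W}T_i$. Completing that object to a cluster-tilting object and tracking irreducible maps through the completion should exhibit $Q[W]$ as mutation-equivalent to an acyclic quiver. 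I expect the main obstacle to be precisely this homological bookkeeping --- reconciling the combinatorics of the shortcut arrows with minimal approximations in $\mathcal{C}$ and the $2$-Calabi--Yau property --- while the reductions of the first two paragraphs are routine.
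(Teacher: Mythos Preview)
The paper does not give its own proof of this lemma; it is quoted verbatim from \cite[Corollary~5.3]{BMR-clustermut} and used as a black box. So there is no in-paper argument to compare your proposal against.

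On the merits of the proposal itself: your reductions --- to deleting a single vertex, and to inducting on the distance $d(Q)$ to an acyclic quiver --- are correct and standard, and the commutation identity $\mu_w(P)[W]=\mu_w(P[W])$ for $w\neq v$ is fine. But, as you explicitly acknowledge, the case where the only distance-decreasing mutation is at the deleted vertex $v$ is not proved; you only outline a program (``completing that object to a cluster-tilting object and tracking irreducible maps \ldots\ \emph{should} exhibit $Q[W]$ as mutation-equivalent to an acyclic quiver'') and flag the bookkeeping as the expected obstacle. That is a genuine gap: what you have written is a partial reduction plus a plan, not a proof.

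The categorical route you gesture toward is indeed the one taken in \cite{BMR-clustermut}, but there it is not organised as an induction on $d(Q)$ with a residual hard case. The result there is a corollary of their main structural theorems relating cluster-tilted algebras to tilted algebras over a hereditary algebra: once one knows that every quiver in the mutation class arises as the Gabriel quiver of an endomorphism algebra of a cluster-tilting object in the cluster category of an acyclic quiver, deletion of a vertex corresponds to dropping an indecomposable summand, and the resulting algebra is again cluster-tilted of acyclic type. If you want to complete your argument you will effectively have to import that machinery, at which point the inductive scaffolding becomes redundant.
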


\begin{figure}
	\begin{tikzpicture}[>=stealth',shorten <=5,shorten >=5]
		\def\edgesize{4}
		\tkzDefPoint(0,0){A};
		\tkzDefPoint(\edgesize,0){B};
		\tkzDefPoint(\edgesize,\edgesize){C};
		\tkzDefPoint(0,\edgesize){D};
		\tkzDefPoint(.5*\edgesize,.5*\edgesize){E};
		\tkzFillPolygon[color=gray!15](A,D,E);
		\tkzFillPolygon[color=gray!15](B,C,E);
		\tkzDrawPoints(A,B,C,D,E)
		\tkzLabelPoint[above left](D){$1$};
		\tkzLabelPoint[above right](C){$2$};
		\tkzLabelPoint[below right](B){$3$};
		\tkzLabelPoint[below left](A){$4$};
		\tkzLabelPoint[right=.2 of E](E){$5$};
		\tkzDrawSegments[->](A,B B,C C,D D,A D,E E,A B,E E,C)
		\tkzLabelSegment[above](D,C){$a$};
		\tkzLabelSegment[above](D,E){$b$};
		\tkzLabelSegment[above](E,C){$c$};
		\tkzLabelSegment[left](D,A){$d$};
		\tkzLabelSegment[right](C,B){$e$};
		\tkzLabelSegment[below](E,A){$f$};
		\tkzLabelSegment[below](E,B){$g$};
		\tkzLabelSegment[below](A,B){$h$};
	\end{tikzpicture}
	\caption{Quiver which does not have an admissible quasi-Cartan companion. The
	vertices are labelled $1, \dotsc, 5$ and the edges $a, \dotsc, h$.
	Non-oriented cycles are shaded gray.}
	\label{fig:non-admissible}
\end{figure}
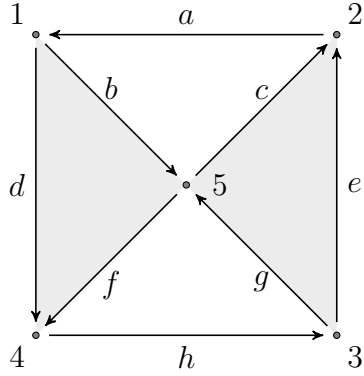

\begin{lemma}\label{lem:adm-nonacyc}
	The quiver $Q$ depicted in Figure~\ref{fig:non-admissible} does not have any
	admissible quasi-Cartan companion, and is hence not mutation-acyclic.
\end{lemma}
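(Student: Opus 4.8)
The plan is to mimic the argument of Theorem~\ref{thm:rank4_not_acyclic}: assume $Q$ has an admissible quasi-Cartan companion, use Lemma~\ref{lem:adm-flip} to normalise the sign labelling on the edges $a,\dots,h$ up to flipping signs at vertices, and then exhibit a cycle whose parity of positive edges violates the admissibility condition. First I would record which cycles of $Q$ are oriented and which are not. From Figure~\ref{fig:non-admissible} the relevant small cycles are the two shaded triangles $(1,4,5)$ with edges $d,f,b$ and $(2,3,5)$ with edges $e,g,c$ (these are the non-oriented ones), the two triangles $(1,2,5)$ with edges $a,b,c$ and $(3,4,5)$ with edges $f,g,h$, and also the $4$-cycle $(1,2,3,4)$ with edges $a,e,h,d$; I would check the orientations carefully against the arrow directions drawn in the figure (for instance whether $(1,2,5)$ is oriented depends on the directions of $a$, $b=D\to E$ and $c=E\to C$). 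An admissible companion assigns $+/-$ to each of $a,\dots,h$ so that every oriented cycle has an odd number of $+$'s and every non-oriented cycle an even number.

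Next I would use the five vertex-flips to fix a normal form. As in the previous proof, there are $5$ vertices, hence $5$ independent sign flips, and $8$ edges, so after normalisation there is essentially a $3$-parameter family of putative labellings to rule out; the point is that the constraints coming from four or five independent cycles will over-determine these parameters. Concretely I would: flip vertices to make, say, $b$, $c$ and one more edge positive; then read off from the oriented-triangle conditions on $(1,2,5)$ and $(3,4,5)$ the forced signs of $a$ and of $h$ (relative to $f,g$); then read off from the non-oriented-triangle conditions on $(1,4,5)$ and $(2,3,5)$ the forced signs of $d,f$ and $e,g$; at that stage every edge is determined, and I would feed the resulting signs into the remaining cycle — the $4$-cycle $(1,2,3,4)$, or possibly the non-oriented triangle/quadrilateral not yet used — and obtain a parity contradiction. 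Then Theorem~\ref{thm:adm-mutacyc} gives that $Q$ is not mutation-acyclic, which is the claim.

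The main obstacle is bookkeeping rather than conceptual: I must get the edge orientations in Figure~\ref{fig:non-admissible} exactly right (the \texttt{tkzDrawSegments[->]} list is \texttt{A,B  B,C  C,D  D,A  D,E  E,A  B,E  E,C}, so $h=4\!\to\!3$, $e=3\!\to\!2$, $a=2\!\to\!1$, $d=1\!\to\!4$, $b=1\!\to\!5$, $f=5\!\to\!4$, $g=3\!\to\!5$, $c=5\!\to\!2$), classify each cycle as oriented or not from these, and then make sure the chosen sequence of vertex-flips really does reduce to a unique candidate labelling — i.e. that the cycles I use to pin down the signs are independent and exhaust the edges. A secondary subtlety is that $Q$ has genuine $4$-cycles, not just triangles, so I should be slightly careful that the cycle I use for the final contradiction is an \emph{induced} subquiver in the sense of the definition of ``cycle''; if the obvious $4$-cycle fails to be induced I would instead locate the contradiction among the induced triangles, exactly as in the proof of Theorem~\ref{thm:rank4_not_acyclic}, where the induced triangle $(1,3,4)$ ended up carrying three $+$'s.
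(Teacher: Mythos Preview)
Your proposal is correct and is essentially the paper's own argument: normalise signs using the oriented triangle $(1,2,5)$ and vertex flips to force $a,b,c=+$, use the two shaded non-oriented triangles $(1,4,5)$ and $(2,3,5)$ together with flips at $3,4$ to force $f,g=+$ and $d,e=-$, then the oriented triangle $(3,4,5)$ forces $h=+$, and the induced oriented $4$-cycle $(1,2,3,4)$ (there are no diagonals $1$--$3$ or $2$--$4$, so it is indeed induced) ends up with exactly two $+$'s, contradicting admissibility. Your edge-direction reading from the \texttt{tkzDrawSegments} list is accurate, and your caution about the $4$-cycle being induced is well placed but resolves in your favour.
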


\begin{proof}
	Assume the edges and vertices of $Q$ are labelled as in
	Figure~\ref{fig:non-admissible}, and that the quiver has an admissible
	quasi-Cartan companion. We now use Lemma~\ref{lem:adm-flip} to canonically
	label the edges of $Q$, up to flipping the signs at vertices.
	
	The triangle $(1,5,2)$ in $Q$ is oriented, so must have an odd number of edges
	labelled~$+$, in particular it must have at least one~$+$. By flipping
	vertices $1$ and $2$ we can ensure that the label of $a$ is~$+$.  Then, either
	$b$ and $c$ are both~$+$ or are both~$-$, and by flipping $5$ we can choose
	them to be~$+$. So far we have $a,b$ and $c$ are all~$+$ and we have fixed or
	flipped vertices $1, 2$ and $5$.

	Now the two non-oriented triangles $(1, 4, 5)$ and $(2, 3, 5)$ require an even
	number of~$+$'s, while they each already have at least one. Hence one of $d$
	or $f$ is~$+$ and the other is~$-$, and the same for $e$ and $g$. By flipping
	$4$ and $3$ we can choose that $f$ and $g$ are~$+$ while $d$ and $e$ are~$-$.
	This leaves the oriented triangle $(3, 5, 4)$ with two~$+$'s, but it requires
	an odd number of positives, so $h$ must also be $+$. We have now fixed or
	flipped all vertices and have assigned the only possible labels to all the
	edges, up to flipping the signs at vertices.

	However we now have an oriented cycle $(1,2,3,4)$ which contains two~$+$'s and
	two~$-$'s, which gives a contradiction as an oriented cycle must have an odd
	number of positive edges in an admissible companion. Hence $Q$ cannot admit an
	admissible quasi-Cartan companion and so by Theorem~\ref{thm:adm-mutacyc}
	cannot be mutation-acyclic.
\end{proof}

\begin{lemma}\label{lem:da-mutacyc}
	Each double arrow representative is mutation-equivalent to a quiver containing
	the quiver depicted in Figure~\ref{fig:non-admissible} as an induced
	subquiver. Hence each double arrow move-class is not mutation-acyclic.
\end{lemma}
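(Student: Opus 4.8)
The plan is to observe that the second sentence (``Hence\dots'') follows formally from the first together with results already available, so that the whole content is the first sentence. Write $P$ for the five-vertex quiver of Figure~\ref{fig:non-admissible}. By Lemma~\ref{lem:adm-nonacyc}, $P$ has no admissible quasi-Cartan companion and hence is not mutation-acyclic. By the contrapositive of Lemma~\ref{lem:sub-acyc}, any quiver possessing $P$ as an induced subquiver fails to be mutation-acyclic as well. Finally, being mutation-acyclic is by definition constant on a mutation class, and each move-class is contained in a single mutation class; so if one double arrow representative $Q$ is mutation-equivalent to a quiver containing $P$ as an induced subquiver, then $Q$, and with it the whole move-class of $Q$, is not mutation-acyclic. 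Thus it remains only to establish the first sentence: for each of the finitely many double arrow representatives $Q$ of Figure~\ref{fig:dareps} there is a $Q' \in \mut(Q)$ having $P$ as an induced subquiver.

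For this I would proceed one representative at a time by explicit mutation, keeping the computations short via the following locality observation. If $V \subseteq Q_0$ and $\mu_{k_r}\circ\cdots\circ\mu_{k_1}$ is a mutation sequence with every $k_j \in V$, then
\[
	\big(\mu_{k_r}\circ\cdots\circ\mu_{k_1}(Q)\big)[V] \;=\; \mu_{k_r}\circ\cdots\circ\mu_{k_1}\big(Q[V]\big),
\]
since the arrows among the vertices of $V$ produced by a mutation at a vertex of $V$ are determined by the arrows among the vertices of $V$ alone (new arrows come from $2$-paths through the mutated vertex, reversal affects only arrows at that vertex, and the $2$-cycle cancellations among $V$ depend only on the arrow counts among $V$). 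Consequently, for each representative $Q$ it is enough to pick a small subset $V$ — a neighbourhood of the double edge, of size $5$, or $6$ when convenient — and a mutation sequence supported on $V$ taking $Q[V]$ to a quiver that contains $P$ as an induced subquiver; the vertices outside $V$ are simply discarded on passing to the induced subquiver. Each such verification is then a bounded finite computation.

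The main obstacle is entirely organizational: there is no single mutation sequence valid for all the double arrow representatives, because they have different ranks and different tree-like tails, so the check has to be carried out class by class — the kind of finite task best confirmed with a quiver-mutation computer package. Two economies are worth exploiting. First, a larger double arrow representative often contains a smaller one as an induced subquiver, so by Lemma~\ref{lem:sub-acyc} one only needs to treat the minimal members of each family directly and then inherit the conclusion. Second, since $P$ has five vertices this argument applies only in rank at least $5$; any double arrow representative of rank $4$, should one appear in Figure~\ref{fig:dareps}, must instead be shown non-mutation-acyclic by the admissible quasi-Cartan companion obstruction directly, exactly as in the proof of Theorem~\ref{thm:rank4_not_acyclic}.
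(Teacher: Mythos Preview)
Your approach is correct and matches the paper's: reduce the ``Hence'' clause to Lemmas~\ref{lem:adm-nonacyc} and~\ref{lem:sub-acyc}, then establish the first sentence by exhibiting, for each double arrow representative, an explicit mutation sequence yielding a quiver with the five-vertex quiver of Figure~\ref{fig:non-admissible} as an induced subquiver. The paper carries this out directly by displaying the required sequences in Figure~\ref{fig:da-adm-seq}, one per representative; since all double arrow representatives have rank at least~$6$, your rank-$4$ caveat is vacuous, and the paper does not invoke your locality or inheritance economies, simply treating each case in full.
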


\begin{figure}
	\includegraphics{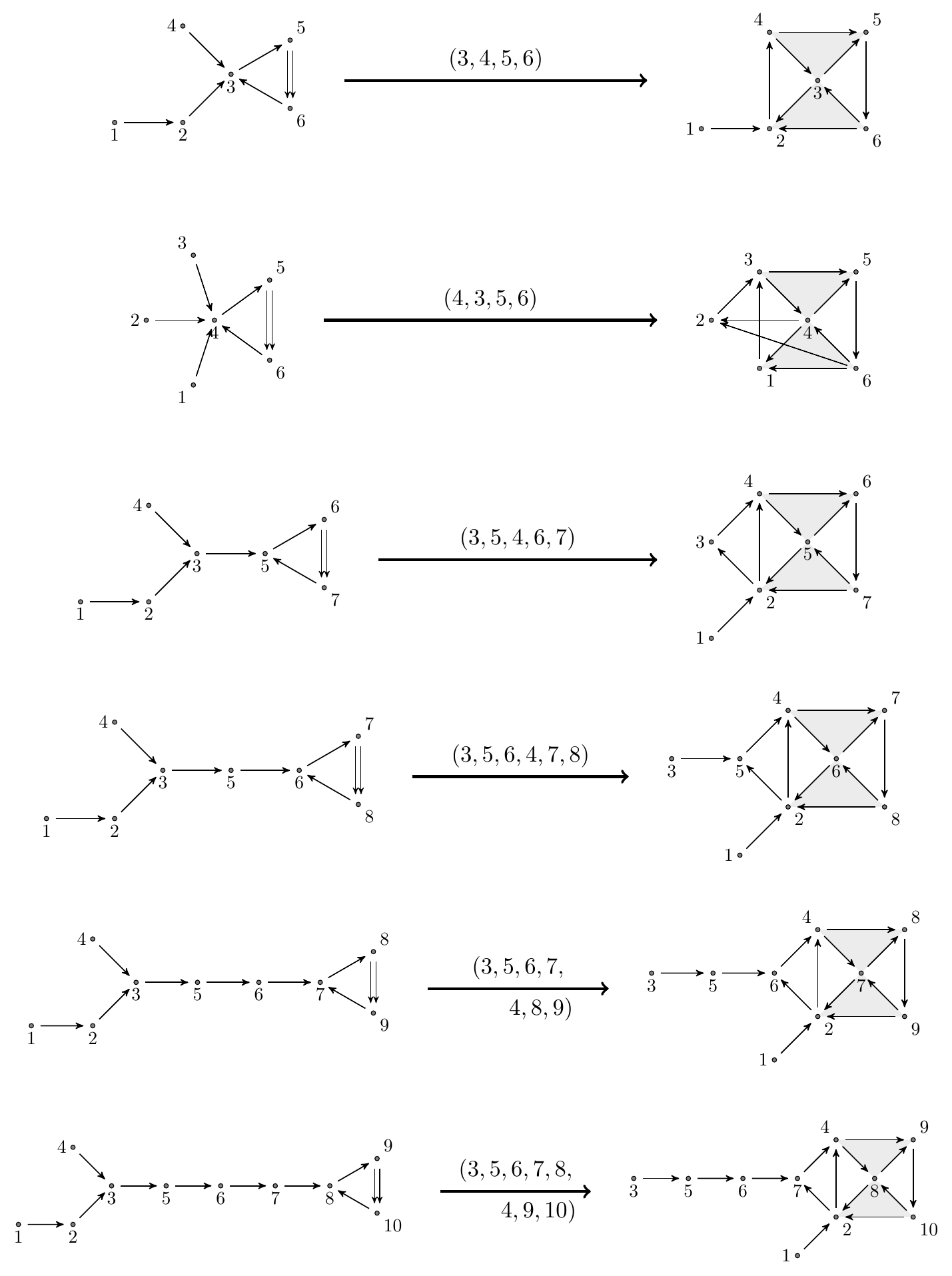}
	\caption{Mutation sequences for each double arrow representative shown in
	Figure~\ref{fig:dareps}. The resulting quiver contains the quiver shown in
	Figure~\ref{fig:non-admissible} which is not mutation-acyclic, so each double
	arrow representative is itself not mutation-acyclic.}
	\label{fig:da-adm-seq}
\end{figure}

\begin{proof}
	Figure~\ref{fig:da-adm-seq} shows a mutation sequence for each of the double
	arrow representatives which results in a quiver containing the quiver shown in
	Figure~\ref{fig:non-admissible} as an induced subquiver.
	Lemma~\ref{lem:adm-nonacyc} shows that this subquiver is not mutation-acyclic,
	so by Lemma~\ref{lem:sub-acyc} each full quiver cannot be mutation-acyclic.
	Hence each double arrow representative is mutation-equivalent to a quiver
	which is not mutation-acyclic, so are themselves not mutation-acyclic.
\end{proof}

\begin{theorem}\label{thm:diff_class}
	With the single exception given in Theorem~\ref{thm:hcs_diff_class} the
	move-classes of minimal mutation-infinite quivers with hyperbolic Coxeter
	simplex diagram representatives or double arrow representatives all belong to
	distinct mutation classes. 
\end{theorem}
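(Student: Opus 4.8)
The plan is to establish the claim in three parts, according to whether we are comparing two hyperbolic Coxeter simplex (HCS) move-classes, two double arrow (DA) move-classes, or one of each. Throughout I use three mutation invariants: the quiver rank, the rank of the $B$-matrix (invariant by \cite[Lemma~3.2]{BFZ}), and whether or not the quiver is mutation-acyclic, which is obviously preserved by mutation.

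The HCS versus HCS case is exactly Theorem~\ref{thm:hcs_diff_class}, which already gives that those move-classes are pairwise distinct apart from the stated exception. For the HCS versus DA case, observe that by Lemma~\ref{lem:da-mutacyc} no DA move-class is mutation-acyclic, whereas the appendix tables used in the proof of Theorem~\ref{thm:hcs_diff_class} record a nonzero number of acyclic seeds for every HCS move-class except that of the fourth rank 4 representative of Figure~\ref{fig:hcsreps}; hence every HCS move-class other than this one is mutation-acyclic and cannot coincide with any DA move-class. The fourth rank 4 HCS representative is also not mutation-acyclic, by Theorem~\ref{thm:rank4_not_acyclic}, so to finish this case I would compare its quiver rank, which is $4$, and its $B$-matrix rank with the quiver ranks and $B$-matrix ranks of the DA representatives recorded in the appendix, and check that no DA representative matches both.

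For the DA versus DA case, since all DA move-classes are non-mutation-acyclic the acyclic-seed count is identically zero and so carries no information; I would therefore distinguish these move-classes using only the quiver rank and the $B$-matrix rank, reading these two numbers off the appendix table for each DA representative and verifying that no two representatives agree on both. Combining the three cases proves the theorem.

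The step I expect to be the main obstacle is exactly this last verification, together with the corresponding part of the HCS-versus-DA case: if two DA representatives were to share both their rank and their $B$-matrix rank, these two invariants would no longer separate the corresponding mutation classes and a finer invariant would be required. A natural fallback would be to exploit Lemma~\ref{lem:sub-acyc}, looking for an induced subquiver of one of the quivers, or of a quiver mutation-equivalent to it, that fails to be mutation-acyclic, thereby obstructing the two move-classes from lying in the same mutation class while the other quiver has all induced subquivers mutation-acyclic in every mutation-equivalent quiver. I expect, however, that the list of DA representatives from \cite{lawson} is short enough and spread across enough ranks that the pair consisting of the quiver rank and the $B$-matrix rank already separates them, so that this fallback will not be needed.
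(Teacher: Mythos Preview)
Your proposal is correct and follows essentially the same strategy as the paper: invoke Theorem~\ref{thm:hcs_diff_class} for the HCS--HCS comparison, use Lemma~\ref{lem:da-mutacyc} (non-mutation-acyclicity of the DA classes) against the mutation-acyclicity of the HCS classes for the mixed comparison, and separate the DA classes among themselves by quiver rank and $B$-matrix rank. The paper streamlines the HCS--DA step slightly: since every double arrow representative has rank at least $6$, one only needs to compare against HCS classes of rank $\ge 5$, all of which are mutation-acyclic, so the non-mutation-acyclic rank-$4$ HCS class never enters the picture and your separate treatment of it is unnecessary. Your anticipated DA--DA verification does go through without the fallback: the DA representatives occur in ranks $6,6,7,8,9,10$, and Table~\ref{tab3} shows the two rank-$6$ classes have $B$-matrix ranks $6$ and $4$ respectively.
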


\begin{proof}
	Theorem~\ref{thm:hcs_diff_class} shows that all Coxeter simplex move-classes
	of rank greater than $4$ belong to distinct mutation classes, and these
	mutation classes contain acyclic quivers. The double arrow representatives all
	have rank $> 4$ and Lemma~\ref{lem:da-mutacyc} shows that all the double arrow
	move-classes are not mutation-acyclic, and therefore belong to different
	mutation classes to any of the Coxeter simplex classes. Furthermore, all of the double arrow move-classes have a different number of vertices except for the two move-classes of rank 6. However, as shown in Table~\ref{tab3} in the appendix these move-classes have different $B$-matrix rank so they cannot be mutation equivalent. 
\end{proof}
\subsection{Exceptional minimal mutation-infinite quivers.}
We now repeat the argument used for double arrow move-classes to the show that the mutation classes of the minimal mutation-infinite quivers are not mutation-acyclic. This allows us to distinguish the exceptional type classes from the classes whose representative is an orientation of a hyperbolic Coxeter diagram, but it does nothing to distinguish them from the double arrow representatives. We were unable to find an invariant that differentiates the mutation-classes of the double arrow representatives and the exceptional representatives. We were also unable to distinguish some of the exceptional move-classes from other exceptional move-classes of the same rank.
\begin{figure}
		\begin{tikzpicture}[>=stealth',shorten <=5,shorten >=5]
		\def\edgesize{4}
		\tkzDefPoint(0,0){A};
		\tkzDefPoint(\edgesize,0){B};
		\tkzDefPoint(\edgesize,\edgesize){C};
		\tkzDefPoint(0,\edgesize){D};
		\tkzDefPoint(.5*\edgesize,.5*\edgesize){E};
		\tkzFillPolygon[color=gray!15](B,C,E);
		\tkzDrawPoints(A,B,C,D,E)
		\tkzLabelPoint[above left](D){$1$};
		\tkzLabelPoint[above right](C){$2$};
		\tkzLabelPoint[below right](B){$3$};
		\tkzLabelPoint[below left](A){$4$};
		\tkzLabelPoint[right=.2 of E](E){$5$};
		\tkzDrawSegments[->](B,A A,E E,B D,A E,D D,C C,E C,B)
		\tkzLabelSegment[above](D,C){$a$};
		\tkzLabelSegment[above](D,E){$b$};
		\tkzLabelSegment[above](E,C){$c$};
		\tkzLabelSegment[left](D,A){$d$};
		\tkzLabelSegment[right](C,B){$e$};
		\tkzLabelSegment[below](E,A){$f$};
		\tkzLabelSegment[below](E,B){$g$};
		\tkzLabelSegment[below](A,B){$h$};
	\end{tikzpicture}
	\caption{Quiver which does not have an admissible quasi-Cartan companion. The
	vertices are labelled $1, \dotsc, 5$ and the edges $a, \dotsc, h$.
	The non-oriented cycle is shaded gray.}
	\label{fig:non-admissible2}
	\end{figure}
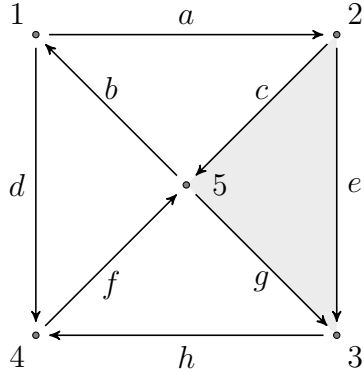
\begin{lemma}\label{lem:adm-nonacyc-2}
	The quiver $Q$ depicted in Figure~\ref{fig:non-admissible2} does not have any
	admissible quasi-Cartan companion, and is hence not mutation-acyclic.
\end{lemma}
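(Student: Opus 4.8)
The plan is to mimic, almost verbatim, the argument of Lemma~\ref{lem:adm-nonacyc}. Suppose, for a contradiction, that the quiver $Q$ of Figure~\ref{fig:non-admissible2} admits an admissible quasi-Cartan companion; equivalently, that we may assign a sign $+$ or $-$ to each edge $a,\dots,h$ so that every induced cycle of $Q$ carries an odd (resp.\ even) number of $+$ edges when it is oriented (resp.\ non-oriented). By Lemma~\ref{lem:adm-flip} any two such sign assignments differ by a sequence of sign changes performed simultaneously on all edges incident to a single vertex, and every cycle condition is preserved under such a flip, so we are free to normalise the signs by flipping at vertices. The induced cycles we need are the three oriented triangles $(1,2,5)$, $(1,4,5)$, $(3,4,5)$, the non-oriented triangle $(2,3,5)$ (the one shaded in the figure), and the non-oriented $4$-cycle on $\{1,2,3,4\}$ with edge set $\{a,e,h,d\}$.

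I would then run the normalisation step by step, exactly as in the model proof. Since $(1,2,5)$ is oriented it has at least one $+$ edge, and after flipping at vertex $1$ or $2$ we may assume $a=+$; then $\{b,c\}$ carries an even number of $+$'s, so $b$ and $c$ have a common sign, and flipping at vertex $5$ makes $b=c=+$. Next, $(1,4,5)$ is oriented with $b=+$, so $\{d,f\}$ carries an even number of $+$'s, and flipping at vertex $4$ makes $d=f=+$. Meanwhile $(2,3,5)$ is non-oriented with $c=+$, so exactly one of $e,g$ is $+$, and flipping at vertex $3$ makes $g=+$ and $e=-$. Finally $(3,4,5)$ is oriented with $f=g=+$, forcing $h=+$. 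As in Lemma~\ref{lem:adm-nonacyc}, each of these flips leaves the previously normalised edges untouched, so every edge of $Q$ is now pinned down: $a=b=c=d=f=g=h=+$ and $e=-$. But then the non-oriented $4$-cycle on $\{1,2,3,4\}$ has three $+$ edges, namely $a,d,h$, which contradicts admissibility. Hence $Q$ has no admissible quasi-Cartan companion, and by Theorem~\ref{thm:adm-mutacyc} it is not mutation-acyclic.

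I do not expect a genuine obstacle here: the argument is pure bookkeeping, parallel to Lemma~\ref{lem:adm-nonacyc}. The one point that needs care is the enumeration of the induced cycles of $Q$ — the figure shades only the non-oriented triangle, whereas the contradiction is produced by the non-oriented $4$-cycle $\{1,2,3,4\}$, and one must also observe that each of the other four $4$-element vertex subsets spans five edges and so is not an induced cycle, imposing no further condition. As an independent check one can bypass the normalisation altogether: writing $p_x\in\mathbb{F}_2$ for the sign of edge $x$ (with $1$ meaning $+$), the five cycles above translate into $p_a+p_b+p_c=1$, $p_b+p_d+p_f=1$, $p_c+p_e+p_g=0$, $p_f+p_g+p_h=1$ and $p_a+p_d+p_e+p_h=0$; every variable $a,\dots,h$ occurs in exactly two of these equations, so summing all five yields $0=1$, the desired contradiction.
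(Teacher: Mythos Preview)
Your argument is correct and follows the paper's strategy: normalise edge signs via vertex flips (Lemma~\ref{lem:adm-flip}) and then exhibit a cycle whose parity fails, in both cases the non-oriented $4$-cycle $(1,2,3,4)$. The only difference is the order of normalisation---the paper begins at the non-oriented triangle $(2,3,5)$ and sets $c=e=g=-$, whereas you begin at the oriented triangle $(1,2,5)$ and set $a=b=c=+$---which is purely cosmetic. Your closing $\mathbb{F}_2$ check, however, is a genuine addition not in the paper: since each edge lies in exactly two of the five induced cycles, summing the five parity equations collapses the left side to $0$ while the right side is $1+1+0+1+0=1$, giving the contradiction in one line without any normalisation.
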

\begin{proof}
	Assume the edges and vertices of $Q$ are labelled as in
	Figure~\ref{fig:non-admissible2}, and that the quiver has an admissible
	quasi-Cartan companion. We now use Lemma~\ref{lem:adm-flip} to canonically
	label the edges of $Q$, up to flipping the signs at vertices.
	
	The triangle $(2,3,5)$ in $Q$ is non-oriented, so must have an even number of edges
	labelled~$+$, in particular it must have either one~$+$ or three~$+$'s. By flipping or 		fixing vertices $2,3,$ and $5$ we can ensure that all of the labels $c,e,$ and $g$ are~$-$.

	Now the oriented triangles $(1, 2, 5)$ and $(3, 4, 5)$ require an odd
	number of~$+$'s, while they each already have one~$-$. Hence one of $a$
	or $b$ is~$+$ and the other is~$-$, and the same for $h$ and $f$. By flipping
	$1$ and $4$ we can choose that $a$ and $h$ are~$+$ while $b$ and $f$ are~$-$.
	This leaves the oriented triangle $(1, 4, 5)$ with $d$ labelled by a~$+$ since it 	requires
	an odd number of positives. We have now fixed or
	flipped all vertices and have assigned the only possible labels to all the
	edges, up to flipping the signs at vertices.

	However we now have a non-oriented cycle $(1,2,3,4)$ which contains three~$+$'s and
	one~$-$, which gives a contradiction as a non-oriented cycle must have an odd
	number of positive edges in an admissible companion. Hence $Q$ cannot admit an
	admissible quasi-Cartan companion and so by Theorem~\ref{thm:adm-mutacyc}
	cannot be mutation-acyclic.
\end{proof}
\begin{figure}
\includegraphics{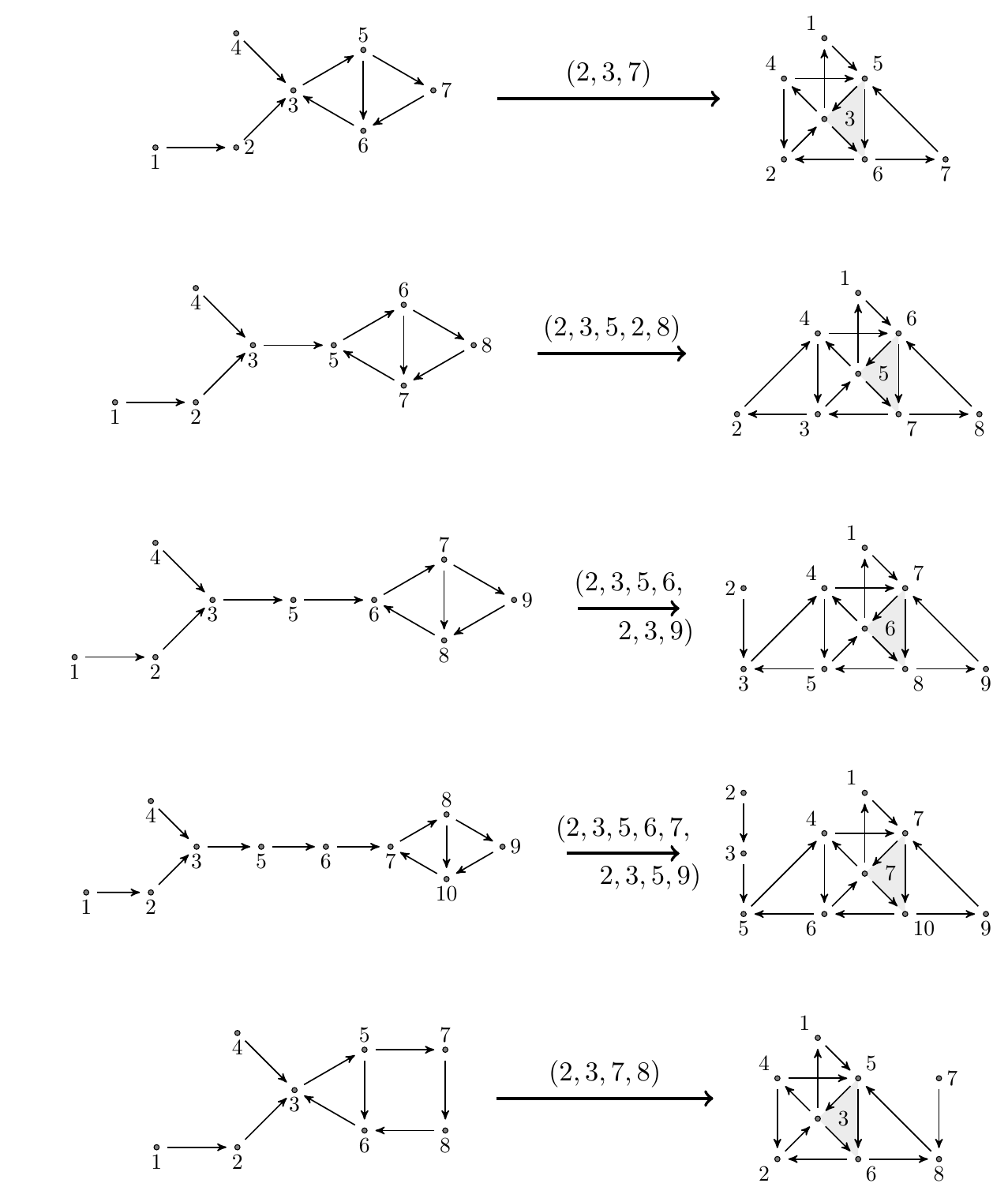}
\caption{Mutation sequences for the first 5 exceptional representative shown in
	Figure~\ref{fig:excreps}. The other 5 are shown in Figure~\ref{fig:exc-adm-seq-2}. The resulting quiver contains the quiver shown in
	Figure~\ref{fig:non-admissible2} which is not mutation-acyclic, so each exceptional representative is itself not mutation-acyclic.}
	\label{fig:exc-adm-seq-1}
	\end{figure}
	\begin{figure}
\includegraphics{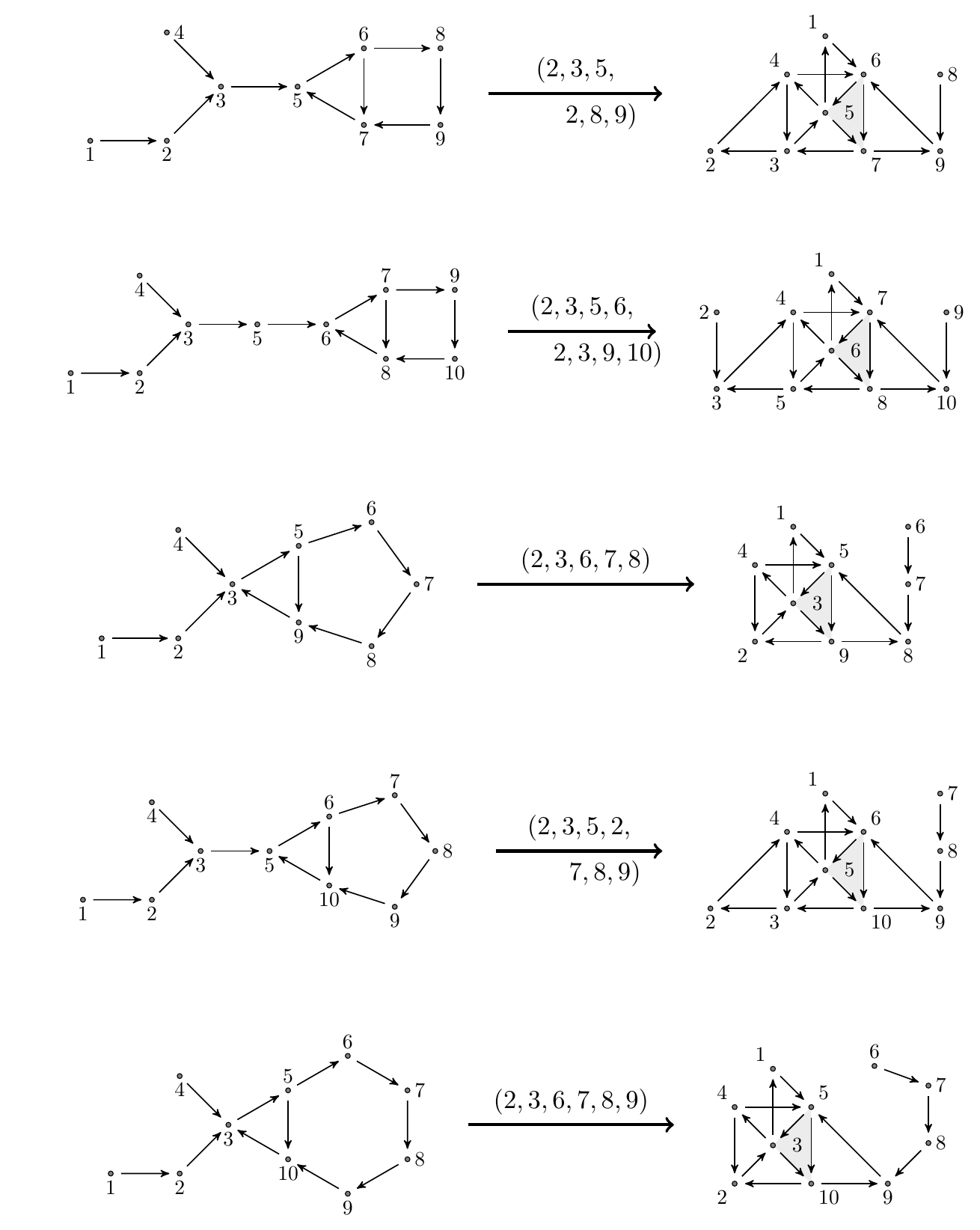}
\caption{The remaining 5 exceptional representatives not shown in Figure~\ref{fig:exc-adm-seq-1}.}
	\label{fig:exc-adm-seq-2}
\end{figure}
\begin{lemma}\label{lem:ex-mutacyc}
	Each exceptional representative is mutation-equivalent to a quiver containing
	the quiver depicted in Figure~\ref{fig:non-admissible2} as an induced
	subquiver. Hence each exceptional move-class is not mutation-acyclic.
\end{lemma}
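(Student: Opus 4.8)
The plan is to mirror, essentially verbatim, the argument used for the double arrow representatives in Lemma~\ref{lem:da-mutacyc}, but with the non-admissible quiver of Figure~\ref{fig:non-admissible2} playing the role of the one in Figure~\ref{fig:non-admissible}. Concretely, for each of the ten exceptional move-class representatives listed in Figure~\ref{fig:excreps} I would exhibit an explicit finite sequence of mutations --- these are exactly the sequences recorded in Figures~\ref{fig:exc-adm-seq-1} and~\ref{fig:exc-adm-seq-2} --- and verify by direct computation that the quiver obtained at the end of the sequence contains, on a suitable five-element subset of its vertices, an induced subquiver isomorphic to the quiver $Q$ of Figure~\ref{fig:non-admissible2}.

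Granting that verification, the conclusion follows immediately from results already in hand. Lemma~\ref{lem:adm-nonacyc-2} says the five-vertex quiver of Figure~\ref{fig:non-admissible2} admits no admissible quasi-Cartan companion, so by Theorem~\ref{thm:adm-mutacyc} it is not mutation-acyclic. Lemma~\ref{lem:sub-acyc} then forces the ambient mutated quiver containing it to fail to be mutation-acyclic, and since mutation-acyclicity is a property of the entire mutation class, the original exceptional representative is not mutation-acyclic either. As move-equivalent quivers are in particular mutation-equivalent, the whole exceptional move-class is not mutation-acyclic, which is the assertion of the lemma.

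The only genuine work is the verification step, and the main obstacle is bookkeeping rather than anything conceptual: one must track each mutation sequence carefully, checking in particular that after the final mutation the chosen five vertices carry exactly the arrows of Figure~\ref{fig:non-admissible2}, that no extra arrows among those five vertices have been created, and that no cancelling $2$-cycles have been overlooked in applying Definition~\ref{def:mutation}. Since there are ten representatives, of varying ranks, each with its own sequence, the check is finite but lengthy, and this is precisely the content that Figures~\ref{fig:exc-adm-seq-1} and~\ref{fig:exc-adm-seq-2} are intended to encode. I would also note, as a remark, that the particular five-vertex quiver of Figure~\ref{fig:non-admissible2} is chosen so as to be reachable from the exceptional representatives after only a short mutation sequence --- much as the quiver of Figure~\ref{fig:non-admissible} was tailored to the double arrow family --- which keeps each individual computation manageable.
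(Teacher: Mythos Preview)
Your proposal is correct and matches the paper's own proof essentially verbatim: the paper simply states that the argument is identical to that of Lemma~\ref{lem:da-mutacyc}, with Lemma~\ref{lem:adm-nonacyc-2} replacing Lemma~\ref{lem:adm-nonacyc} and Figures~\ref{fig:exc-adm-seq-1}--\ref{fig:exc-adm-seq-2} replacing Figure~\ref{fig:da-adm-seq}. The logical chain you outline (Lemma~\ref{lem:adm-nonacyc-2} $\Rightarrow$ Lemma~\ref{lem:sub-acyc} $\Rightarrow$ mutation-invariance of mutation-acyclicity) is exactly the intended one.
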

\begin{proof}
The proof is identical to that of Lemma~\ref{lem:da-mutacyc} with Lemma~\ref{lem:adm-nonacyc-2} in place of Lemma~\ref{lem:adm-nonacyc} and Figures~\ref{fig:exc-adm-seq-1} and~\ref{fig:exc-adm-seq-2} in place of Figure~\ref{fig:da-adm-seq}.
\end{proof}

\section{Maximal green sequences for minimal mutation-infinite quivers}\label{sec:mmi-mgs}
Building on~\cite{mills} we consider when minimal mutation-infinite quivers have maximal green sequences.

\subsection{Rank 3 quivers}
If $Q$ is an acyclic rank 3 quiver then it has a maximal green sequence by
Theorem~\ref{thm:acyclic} so we only consider those quivers which are oriented 3-cycles. 
Let $Q_{a,b,c}$ denote such a quiver with vertices $1,2,$ and $3$ and $a$ edges $1 \rightarrow 2$, $b$ edges $2 \rightarrow 3$ and $c$ edges $3 \rightarrow 1$. 
\begin{theorem}\cite{muller,S1}\label{thm:rank3_no_mgs}
If $a,b$ and $c \geq 2$, then the quiver $Q_{a,b,c}$ does not admit a maximal green sequence. 
\end{theorem}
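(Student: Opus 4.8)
The plan is to use the dictionary between maximal green sequences and finite admissible paths in the cluster scattering diagram; this is the method behind Muller's results on green sequences and subquivers. Realize the scattering diagram $\mathfrak{D}(Q_{a,b,c})$ in $\mathbb{R}^3$, with the initial chamber $\mathcal{C}^{+}$ (the cone spanned by $e_1,e_2,e_3$) and the chamber $\mathcal{C}^{-}=-\mathcal{C}^{+}$. Under this dictionary a maximal green sequence corresponds to a generic path from the interior of $\mathcal{C}^{+}$ to the interior of $\mathcal{C}^{-}$ that crosses only finitely many walls, each crossing in the "green" direction. The strategy is then to exhibit a full-dimensional obstruction — a "Badlands" region $\mathcal{B}\subset\mathbb{R}^3$ — with two properties: (i) every generic path from $\mathcal{C}^{+}$ to $\mathcal{C}^{-}$ must meet $\mathcal{B}$; and (ii) the walls of $\mathfrak{D}(Q_{a,b,c})$ accumulate densely throughout $\mathcal{B}$, so that any generic path entering $\mathcal{B}$ must cross infinitely many walls. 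Together these contradict the existence of a finite green sequence, hence of a maximal green sequence.

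To construct $\mathcal{B}$ I would exploit the self-similarity of $\mathfrak{D}(Q_{a,b,c})$ coming from the mutation dynamics of the oriented $3$-cycle. Because $a,b,c\ge 2$, mutation at a vertex of the cycle either returns an oriented $3$-cycle (replacing a multiplicity $c$ by $ab-c$) or is dealt with separately; iterating these mutations generates a group of piecewise-linear self-maps of $\mathbb{R}^3$ whose orbit of the coordinate hyperplanes produces infinitely many walls clustering around a limit set. One then checks that this limit set bounds (or fills) a region genuinely separating $\mathcal{C}^{+}$ from $\mathcal{C}^{-}$: concretely, one wants a quadratic form such as $x^2+y^2+z^2-xyz$ (the form preserved by the substitution $c\mapsto ab-c$), whose sign distinguishes $\mathcal{C}^{+}$ from $\mathcal{C}^{-}$ and whose intermediate locus is exactly where the walls pile up, so that no path can cross it in finitely many steps. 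The hypothesis $a,b,c\ge 2$ is precisely what makes the generated group large enough for this density to occur; when $\min(a,b,c)=1$ the quiver is mutation-finite and the argument collapses.

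The main obstacle is property (ii): proving the walls accumulate densely through a region that actually separates the two chambers, rather than only on a lower-dimensional piece of its boundary — this requires controlling the combinatorics of the wall-crossing automorphisms and is the technical heart of the proof. An alternative, more elementary route, which I expect is the one taken in the cited rank-$3$ work, avoids scattering diagrams entirely: assume a maximal green sequence $(i_1,\dots,i_\ell)$ exists, use sign-coherence (Theorem~\ref{thm:signcoh}) together with the Rotation Lemma (Theorem~\ref{thm:rotationlem}) to obtain maximal green sequences for all intermediate quivers, and then track a monovariant of the framed quiver along any green sequence — for instance one built from the arrow multiplicities among the unfrozen vertices — that is incompatible with ever reaching the all-red configuration; identifying the correct monovariant is the crux there, after which the contradiction is immediate. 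In either approach one must genuinely use the global structure of the mutation class, since every induced proper subquiver of $Q_{a,b,c}$ has rank at most $2$ and therefore admits a maximal green sequence, so Theorem~\ref{thm:subquiver} gives no leverage.
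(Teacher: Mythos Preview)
The paper does not prove this theorem; it is quoted from \cite{muller} and \cite{S1}. Your scattering-diagram outline is precisely Muller's method in \cite{muller}: he constructs a ``Badlands'' region in which the walls of $\mathfrak{D}(Q_{a,b,c})$ are dense and which separates $\mathcal{C}^{+}$ from $\mathcal{C}^{-}$, so no finite green path connects them. In that sense your first plan matches the cited source rather than offering an alternative.

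Two corrections to the sketch are worth recording. First, the claim ``when $\min(a,b,c)=1$ the quiver is mutation-finite and the argument collapses'' is false: $Q_{1,b,c}$ is mutation-infinite for generic $b,c$ (the paper's introduction notes that every mutation-infinite rank~$3$ quiver is minimal mutation-infinite, and the theorem immediately following this one exhibits explicit maximal green sequences for all $Q_{1,b,c}$, including mutation-infinite ones). The Badlands argument fails in that case for a different reason --- the single arrow gives the green path a finite escape route --- not because the mutation class becomes finite. Second, the quadratic form $x^{2}+y^{2}+z^{2}-xyz$ you invoke is the Markov-type invariant of the \emph{edge multiplicities} $(a,b,c)$ under the substitution $c\mapsto ab-c$; it lives on the space of quivers, not on the $c$-vector space $\mathbb{R}^{3}$ where the scattering diagram sits, and one should not slide between these two three-dimensional spaces without justification. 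Finally, as you yourself acknowledge, the substantive step --- density of walls throughout a region that genuinely separates the two chambers --- is asserted but not carried out, so what you have written is a plan rather than a proof.
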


\begin{theorem}
If any of $a,b$ or $c$ are equal to 1, then $Q_{a,b,c}$ has a maximal green sequence. 
\end{theorem}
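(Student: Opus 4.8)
The plan is to use the symmetries of the problem to cut down to one family of cases, and then to build the maximal green sequence from an adjacent acyclic quiver via the Rotation Lemma. First I would apply Lemma~\ref{lem:opposite_quiver} together with cyclic relabelling of the three vertices: since $Q_{a,b,c}\cong Q_{b,c,a}\cong Q_{c,a,b}$ and $Q_{a,b,c}^{\mathrm{op}}\cong Q_{c,b,a}$, it suffices to treat $Q_{a,b,1}$ with $a\ge b\ge 1$. The base case $Q_{1,1,1}$ is immediate from Theorem~\ref{thm:triangleend}: it ends in a $3$-cycle with $C$ a single vertex (an empty quiver, with empty maximal green sequence), so $(2,1,2)$ is a maximal green sequence.

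The structural point I would then record is that $Q_{a,b,1}$ is mutation-acyclic, and in fact just one mutation away from an acyclic quiver: a direct computation shows that when $a\ge b$ the quiver $P:=\mu_3(Q_{a,b,1})$ has one arrow $1\to 3$, $b$ arrows $3\to 2$ and $a-b$ arrows $1\to 2$, hence is acyclic with source $1$ and sink $2$, and $\mu_3(P)=Q_{a,b,1}$. By the Rotation Lemma (Theorem~\ref{thm:rotationlem}) it is therefore enough to produce a maximal green sequence of the acyclic quiver $P$ whose first mutation is at vertex $3$; rotating that sequence once yields a maximal green sequence of $\mu_3(P)=Q_{a,b,1}$. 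Since $P$ is acyclic there is plenty of freedom — Theorem~\ref{thm:acyclic} gives one maximal green sequence by repeated source mutation — and the task reduces to showing $P$ admits a maximal green sequence beginning at the prescribed vertex $3$.

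The main obstacle is exactly that $3$ is not a source of $P$, so one cannot simply quote Theorem~\ref{thm:acyclic}: a careless attempt to extend a green sequence starting with $\mu_3$ makes the arrow multiplicities grow without the sequence closing up, so the continuation must be chosen with care (after $\mu_3$ one returns to an oriented $3$-cycle still carrying a unit edge, and one should then mutate at the vertex that is the source of the next acyclic quiver in the chain and finish with source mutations). I would organise this together with a case split: the mutation-finite members of the family are already covered by Theorem~\ref{thm:mu_finite_mgs}, since the only mutation-finite rank-$3$ quivers without a maximal green sequence arise from once-punctured closed surfaces (the Markov quiver $Q_{2,2,2}$, which has no unit edge) or from $\mathbb{X}_7$; this leaves only the mutation-infinite (wild acyclic) quivers $P$, for which a uniform continuation can be written down explicitly and verified by tracking the $c$-vectors along the sequence. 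A final sanity check against Theorem~\ref{thm:rank3_no_mgs} confirms that the hypothesis $\min(a,b,c)=1$ is used essentially and not merely mutation-acyclicity: the quivers $Q_{2,2,m}$ with $m\ge 6$ are mutation-acyclic yet admit no maximal green sequence.
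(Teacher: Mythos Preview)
Your plan has a real gap: you never write down the maximal green sequence, and that explicit sequence is the entire content of the theorem. The detour through the Rotation Lemma --- reducing to ``find a maximal green sequence of the acyclic quiver $P=\mu_3(Q_{a,b,1})$ that begins at vertex~$3$'' --- is not a simplification but a rephrasing: by the Rotation Lemma itself this task is equivalent to finding a maximal green sequence of $\mu_3(P)=Q_{a,b,1}$, which is the original problem. Your mutation-finite/mutation-infinite split does legitimately dispose of the mutation-finite quivers via Theorem~\ref{thm:mu_finite_mgs}, but for the generic mutation-infinite case all that remains is the sentence ``a uniform continuation can be written down explicitly and verified by tracking the $c$-vectors along the sequence''. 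That is a promise, not a proof; the parenthetical hint about ``the source of the next acyclic quiver in the chain'' does not pin down a sequence either.

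There is also a slip in the base case: the single-vertex quiver $C=\{3\}$ does not have an empty maximal green sequence; its maximal green sequence is $(3)$. Theorem~\ref{thm:triangleend} therefore gives $(2,3,1,2)$, not $(2,1,2)$, for $Q_{1,1,1}$ --- and one checks directly that after applying $(2,1,2)$ to $\hat Q_{1,1,1}$ vertex~$3$ is still green.

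The paper's proof is a single line by comparison: taking $a=1$ without loss of generality, it simply exhibits the length-four sequence $(2,1,3,2)$ for $Q_{1,b,c}$ when $b>c$ and $(2,3,1,2)$ when $c>b$ (either works when $b=c$), and these are valid uniformly for all $b,c\ge 1$. No acyclic intermediate, no rotation, no finite/infinite split. Had you completed your approach you would have recovered precisely these sequences --- in your labelling, $(1,3,2,1)$ is a maximal green sequence for $Q_{a,b,1}$ whenever $a\ge b$ --- so the shortest path is to write down this candidate and verify the $c$-vectors directly.
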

\begin{proof}
Without loss of generality assume that $a=1$. If $b > c$ then $(2,1,3,2)$ is a maximal green sequence for $Q_{1,b,c}$. If $c > b$ then $(2,3,1,2)$ is a maximal green sequence for $Q_{1,b,c}$. If $c=b$ then either mutation sequence is a maximal green sequence for $Q_{1,b,c}$. 
\end{proof}

All mutation-infinite quivers of rank 3 are minimal mutation-infinite, so by the
above not all such quivers admit a maximal green sequence. In Section~\ref{sec:mut_class_mgs} we show that all of the quivers in the mutation class of a mutation-infinite quiver of rank 3 that admit a maximal green sequence form a finite connected subgraph of the quiver exchange graph. 

\subsection{Higher rank quivers}
There are an infinite number of rank 3 minimal mutation-infinite quivers, some
of which have maximal green sequences and some do not. In contrast, only a
finite number of higher rank minimal mutation-infinite quivers exist and we will
show that all such quivers do in fact admit a maximal green sequence.

\begin{lemma}\label{lem:no_bad_subquivers}
Let $Q$ be a minimal mutation-infinite quiver. Then $Q$ does not contain a
subquiver that arises from a triangulation of a once-punctured closed surface, or one that is in the mutation class of $\mathbb{X}_7$. 
\end{lemma}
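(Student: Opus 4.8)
The strategy is to argue by contradiction, relying on the classification of minimal mutation-infinite quivers from~\cite{lawson} together with the classification of mutation-finite quivers from~\cite{felikson}. Suppose $Q$ is minimal mutation-infinite and contains an induced subquiver $P$ that either arises from a triangulation of a once-punctured closed surface or lies in the mutation class of $\mathbb{X}_7$. Since $Q$ is minimal mutation-infinite, every \emph{proper} induced subquiver of $Q$ is mutation-finite; on the other hand the quivers $P$ in question are themselves mutation-finite (they are among Felikson--Shapiro--Tumarkin's exceptional mutation-finite families), so this alone is not yet a contradiction. The contradiction must instead come from a \emph{rank count}: the once-punctured closed surface quivers and the $\mathbb{X}_7$ quivers all have rank at least $6$ (the smallest once-punctured closed surface giving a quiver is the once-punctured torus, rank~$3$, but that quiver is an oriented $3$-cycle with all weights... — careful here), so I need to pin down exactly which ranks occur.

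\textbf{Key steps, in order.}
First, I would list the relevant mutation-finite quivers and their ranks: a triangulation of a once-punctured closed genus-$g$ surface yields a quiver of rank $6g-3$, so the possible ranks are $3$ (torus), $9$ (genus $2$), $15,\dots$; and $\mathbb{X}_7$ has rank~$7$, with $\mathbb{X}_6$ a rank-$6$ subquiver of it. Second, I would recall from~\cite{felikson,lawson} that minimal mutation-infinite quivers exist only in ranks $3$ through $10$. Third, I would handle the rank-$3$ case separately: the once-punctured torus quiver is $Q_{2,2,2}$ (the $3$-cycle with two arrows on each edge), and I must check that no minimal mutation-infinite rank-$3$ quiver (equivalently, no mutation-infinite $3$-cycle $Q_{a,b,c}$) contains $Q_{2,2,2}$ as an induced subquiver — but an induced subquiver of a rank-$3$ quiver on all three vertices \emph{is} the quiver itself, and $Q_{2,2,2}$ is mutation-finite while $Q$ is mutation-infinite, so $Q \neq Q_{2,2,2}$; any proper subquiver has rank $\le 2$, too small. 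Fourth, for ranks $4$ through $10$: an induced subquiver $P$ of the types listed has rank $\ge 6$ (the $\mathbb{X}_6 \subset \mathbb{X}_7$ case) or rank $\ge 9$ (closed surfaces beyond the torus), so $P$ can only be a proper induced subquiver of $Q$ when $\operatorname{rank}(Q) > \operatorname{rank}(P) \ge 6$, i.e.\ $\operatorname{rank}(Q) \in \{7,8,9,10\}$. To finish these remaining cases I would inspect the explicit list of minimal mutation-infinite representatives of ranks $7$--$10$ in Figures~\ref{fig:hcsreps},~\ref{fig:dareps},~\ref{fig:excreps} (and note that containing a given mutation-finite subquiver is \emph{not} obviously a move-invariant, so one cannot just check representatives) — here it is cleaner to observe that if $Q$ had such a subquiver $P$ on a vertex set $V$, then $Q[V]$ is mutation-finite of rank $\ge 6$, and since $Q$ is mutation-infinite, $V \subsetneq Q_0$; then deleting any one further vertex from $Q$ gives another proper, hence mutation-finite, subquiver, which is automatic and gives no new information — so the real content is a finite check.

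\textbf{Main obstacle.}
The genuine difficulty is the last step: ruling out a rank-$6$ (or rank-$9$) mutation-finite ``bad'' subquiver inside the rank-$7$ through rank-$10$ minimal mutation-infinite quivers. Because the property ``contains $P$ as an induced subquiver'' is not preserved under mutation, it is not enough to examine the move-class representatives; one must either argue at the level of the whole mutation class or exploit structural features of the bad subquivers (e.g.\ every proper induced subquiver of $\mathbb{X}_7$ or of a once-punctured-surface quiver is again of a very restricted mutation-finite type, and each such $P$ has all its vertices of degree $\ge 2$ with specific arrow multiplicities). I expect the cleanest route is: a minimal mutation-infinite quiver $Q$ of rank $r$ that contained such a $P$ would have a proper induced subquiver of rank $r-1 \ge 6$ that is mutation-finite but \emph{also contains no source or sink of the right shape}, and then one can match this against the finite list; alternatively, invoke that $P$ (being $\mathbb{X}_6$, $\mathbb{X}_7$, or a closed-surface block) is not a subquiver of any quiver in the mutation class of a minimal mutation-infinite representative by a direct computer-assisted enumeration over the finitely many minimal mutation-infinite mutation classes. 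I would present the argument via the rank bound plus this finite verification, flagging the computation explicitly.
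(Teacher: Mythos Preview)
Your rank-counting approach has a genuine gap in step four. You claim that for $Q$ of rank $4$ through $10$, any forbidden subquiver $P$ has rank $\geq 6$, but this overlooks the Markov quiver $Q_{2,2,2}$ (the once-punctured torus), which has rank $3$. Your step three only rules out $Q_{2,2,2}$ as a subquiver when $Q$ itself has rank $3$; it says nothing about a rank-$4$ (or higher) minimal mutation-infinite quiver containing $Q_{2,2,2}$ as a proper induced subquiver on three of its vertices. That is precisely the case requiring the most work, and your rank bound does not touch it. (The aside about $\mathbb{X}_6$ is also off-target: the lemma concerns subquivers lying in the mutation class of $\mathbb{X}_7$, and that class consists of two rank-$7$ quivers only; $\mathbb{X}_6$ is not one of them.)

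The paper sidesteps rank bookkeeping entirely and argues via local structural features. It records that (i) of the two quivers in the $\mathbb{X}_7$ class, one carries three double arrows and the other has six vertices each of in-degree~$2$ and out-degree~$2$; and (ii) by a result of Ladkani, every quiver from a once-punctured closed surface has \emph{every} vertex of in-degree~$2$ and out-degree~$2$ --- so the Markov quiver has three double edges, and the genus-$2$ quivers have nine such degree-$4$ vertices. It then checks, over the finite list of minimal mutation-infinite quivers, that none has more than one double edge and none has more than five vertices adjacent to four or more arrows. This single exhaustive count disposes of all forbidden subquivers at once, including the Markov case your argument missed, and avoids any case split on the rank of $P$.
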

\begin{proof}
	The mutation class of the $\mathbb{X}_7$ quiver consists of two quivers, one
	contains three double arrows while the other contains six vertices which are each the
	source of two arrows and the target of two arrows. See Figure~\ref{fig:x7}.
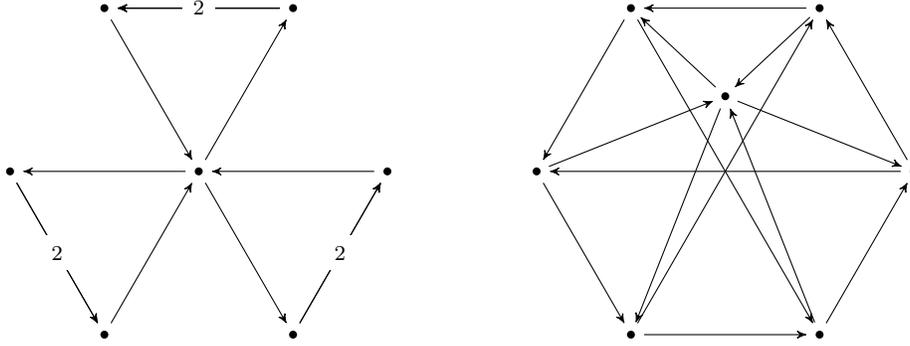
\begin{figure}
\begin{tikzpicture}[font = \tiny]
	\node[draw=none,minimum size=5cm,regular polygon,regular polygon sides=6] at (0,0) (a) {};
	\foreach \x in {1,2,...,6}
		\fill (a.corner \x) circle[radius=1.5pt]; 
	\draw[->, shorten <=5, shorten >=5, >=stealth'] (a.corner 1) to node[fill=white] {2} (a.corner 2);
	\fill (0,0) circle (1.5pt); 
	 
	\draw[->, shorten <=5, shorten >=5, >=stealth'] (a.corner 1) to node[fill=white] {2} (a.corner 2);
	\draw[->, shorten <=5, shorten >=5, >=stealth'] (0,0) to (a.corner 1);
	\draw[->, shorten <=5, shorten >=5, >=stealth'] (a.corner 2) to (0,0);
	\draw[->, shorten <=5, shorten >=5, >=stealth'] (a.corner 1) to node[fill=white] {2} (a.corner 2);

	\draw[->, shorten <=5, shorten >=5, >=stealth'] (a.corner 5) to node[fill=white] {2} (a.corner 6);
	\draw[->, shorten <=5, shorten >=5, >=stealth'] (0,0) to (a.corner 5);
	\draw[->, shorten <=5, shorten >=5, >=stealth'] (a.corner 6) to (0,0);
	\draw[->, shorten <=5, shorten >=5, >=stealth'] (a.corner 5) to node[fill=white] {2} (a.corner 6);

	\draw[->, shorten <=5, shorten >=5, >=stealth'] (a.corner 3) to node[fill=white] {2} (a.corner 4);
	\draw[->, shorten <=5, shorten >=5, >=stealth'] (0,0) to (a.corner 3);
	\draw[->, shorten <=5, shorten >=5, >=stealth'] (a.corner 4) to (0,0);
	\draw[->, shorten <=5, shorten >=5, >=stealth'] (a.corner 3) to node[fill=white] {2} (a.corner 4);

	\node[draw=none,minimum size=5cm,regular polygon,regular polygon sides=6] at (7,0) (a) {};
	\foreach \x in {1,2,...,6}
		\fill (a.corner \x) circle[radius=1.5pt]; 
	\fill (7,1) circle (1.5pt);
	\draw[->, shorten <=5, shorten >=5, >=stealth'] (a.corner 1) to (a.corner 2);
	\draw[->, shorten <=5, shorten >=5, >=stealth'] (a.corner 2) to (a.corner 3);
	\draw[->, shorten <=5, shorten >=5, >=stealth'] (a.corner 3) to (a.corner 4);
	\draw[->, shorten <=5, shorten >=5, >=stealth'] (a.corner 4) to (a.corner 5);
	\draw[->, shorten <=5, shorten >=5, >=stealth'] (a.corner 5) to (a.corner 6);
	\draw[->, shorten <=5, shorten >=5, >=stealth'] (a.corner 6) to (a.corner 1);
	\draw[->, shorten <=5, shorten >=5, >=stealth'] (a.corner 1) to (7,1);
	\draw[->, shorten <=5, shorten >=5, >=stealth'] (a.corner 3) to (7,1);
	\draw[->, shorten <=5, shorten >=5, >=stealth'] (a.corner 5) to (7,1);
	\draw[->, shorten <=5, shorten >=5, >=stealth'] (7,1) to (a.corner 2);
	\draw[->, shorten <=5, shorten >=5, >=stealth'] (7,1) to (a.corner 4);
	\draw[->, shorten <=5, shorten >=5, >=stealth'] (7,1) to (a.corner 6);
	\draw[->, shorten <=5, shorten >=5, >=stealth'] (a.corner 4) to (a.corner 1);
	\draw[->, shorten <=5, shorten >=5, >=stealth'] (a.corner 2) to (a.corner 5);
	\draw[->, shorten <=5, shorten >=5, >=stealth'] (a.corner 6) to (a.corner 3);
\end{tikzpicture}
\caption{The quiver $\mathbb{X}_7$ is on the left and the other quiver in its mutation class is on the right.}\label{fig:x7}
\end{figure}

	Ladkani shows in~\cite[Prop.\ 3.6.]{ladkani-numarrows} that for any quiver
	arising from a triangulation of a once-punctured surface without boundary each
	vertex is the source of two arrows and the target of two arrows. In the genus
	1 surface case we get the Markov quiver with 3 vertices. In the case of the
	genus 2 surface each quiver has 9 such vertices.

	The only minimal mutation-infinite quivers containing any double edges are the
	double arrow representatives, which each contain a single double edge. It can
	also be seen through an exhaustive search that no minimal mutation-infinite
	quiver contains more than 5 vertices which are each adjacent to 4 or more
	arrows. Hence no minimal mutation-infinite quivers contain subquivers from
	once-punctured surfaces or from $\mathbb{X}_7$.
\end{proof}

\begin{corollary}\label{cor:mgs_subquivers}
Let $Q$ be a minimal mutation-infinite quiver. Every induced subquiver of $Q$ has a maximal green sequence. 
\end{corollary}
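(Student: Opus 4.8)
The plan is to deduce this immediately by combining two results already in hand: Mills' classification of mutation-finite quivers without maximal green sequences (Theorem~\ref{thm:mu_finite_mgs}) and the structural restriction on subquivers of minimal mutation-infinite quivers (Lemma~\ref{lem:no_bad_subquivers}). No new construction is needed; the work is entirely in lining up the hypotheses of these two theorems.

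First I would fix an arbitrary induced subquiver $S = Q[V]$, where $V \subsetneq Q_0$. Because $Q$ is minimal mutation-infinite, every induced proper subquiver is by definition mutation-finite, so $S$ is mutation-finite. This is precisely what is required to bring $S$ within the scope of Theorem~\ref{thm:mu_finite_mgs}, which asserts that a mutation-finite quiver fails to admit a maximal green sequence exactly when it arises from a triangulation of a once-punctured closed surface, or lies in the mutation class of $\mathbb{X}_7$. Thus to conclude that $S$ has a maximal green sequence it suffices to rule out both of these two exceptional cases for $S$.

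The key step is then to observe that $S$ is itself an induced subquiver of $Q$, so both exceptional cases are forbidden by Lemma~\ref{lem:no_bad_subquivers}: if $S$ arose from a triangulation of a once-punctured closed surface, or lay in the mutation class of $\mathbb{X}_7$, then $Q$ would contain an induced subquiver of exactly the type that Lemma~\ref{lem:no_bad_subquivers} prohibits, a contradiction. Hence neither exceptional case can occur, and Theorem~\ref{thm:mu_finite_mgs} yields that $S$ admits a maximal green sequence.

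I do not expect a serious obstacle here; the substantive content sits in the two cited results. The one point that warrants care is logical rather than computational: Mills' criterion is a statement about the mutation class of $S$ \emph{itself} (whether $S$ is one of the bad quivers), not a statement about whether $S$ merely contains a bad sub-subquiver, so it is important that Lemma~\ref{lem:no_bad_subquivers} is applied to $S$ directly as a subquiver of $Q$. I would also note the convention in force that an induced subquiver is taken on a proper vertex subset, which is what makes the mutation-finiteness hypothesis available and hence what lets this corollary stand at this stage, prior to establishing that $Q$ itself has a maximal green sequence.
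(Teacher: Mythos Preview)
Your proposal is correct and follows exactly the same approach as the paper's proof, which simply says the result follows from Theorem~\ref{thm:mu_finite_mgs} and Lemma~\ref{lem:no_bad_subquivers}. You have merely unpacked this one-line citation into its constituent logical steps.
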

\begin{proof}
Follows from Theorem~\ref{thm:mu_finite_mgs} and Lemma~\ref{lem:no_bad_subquivers}.
\end{proof}

\begin{theorem}\label{thm:mmi_mgs}
Suppose $Q$ is a minimal mutation-infinite quiver of rank at least 4. Then $Q$
has a maximal green sequence.
\end{theorem}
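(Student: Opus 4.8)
The plan is to exploit the classification of minimal mutation-infinite quivers into $47$ move-classes and reduce, via the move-equivalence, to checking a maximal green sequence for each move-class representative. Since a maximal green sequence is not a mutation invariant, I cannot simply transport it along moves directly; instead the strategy is to set up a small toolkit of reductions that, applied to each representative, either exhibits an explicit maximal green sequence or reduces the quiver to a strictly smaller one known to have a maximal green sequence. The two workhorses are Theorem~\ref{thm:directsum-mgs} (a $t$-colored direct sum of quivers with maximal green sequences has one) and Theorem~\ref{thm:triangleend} (a quiver ending in a $k$-cycle has a maximal green sequence if and only if the quiver obtained by deleting the $k$-cycle does), together with Theorem~\ref{thm:acyclic} (acyclic quivers have maximal green sequences) and the Rotation/Reverse Rotation Lemmas to move a maximal green sequence around the exchange graph.

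First I would record the base case: by Corollary~\ref{cor:mgs_subquivers}, every induced proper subquiver of a minimal mutation-infinite quiver has a maximal green sequence, so whenever a reduction step drops a vertex the resulting subquiver is covered. Next I would dispose of the acyclic representatives outright by Theorem~\ref{thm:acyclic}. For the remaining representatives in Figures~\ref{fig:hcsreps},~\ref{fig:dareps}, and~\ref{fig:excreps}, I would go through them (or through a suitable mutation-equivalent quiver obtained by a short explicit mutation sequence, as in Figures~\ref{fig:da-adm-seq}, \ref{fig:exc-adm-seq-1}, \ref{fig:exc-adm-seq-2}) and in each case display either a decomposition as a $t$-colored direct sum of smaller pieces — each summand being acyclic, an induced subquiver of a minimal mutation-infinite quiver, or something handled inductively — or an oriented $3$-cycle (or longer $k$-cycle) in the "ending in a $k$-cycle" configuration so that Theorem~\ref{thm:triangleend} reduces to a rank-$(n-2)$ subquiver. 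Because the reductions strictly decrease the rank, this terminates; each time one reaches rank $3$ the quiver is either acyclic or an oriented $3$-cycle $Q_{a,b,c}$, and the concrete maximal green sequences $(2,1,3,2)$ or $(2,3,1,2)$ work whenever some label is $1$ — and one checks the reductions never leave a $Q_{a,b,c}$ with all of $a,b,c\ge 2$.

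The main obstacle I anticipate is purely combinatorial bookkeeping: showing that for \emph{every} one of the finitely many move-class representatives at least one of these reductions genuinely applies — i.e., that each such quiver either decomposes as a $t$-colored direct sum or, after a short explicit mutation sequence, ends in a $k$-cycle (and in a form whose removed $k$-cycle has no label $\ge 2$ in the problematic position). This is where the argument becomes case-heavy rather than conceptually hard, and it is natural to package the verification as an appeal to the explicit figures already introduced in Section~\ref{sec:mmi-mut} together with a handful of additional mutation-sequence figures. A secondary technical point is tracking the induced permutations $\sigma$ through the direct-sum and reverse-rotation steps (as in the proof of Theorem~\ref{thm:triangleend}), since Theorem~\ref{thm:reverserotationlem} requires knowing $\sigma$ on the deleted vertices; but this is exactly the computation already carried out there, so it transfers mechanically.
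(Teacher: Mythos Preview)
Your proposal has a genuine gap at the very first step: you reduce to the $47$ move-class \emph{representatives}, but the theorem asserts a maximal green sequence for \emph{every} minimal mutation-infinite quiver of rank $\ge 4$, and you correctly observe that having a maximal green sequence is not mutation-invariant. Moves are sequences of mutations, so proving the representatives admit maximal green sequences says nothing about the other quivers in their move-classes. The Rotation and Reverse Rotation Lemmas do not rescue this: they only transport a maximal green sequence to quivers lying \emph{along that particular green sequence} in the exchange graph, not to an arbitrary mutation-adjacent quiver. So the sentence ``reduce, via the move-equivalence, to checking a maximal green sequence for each move-class representative'' is exactly the step that cannot be carried out.

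The paper's proof avoids this by never reducing to representatives. It works directly with the full (finite) list of minimal mutation-infinite quivers of rank $\ge 4$ and applies your same toolkit quiver-by-quiver: first, any quiver that is a $t$-colored direct sum (in particular any quiver with a sink or source, or with a bridge edge) is handled by Theorem~\ref{thm:directsum-mgs} together with Corollary~\ref{cor:mgs_subquivers}; this leaves exactly $42$ quivers. Of these, $35$ end in a $3$-cycle and are handled by Theorem~\ref{thm:triangleend}. The remaining $7$ are dealt with by writing down explicit maximal green sequences --- six of them form a family $\Theta_n$ ($4\le n\le 9$) with sequence $(2,3,\dotsc,n,1,2)$, and the last one has sequence $(3,1,2,5,6,4,3)$. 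Your reductions are the right ones; the fix is simply to apply them to every minimal mutation-infinite quiver rather than to one quiver per move-class.
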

\begin{proof}
Let $Q$ be a minimal mutation-infinite quiver which is the $t$-colored direct
sum of two induced subquivers of $Q$. Then by Theorem~\ref{thm:directsum-mgs}
and Corollary~\ref{cor:mgs_subquivers} the quiver $Q$ has a maximal green sequence.

In particular, if a minimal mutation-infinite quiver contains either a sink or a
source then it has a maximal green sequence. Similarly if it can be decomposed
into two disjoint induced subquivers joined by a single arrow then it has a maximal green
sequence.

There are only $42$ minimal mutation-infinite quivers which cannot be written as
a $t$-colored direct sum\footnote{Images of which can be found at
\url{https://www.maths.dur.ac.uk/users/j.w.lawson/mmi/quivers/non-direct-sum/}}.
Of these, $35$ end in a 3-cycle and so admit a maximal green sequence by
Theorem~\ref{thm:triangleend} and Corollary~\ref{cor:mgs_subquivers}.

This leaves $7$ minimal mutation infinite quivers which cannot be written as a
direct sum or end in a 3-cycle. Six of these 7 quivers have a similar structure and we will call these quivers $\Theta_n$ for $n=4,5,6,7,8,9$. A picture of this family of quivers is given on the left of Figure~\ref{fig:mgs_quivers}. For a suitable $n$, the quiver $\Theta_n$ has a maximal green sequence $(2, 3, \dots, n, 1, 2)$.
The final quiver, appearing on the right of Figure~\ref{fig:mgs_quivers}, has a maximal green sequence $(3, 1, 2, 5, 6, 4, 3)$.
\end{proof}

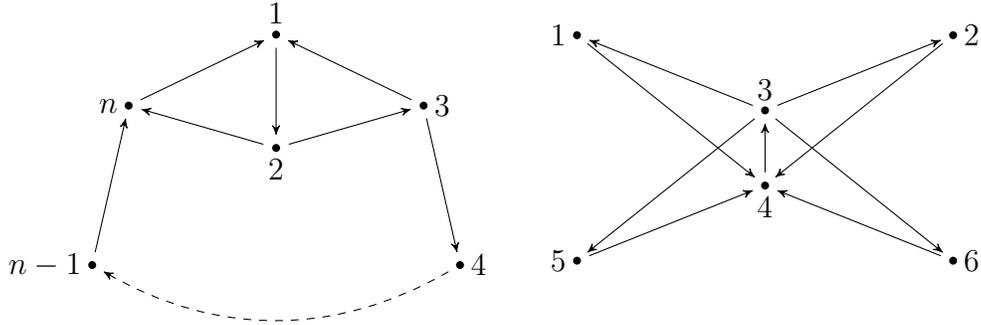
\begin{figure}
\begin{tikzpicture}
\node[draw=none,minimum size=5cm,regular polygon,regular polygon sides=7] at (0,0) (a) {};
\foreach \x in {1,2,3,6,7} \fill (a.corner \x) circle[radius=1.5pt];
\fill (0,1) circle (1.5pt);

\draw[->, shorten <=5, shorten >=5, >=stealth'] (a.corner 1) node[above]{1} to (0,1) node[below]{2};
\draw[->, shorten <=5, shorten >=5, >=stealth'] (0,1) to (a.corner 7) node[right]{3};
\draw[->, shorten <=5, shorten >=5, >=stealth'] (a.corner 7) to (a.corner 1);
\draw[->, shorten <=5, shorten >=5, >=stealth'] (a.corner 7) to (a.corner 6) node[right]{4};
\draw[->,dashed,shorten <=5, shorten >=5,>=stealth'] (a.corner 6) to[bend left] (a.corner 3);
\draw[->, shorten <=5, shorten >=5, >=stealth'] (a.corner 3) node[left]{$n-1$} to (a.corner 2) node[left]{$n$};
\draw[->, shorten <=5, shorten >=5, >=stealth'] (a.corner 2) to (a.corner 1);
\draw[->, shorten <=5, shorten >=5, >=stealth'] (0,1) to (a.corner 2);

\coordinate (a1) at (4,2.5); 
\coordinate (a2) at (9,2.5); 
\coordinate (a3) at (6.5,1.5); 
\coordinate (a4) at (6.5,.5); 
\coordinate (a5) at (4,-.5); 
\coordinate (a6) at (9,-.5);
\foreach \x in {1,2,...,6} \fill (a\x) circle[radius=1.5pt];

\draw[->, shorten <=5, shorten >=5, >=stealth'] (a1) node[left]{1} to (a4);
\draw[->, shorten <=5, shorten >=5, >=stealth'] (a2) node[right]{2} to (a4);
\draw[->, shorten <=5, shorten >=5, >=stealth'] (a5) node[left]{5} to (a4);
\draw[->, shorten <=5, shorten >=5, >=stealth'] (a6) node[right]{6} to (a4);
\draw[->, shorten <=5, shorten >=5, >=stealth'] (a4) node[below]{4} to (a3);
\draw[->, shorten <=5, shorten >=5, >=stealth'] (a3) node[above]{3} to (a1);
\draw[->, shorten <=5, shorten >=5, >=stealth'] (a3) to (a2);
\draw[->, shorten <=5, shorten >=5, >=stealth'] (a3) to (a5);
\draw[->, shorten <=5, shorten >=5, >=stealth'] (a3) to (a6);
\end{tikzpicture}
\caption{Minimal mutation-infinite quivers that cannot be written as a $t$-colored direct sum and do not end in a 3-cycle.}\label{fig:mgs_quivers}
\end{figure}

\section{A=U for minimal mutation-infinite quivers}\label{sec:mmi-upper}
For completeness we recall the result about the rank 3 quivers.
\begin{theorem}\cite[Theorem 1.3]{llm}
Let $\mathcal{A}$ be a rank 3 cluster algebra. The cluster algebra $\mathcal{A}$ is equal to its upper cluster algebra if and only if it is acyclic. 
\end{theorem}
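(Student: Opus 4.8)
The plan is to prove the two directions of the biconditional separately; one is immediate and all of the content lies in the other. If $\mathcal{A}$ is acyclic then $\mathcal{A}=\mathcal{U}$ by Theorem~\ref{thm:acyclic_AU}, so there is nothing to prove there. For the converse it is equivalent, by contraposition, to show that if the mutable quiver $Q$ of $\mathcal{A}$ is not mutation-acyclic, then $\mathcal{A}\subsetneq\mathcal{U}$.

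The first step is to pin down the non-mutation-acyclic rank $3$ quivers. Up to isomorphism a rank $3$ quiver is either acyclic or an oriented $3$-cycle $Q_{a,b,c}$, and a short analysis of Definition~\ref{def:mutation} shows that if some multiplicity is at most $1$ then one or two further mutations produce a quiver with a source and a sink. Hence a non-mutation-acyclic $Q$ must be an oriented $3$-cycle with $a,b,c\ge 2$; and since $\mut(Q)$ then contains no acyclic quiver, while every rank $3$ quiver is acyclic or an oriented $3$-cycle, every quiver in $\mut(Q)$ is again an oriented $3$-cycle with all multiplicities $\ge 2$. In particular the skew-symmetric $3\times 3$ matrix $B_Q$ is singular with kernel spanned by a vector all of whose entries are positive, and the same holds throughout $\mut(Q)$. (The precise list of non-mutation-acyclic rank $3$ quivers — which contains the Markov quiver $Q_{2,2,2}$ together with everything mutation-equivalent to it — can be extracted from this analysis, but is not needed.) Since mutation-equivalent seeds yield isomorphic cluster algebras it is enough to treat one representative of each mutation class, and we may present the argument with trivial coefficients, the frozen variables playing no essential role.

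The heart of the argument is to exhibit an explicit element $z\in\mathcal{U}\setminus\mathcal{A}$. For the Markov quiver $Q_{2,2,2}$ one takes $z=(x_1^2+x_2^2+x_3^2)/(x_1x_2x_3)$: using the exchange relations $x_ix_i'=x_j^2+x_k^2$ one checks that $z$ retains exactly this form in every cluster, so $z$ is a Laurent polynomial with respect to every cluster and hence $z\in\mathcal{U}$. To see $z\notin\mathcal{A}$, observe that the kernel vector $(1,1,1)$ of $B_Q$ defines a $\mathbb{Z}$-grading of $\mathcal{F}$ under which every cluster variable is homogeneous of positive degree — checked inductively, each mutated quiver being again an oriented $3$-cycle with all multiplicities $\ge 2$ — so every nonzero element of $\mathcal{A}$ has all of its homogeneous components in non-negative degrees, whereas $z$ is homogeneous of degree $-1$. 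For a general non-acyclic $Q_{a,b,c}$ one uses the analogous element prescribed by the exchange relations: it again has denominator vector $(1,1,1)$ and is homogeneous of negative degree with respect to the grading defined by $\ker B_Q$, so the same contradiction applies; and $z\in\mathcal{U}$ is verified either by induction along the exchange graph (all of whose vertices are oriented $3$-cycles, by the previous step) or, most efficiently, via the explicit combinatorial formula for $z$ established in~\cite{llm}.

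The main obstacle is this last point in the general case: writing down $z$ for an arbitrary non-acyclic $Q_{a,b,c}$ and proving that it is a \emph{global} Laurent polynomial requires controlling its Laurent expansion in every one of the infinitely many clusters of $\mathcal{A}$, which is precisely the purpose of the combinatorial formula of~\cite{llm}; one must also confirm that $z$ has negative degree with respect to the grading from $\ker B_Q$ so that the argument above certifies $z\notin\mathcal{A}$. The remaining ingredients — the easy implication, the classification of non-mutation-acyclic rank $3$ quivers, and the passage from trivial to arbitrary geometric coefficients — are routine.
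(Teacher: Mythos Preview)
The paper does not give its own proof of this statement: it is quoted verbatim as \cite[Theorem~1.3]{llm} and used as a black box, with no argument supplied. So there is no ``paper's proof'' to compare your proposal against; what you have written is an outline of the proof in the cited source.

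As such an outline your proposal is essentially correct and follows the strategy of \cite{llm}: the acyclic direction is immediate from Theorem~\ref{thm:acyclic_AU}, and the non-acyclic direction is handled by exhibiting an explicit Laurent polynomial $z\in\mathcal{U}\setminus\mathcal{A}$, with the Markov case $(x_1^2+x_2^2+x_3^2)/(x_1x_2x_3)$ serving as the prototype and the general case requiring the combinatorial formula developed in \cite{llm}. Two points deserve a little more care. First, your positivity claim for the grading---that every cluster variable has strictly positive degree---needs the observation that in a non-mutation-acyclic $Q_{a,b,c}$ each edge multiplicity is strictly less than the product of the other two (otherwise a single mutation produces an acyclic quiver); this is what guarantees $\deg(x_k')=ij-k>0$ at every step, and it is not quite the same as ``all multiplicities $\ge 2$''. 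Second, you openly defer the construction and Laurent-ness of $z$ in the general case to \cite{llm}, which is where the theorem is proved; that is fine as a citation, but it means your write-up is a summary of that argument rather than an independent proof.
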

In the case of higher rank minimal mutation-infinite quivers we show that they are all Louise and it follows that the cluster algebra that they generate is equal to its upper cluster algebra. 
\begin{theorem}\label{thm:mmi_Louise}
If $Q$ is a minimal mutation-infinite quiver of rank at least 4, then $Q$ is Louise.
\end{theorem}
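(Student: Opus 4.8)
The plan rests on two elementary observations. First, the Louise property depends only on $\mut(Q)$, since its definition is an existential statement over $\mut(Q)$. Second, a mutation-acyclic quiver is Louise: if $Q'\in\mut(Q)$ is acyclic then $Q'$ is Louise (acyclic quivers have the Louise property, and every induced subquiver of an acyclic quiver is again acyclic), and $\mut(Q)=\mut(Q')$. Combined with Section~\ref{sec:mmi-mut}, this already settles every move-class whose hyperbolic Coxeter simplex representative is mutation-acyclic, that is, all of them except the class of the quiver in Figure~\ref{fig:mut-acyclic-rep}. Since there are only finitely many minimal mutation-infinite quivers of rank at least $4$ and Louise-ness is constant on a mutation class, it remains to handle that one Coxeter class together with the double-arrow and exceptional classes.

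The engine for the remaining classes is the following Louise analogue of Corollary~\ref{cor:mgs_subquivers}: \emph{every induced proper subquiver $P$ of a minimal mutation-infinite quiver of rank at least $4$ is Louise.} I would prove this by induction on $\operatorname{rank}(P)$. If $\operatorname{rank}(P)\le 3$ then $P$ is mutation-finite by minimality of the ambient quiver; if $P$ has no oriented $3$-cycle it is acyclic, while otherwise $P\cong Q_{a,b,c}$ with $a,b,c\ge 1$, and the only mutation-finite such quivers are $Q_{1,1,1},Q_{1,1,2},Q_{1,2,2}$ --- all mutation-acyclic --- and $Q_{2,2,2}$, the Markov quiver, which Lemma~\ref{lem:no_bad_subquivers} forbids as a subquiver; hence $P$ is mutation-acyclic and therefore Louise. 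For larger rank, $P$ is again mutation-finite, and by Lemma~\ref{lem:no_bad_subquivers} it is neither a quiver from a triangulation of a once-punctured closed surface nor one of the two quivers in the mutation class of $\mathbb{X}_7$; one then exhibits a $P'\in\mut(P)$ with a separating edge $i\to j$ --- most conveniently at a source or sink of $P'$, or at a single arrow witnessing a direct-sum decomposition --- whose three induced subquivers $P'[P'_0\setminus\{i\}]$, $P'[P'_0\setminus\{j\}]$, $P'[P'_0\setminus\{i,j\}]$ have strictly smaller rank and, being mutation-finite, are Louise by the induction hypothesis. As the mutation-finite quivers of rank at most $9$ form a finite list, this reduces to a finite verification.

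Granted the claim, the theorem follows by the same case analysis as the proof of Theorem~\ref{thm:mmi_mgs}. If $Q$ is a $t$-colored direct sum of two induced subquivers joined by a single arrow $a_1\to b_1$, I would check --- possibly after mutating one summand so that $a_1$ is not reachable from an oriented cycle, or $b_1$ cannot reach one --- that $a_1\to b_1$ is a separating edge of a quiver in $\mut(Q)$ whose three induced subquivers are covered by the claim and hence Louise. Of the $42$ minimal mutation-infinite quivers that are not such direct sums, the $35$ ending in a $k$-cycle acquire a separating edge inside the cycle after the mutation $\mu_2\circ\dots\circ\mu_{k-1}$ of the proof of Theorem~\ref{thm:triangleend}: the connecting arrow of the resulting decomposition into $C$ and a type $A_{k-1}$ quiver has no bi-infinite path through it, and the three pieces are again covered by the claim. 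The last $7$ quivers --- the family $\Theta_n$ and the quiver on the right of Figure~\ref{fig:mgs_quivers} --- are done by exhibiting an explicit $Q'\in\mut(Q)$ and separating edge by hand. The exceptional Coxeter quiver of Figure~\ref{fig:mut-acyclic-rep} already illustrates the mechanism with no mutation: vertex $1$ is a source, so $1\to 2$ is separating, and the subquivers on $\{2,3,4\}$, $\{1,3,4\}$, $\{3,4\}$ are an oriented $3$-cycle, an acyclic quiver and a single edge respectively --- all Louise.

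The step I expect to cost the most is the bookkeeping inside the claim: one must be sure that no subquiver met during the recursion is one of the genuinely non-locally-acyclic mutation-finite quivers, and this is exactly what Lemma~\ref{lem:no_bad_subquivers} guarantees. In addition, the direct-sum and ``ends in a $k$-cycle'' reductions each need a small dedicated lemma --- analogues of Theorems~\ref{thm:directsum-mgs} and~\ref{thm:triangleend} --- that produces the required separating edge; checking that these connecting arrows really are separating (equivalently, that one side carries no oriented cycle reaching the tail, or the other no oriented cycle reachable from the head) is where most of the care will be needed.
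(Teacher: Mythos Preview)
Your opening two observations and your treatment of the quiver in Figure~\ref{fig:mut-acyclic-rep} match the paper exactly. From there, however, the paper takes a far shorter route. Since the Louise property is a mutation-class invariant, the paper simply works with the move-class \emph{representatives}: for each of the six double-arrow and ten exceptional representatives it names a single separating edge $1\to 2$ directly on the representative (for double-arrow, $2$ is the vertex opposite the double arrow and $1$ a neighbour not on it; for exceptional, $2$ is the leftmost vertex of the $3$-cycle and $1$ a neighbour outside it). In every case the three induced subquivers are acyclic or become acyclic after one mutation, so the Louise recursion terminates after a single step. That is the whole argument --- no direct-sum or $k$-cycle machinery is invoked.

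Your detour through the structure of Theorem~\ref{thm:mmi_mgs} is not wrong in spirit, but the inductive ``engine'' has a genuine gap. Your claim is that every induced proper subquiver $P$ of a minimal mutation-infinite quiver is Louise, and you propose to prove it by mutating $P$ to some $P'$ with a separating edge and applying the induction hypothesis to $P'[P'_0\setminus\{i\}]$, etc. But these quivers are induced subquivers of $P'\in\mut(P)$, not of $P$ itself, and there is no reason they occur as induced subquivers of any minimal mutation-infinite quiver; so the hypothesis as stated does not apply, and Lemma~\ref{lem:no_bad_subquivers} is no longer available to exclude the bad mutation-finite types. You could repair this by strengthening the claim to ``every mutation-finite quiver of rank at most $9$ avoiding the once-punctured closed surface and $\mathbb{X}_7$ classes is Louise'', which is what your phrase ``reduces to a finite verification'' hints at --- but that is a substantially larger statement than what is needed, is not proved in the paper, and still leaves the direct-sum and ``ends in a $k$-cycle'' separating-edge lemmas, plus the seven exceptional quivers, to be worked out. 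The paper's approach sidesteps all of this by checking seventeen explicit quivers.
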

\begin{proof}

All minimal mutation-infinite quiver representatives arising as an orientation
of a hyperbolic Coxeter simplex diagram are acyclic apart from two orientations
of the fully connected rank 4 quiver, shown as the fifth and sixth quivers in
Figure~\ref{fig:hcsreps}. The sixth quiver is mutation-equivalent to an
acyclic quiver by mutating at the top vertex.

All quivers in a move-class are mutation-equivalent and hence each minimal
mutation-infinite quiver $Q$ in all but one Coxeter diagram move-class is
mutation-equivalent to an acyclic quiver. Therefore the cluster algebra is
acyclic and hence Louise. 

The remaining case for the hyperbolic Coxeter representatives is the move-class of the fifth rank 4 quiver in Figure~\ref{fig:hcsreps}. We will show that it is locally acyclic so the claim follows from Theorem~\ref{thm:LA_AU}. Consider the representative quiver $R$ given in Figure~\ref{fig:mut-acyclic-rep}. Two vertices of the quiver labeled $i$ and $j$. The edge $1 \rightarrow 2$ is a separating edge. The quivers $R[R_0 \setminus \{2\}]$ and $R[R_0 \setminus \{1,2\}]$ are acyclic and hence Louise. Mutating the quiver $R[R_0 \setminus \{1\}]$ at $j$ produces an acyclic quiver, which again shows that it is Louise. Therefore the quiver $R$ is Louise.

The proof for minimal mutation-infinite quivers with move-class either of double arrow type or exceptional type is identical to the argument given above with the following choice of vertices for $1$ and $2$. In the case of the double arrow representatives we take $2$ to be the vertex opposite the double arrow and $1$ to be an adjacent vertex that is not incident to the double arrow. Similarly, for the exceptional representatives we take $2$ to be the leftmost vertex of the 3-cycle and $1$ to be an adjacent vertex that is not a part of the 3-cycle. 
\end{proof}
%
\begin{corollary}\label{cor:mmi_upper}
If $Q$ is a minimal mutation-infinite quiver of rank at least 4, then the cluster algebra $\mathcal{A}(Q)$ is equal to its upper cluster algebra. 
\end{corollary}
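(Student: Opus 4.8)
The plan is to deduce the statement directly from Theorem~\ref{thm:mmi_Louise} by feeding it through the chain of implications assembled in Section~\ref{sec:upper}. Since $Q$ has rank at least $4$, Theorem~\ref{thm:mmi_Louise} tells us that $Q$ is Louise, hence so is the cluster algebra $\mathcal{A}(Q)$ it generates. The first step is then to invoke \cite[Proposition 2.6]{MS}, which states that a Louise cluster algebra is locally acyclic.

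Next I would apply Theorem~\ref{thm:LA_AU} (Muller's result \cite[Theorem 2]{M}), which guarantees that a locally acyclic cluster algebra coincides with its upper cluster algebra. Composing these two implications with the conclusion of Theorem~\ref{thm:mmi_Louise} yields $\mathcal{A}(Q) = \mathcal{U}(Q)$, which is exactly the claim. In short, the proof is just the composite
\[
Q \text{ minimal mutation-infinite, rank} \ge 4 \;\Longrightarrow\; Q \text{ Louise} \;\Longrightarrow\; \mathcal{A}(Q) \text{ locally acyclic} \;\Longrightarrow\; \mathcal{A}(Q)=\mathcal{U}(Q).
\]

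Because every link in this chain has already been established — the first by Theorem~\ref{thm:mmi_Louise}, the second and third by the cited theorems of Muller--Speyer and Muller — there is essentially no obstacle to overcome here; the only point requiring a moment's care is simply checking that the hypotheses match up (the rank restriction is inherited verbatim from Theorem~\ref{thm:mmi_Louise}, and the Louise property is being used for the quiver with no frozen vertices, which is the setting in which \cite[Proposition 2.6]{MS} and \cite[Theorem 2]{M} are stated). Accordingly I expect the proof to be a single short sentence citing these three results in sequence.
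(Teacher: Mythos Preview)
Your proposal is correct and matches the paper's approach exactly: the paper's proof is the single line ``Follows from Theorem~\ref{thm:LA_AU} and Theorem~\ref{thm:mmi_Louise},'' and your write-up simply makes explicit the intermediate step (Louise $\Rightarrow$ locally acyclic via \cite[Proposition 2.6]{MS}) that the paper leaves implicit.
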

\begin{proof}
Follows from Theorem~\ref{thm:LA_AU} and Theorem~\ref{thm:mmi_Louise}.
\end{proof}

\section{Quivers in the mutation class with maximal green sequences}\label{sec:mut_class_mgs}
Muller showed in \cite{muller} that in general the existence of a maximal green sequence is not mutation-invariant. This motivates the question of which quivers in a mutation class have a maximal green sequence. 

 Let $\Psi(Q)$ denote the (possibly empty) subgraph of the unlabelled quiver exchange graph $\Gamma(Q)$ consisting of the quivers that have a maximal green sequence. Rephrasing Theorem~\ref{thm:mu_finite_mgs} we have the following result. 
\begin{theorem}\label{thm:rank3_eg}
Let $Q$ be a mutation-finite quiver. Either the graph $\Psi=\Gamma$ or $\Psi$ is empty. 
\end{theorem}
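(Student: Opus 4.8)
The plan is to derive the dichotomy directly from Mills' classification, Theorem~\ref{thm:mu_finite_mgs}, by observing that for a mutation-finite quiver the property of admitting a maximal green sequence depends only on its mutation class. Since every quiver in $\mut(Q)$ is again mutation-finite, and $\Gamma(Q)$ has vertex set $\mut(Q)$, it suffices to prove: if some $R \in \mut(Q)$ admits a maximal green sequence, then every $R' \in \mut(Q)$ does. Granting this, either all vertices of $\Gamma(Q)$ lie in $\Psi(Q)$, giving $\Psi = \Gamma$, or none do, giving $\Psi = \emptyset$.

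First I would isolate the two families of ``bad'' mutation-finite quivers appearing in Theorem~\ref{thm:mu_finite_mgs}: those arising from a triangulation of a once-punctured closed surface, and the two quivers forming the mutation class of $\mathbb{X}_7$. The key observation is that each of these conditions describes a union of full mutation classes. For the $\mathbb{X}_7$ case this is immediate, since ``the mutation class of $\mathbb{X}_7$'' is by definition closed under mutation. For the surface case, one recalls from the work of Fomin--Shapiro--Thurston that mutation of the quiver associated to a triangulation corresponds to a flip of that triangulation, so the set of quivers obtained from triangulations of a fixed surface is mutation-closed; hence so is the union of these sets over all once-punctured closed surfaces.

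Now suppose $R \in \mut(Q)$ admits a maximal green sequence. By Theorem~\ref{thm:mu_finite_mgs}, $R$ lies in neither bad family. Since $\mut(R) = \mut(Q)$ and membership in a bad family is a mutation-class property, no quiver of $\mut(Q)$ lies in a bad family, and so by Theorem~\ref{thm:mu_finite_mgs} again every $R' \in \mut(Q)$ admits a maximal green sequence; thus $\Psi(Q) = \Gamma(Q)$. Contrapositively, if some quiver of $\mut(Q)$ fails to admit a maximal green sequence, then it lies in a bad family, hence so does every quiver of $\mut(Q)$, and $\Psi(Q)$ is empty. This yields the asserted dichotomy.

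I expect the only point requiring genuine care to be the mutation-invariance of ``arises from a triangulation of a once-punctured closed surface''; the rest is bookkeeping about the unlabelled exchange graph and the fact that mutation-finiteness is itself a mutation-class property. Since the statement is explicitly billed as a rephrasing of Theorem~\ref{thm:mu_finite_mgs}, no new idea beyond making this mutation-class observation precise should be needed.
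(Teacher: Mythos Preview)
Your argument is correct and is precisely what the paper intends: it offers no proof at all, merely introducing the statement with ``Rephrasing Theorem~\ref{thm:mu_finite_mgs} we have the following result.'' Your write-up simply makes explicit the mutation-class invariance of the two bad families, which is exactly the content behind that one-line justification.
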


Although there are infinitely many rank 3 quivers they only produce finitely many different exchange graphs.
\begin{theorem}\label{thm:finite_rank3_mgs}
Let $Q$ be a rank 3 quiver. If $Q$ is mutation-acyclic, then $\Psi$ is one the 7 graphs in Figures~\ref{fig:r3-lin-mgs-graphs} and~\ref{fig:r3-tri-mgs-graphs}, otherwise, $\Psi(Q)$ is empty. It then follows that the number of quivers in the mutation class of $Q$ is bounded and in particular $|\Psi| \leq 6$.
\end{theorem}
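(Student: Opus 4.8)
The plan is to separate the mutation-cyclic and mutation-acyclic cases, using the elementary dichotomy that every rank $3$ quiver is either acyclic or an oriented $3$-cycle $Q_{a,b,c}$, together with the basic observation --- which I would record first --- that every oriented $3$-cycle one of whose weights equals $1$ becomes acyclic after a single mutation at a suitable endpoint of the weight-$1$ edge: the $2$-path through that vertex produces arrows parallel to the opposite edge, and after cancellation with it the resulting configuration has the mutated vertex as a source and its neighbour as a sink, so the cycle is broken.

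\emph{The mutation-cyclic case.} If $Q$ is not mutation-acyclic, then every $R\in\mut(Q)$ is again an oriented $3$-cycle, since a mutation of an oriented $3$-cycle is either acyclic or another oriented $3$-cycle and the former would contradict mutation-cyclicity; and by the observation above no such $R$ can have a weight equal to $1$. So every $R\in\mut(Q)$ is an oriented $3$-cycle with all weights at least $2$, whence Theorem~\ref{thm:rank3_no_mgs} gives that no $R$ admits a maximal green sequence and $\Psi(Q)=\emptyset$.

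\emph{The mutation-acyclic case.} Theorem~\ref{thm:acyclic}, the preceding theorem (a weight-$1$ oriented $3$-cycle has a maximal green sequence), and Theorem~\ref{thm:rank3_no_mgs} together give the characterisation: a rank $3$ quiver $R$ admits a maximal green sequence exactly when $R$ is acyclic, or $R$ is an oriented $3$-cycle one of whose weights is $1$; thus $\Psi(Q)$ is the induced subgraph of $\Gamma(Q)$ on these quivers. The acyclic quivers of $\mut(Q)$ form a finite connected subgraph (the corollary to the Caldero--Keller theorem: sink/source mutations fix the underlying multigraph, which has finitely many orientations), and by the opening observation every weight-$1$ oriented $3$-cycle in $\mut(Q)$ is adjacent to an acyclic quiver; hence $\Psi(Q)$ lies in the closed neighbourhood of this finite subgraph, so it is finite and connected.

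It remains to enumerate the possibilities. By Caldero--Keller all acyclic quivers in $\mut(Q)$ share one underlying multigraph $\Delta$; if $\Delta$ is disconnected then $Q$ is a direct sum of acyclic quivers of smaller rank and $\Psi(Q)$ is a single vertex, and otherwise $\Delta$ is a path carrying two edge-multiplicities or a triangle carrying three. For each shape I would list the acyclic orientations up to isomorphism (three or four for a path, one or three for a triangle, according to coincidences among the multiplicities), compute for each the unique mutation that leaves the acyclic locus --- mutation at the vertex that is neither a source nor a sink, which produces an oriented $3$-cycle --- keep those cycles having a weight $1$, and record every adjacency among the resulting vertices, using Lemma~\ref{lem:opposite_quiver} to identify each case with its reverse. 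Running this over the finitely many subcases determined by which multiplicities equal $1$ yields precisely the $7$ graphs of Figures~\ref{fig:r3-lin-mgs-graphs} and~\ref{fig:r3-tri-mgs-graphs}, the largest being the one for the transitive orientation of a triangle with two single edges and one edge of multiplicity at least $2$, which has $6$ vertices; hence $|\Psi|\le 6$. The main obstacle is precisely this bookkeeping: one must be careful with isomorphisms of labelled $3$-vertex quivers --- in particular that cyclically permuting the weight triple of a transitive tournament changes its isomorphism class, so a triangle class can have three distinct acyclic vertices rather than one --- and one must check that the weight-$1$ cycles never leave the distance-$1$ neighbourhood of the acyclic locus, including the subcases where a weight-$1$ $3$-cycle is adjacent to a second one, so that $\Psi(Q)$ is strictly larger than ``the acyclic subgraph with pendants.''
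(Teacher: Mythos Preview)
Your argument is correct and arrives at the same conclusion as the paper, but by a genuinely different and more self-contained route. In the mutation-cyclic case the paper simply invokes \cite[Theorem~1.2]{bbh} to assert that every quiver in the class is an oriented $3$-cycle with all weights at least~$2$; you deduce this directly from your opening observation that any weight-$1$ oriented $3$-cycle becomes acyclic after one mutation. In the mutation-acyclic case the paper works \emph{outward}: it draws the candidate region, checks that each quiver on its boundary is some $Q_{i,j,k}$ with $i,j,k\ge 2$, and then appeals to \cite[Lemma~2.4]{assem}---a monotonicity statement that mutating such a $3$-cycle away from the vertex just used only increases the weights---to conclude one never returns to the region. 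You work \emph{inward}: having characterised the vertices of $\Psi$ intrinsically as ``acyclic, or oriented $3$-cycle with a weight-$1$ edge,'' you use the same opening observation to see that every vertex of the second kind is adjacent to one of the first kind, so $\Psi$ lies in the closed neighbourhood of the (finite, connected) acyclic locus. Your route avoids both external references and yields connectedness of $\Psi$ for free; the paper's route, via weight monotonicity, additionally shows that the complement of $\Psi$ is forward-closed under mutation away from the entry point, which is a little more than is needed here. Your count of ``one or three'' acyclic vertices for the triangle is right: the six acyclic orientations of a weighted triangle split into two sink/source orbits of size three, and by Caldero--Keller only one orbit lies in any given mutation class.
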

\begin{proof}
If $Q$ is not mutation-acyclic then by \cite[Theorem 1.2]{bbh} its entire mutation class consists of quivers of the form $Q_{a,b,c}$ with $a,b,c \geq 2$. Therefore by Theorem~\ref{thm:rank3_no_mgs} no quiver in the mutation class has a maximal green sequence and we may conclude that $\Psi(Q)$ is empty.

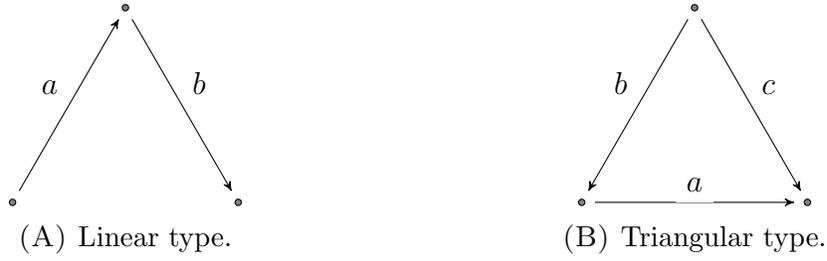
\begin{figure}[t]
\begin{minipage}{.45\linewidth}
	\centering
	\begin{tikzpicture}
		\tkzDefPoint(0,0){A};
		\tkzDefPoint(1.5,2.59){B};
		\tkzDefPoint(3,0){C};
		\tkzDrawPoints(A,B,C)
		\draw[->, shorten >=5, shorten <=5, >=stealth'] (A) to node[above left,fill=white]{$a$} (B);
		\draw[->, shorten >=5, shorten <=5, >=stealth'] (B) to node[above right,fill=white]{$b$} (C); 
	\end{tikzpicture}
	\subcaption{Linear type.}
	\label{fig:r3-acyclic-lin}
\end{minipage}
\begin{minipage}{.45\linewidth}
	\centering
	\begin{tikzpicture}
		\tkzDefPoint(0,0){A};
		\tkzDefPoint(3,0){B};
		\tkzDefPoint(1.5,2.59){C};
		\tkzDrawPoints(A,B,C)
		\draw[->, shorten >=5, shorten <=5, >=stealth'] (C) to node[above left,fill=white]{$b$} (A);
		\draw[->, shorten >=5, shorten <=5, >=stealth'] (C) to node[above right, fill=white]{$c$} (B); 
		\draw[->, shorten >=5, shorten <=5, >=stealth'] (A) to node[above,fill=white]{$a$} (B); 
	\end{tikzpicture}
	\subcaption{Triangular type.}
	\label{fig:r3-acyclic-tri}
\end{minipage}
\caption{The two types of (connected) acyclic rank 3 quivers which give
	different exchange graphs, for $a,b,c \in \mathbb{Z}_{>0}$.}
\label{fig:r3-acyclic}
\end{figure}

If $Q$ is mutation-acyclic then its mutation class must contain an acyclic
quiver $R$ in Figure~\ref{fig:r3-acyclic} for some $a,b,c \in \mathbb{Z}_{>
0}$. Clearly $\Psi(Q)=\Psi(R)$ so we may proceed by showing that $\Psi(R)$ is
one of the exchange graphs in
Figures~\ref{fig:r3-lin-mgs-graphs} and~\ref{fig:r3-tri-mgs-graphs}. Every vertex of
$\Gamma(R)$ that is not displayed in one of these figures corresponds to a
quiver $Q_{i,j,k}$ for some $i,j,k \geq 2$. Indeed, it is easy to check that
for any one step mutation that does not appear in these graphs the resulting
quiver is of the claimed form.
To see the claim for a longer mutation sequence, suppose that we mutate a
quiver in $\Psi(R)$ at vertex $v$ to obtain the quiver $Q_{i,j,k} \not \in
\Psi(R)$. Then according to Lemma 2.4 in~\cite{assem} if we mutate at $\ell \in
(Q_{i,j,k})_0 \setminus \{v\}$ we have
\[\mu_\ell (Q_{i,j,k}) = Q_{i',j',k'}\]
with $i' \geq i, j' \geq j, $ and $k' \geq k$. Therefore any mutation sequence
of length two yields a quiver without a maximal green sequence. 
Iterating this argument we see for any mutation sequence that moves along $\Gamma(R)$ outside of the region $\Psi(R)$ we will always obtain a quiver that is a 3-cycle which does not have a maximal green sequence since the number of edges will all be greater than or equal to 2.

\begin{figure}
	\begin{minipage}{\textwidth}
		\centering
		\includegraphics[scale=.65]{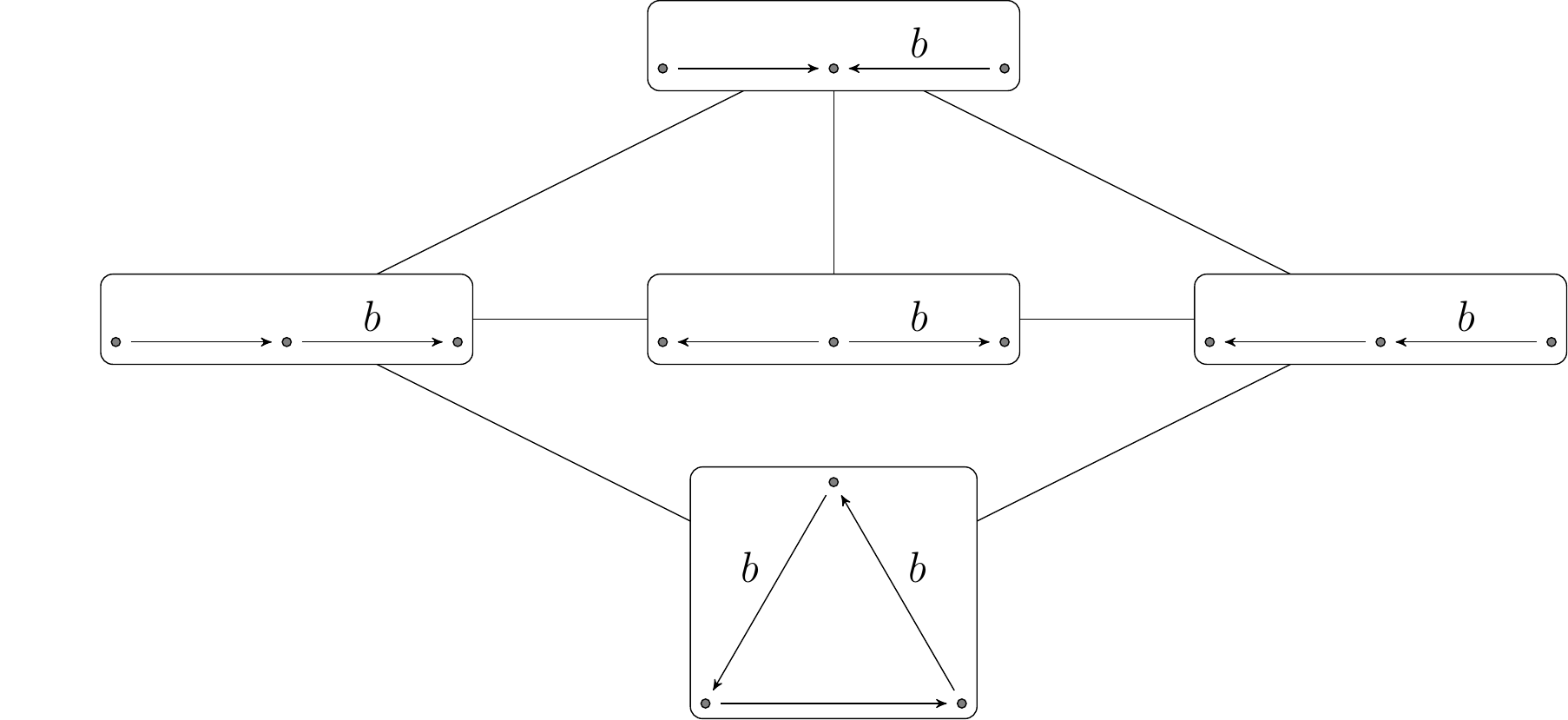}
		\subcaption{$b > a = 1$}
		\label{fig:rank3_case_1}
	\end{minipage}
	\begin{minipage}{\textwidth}
		\centering
		\includegraphics[scale=.65]{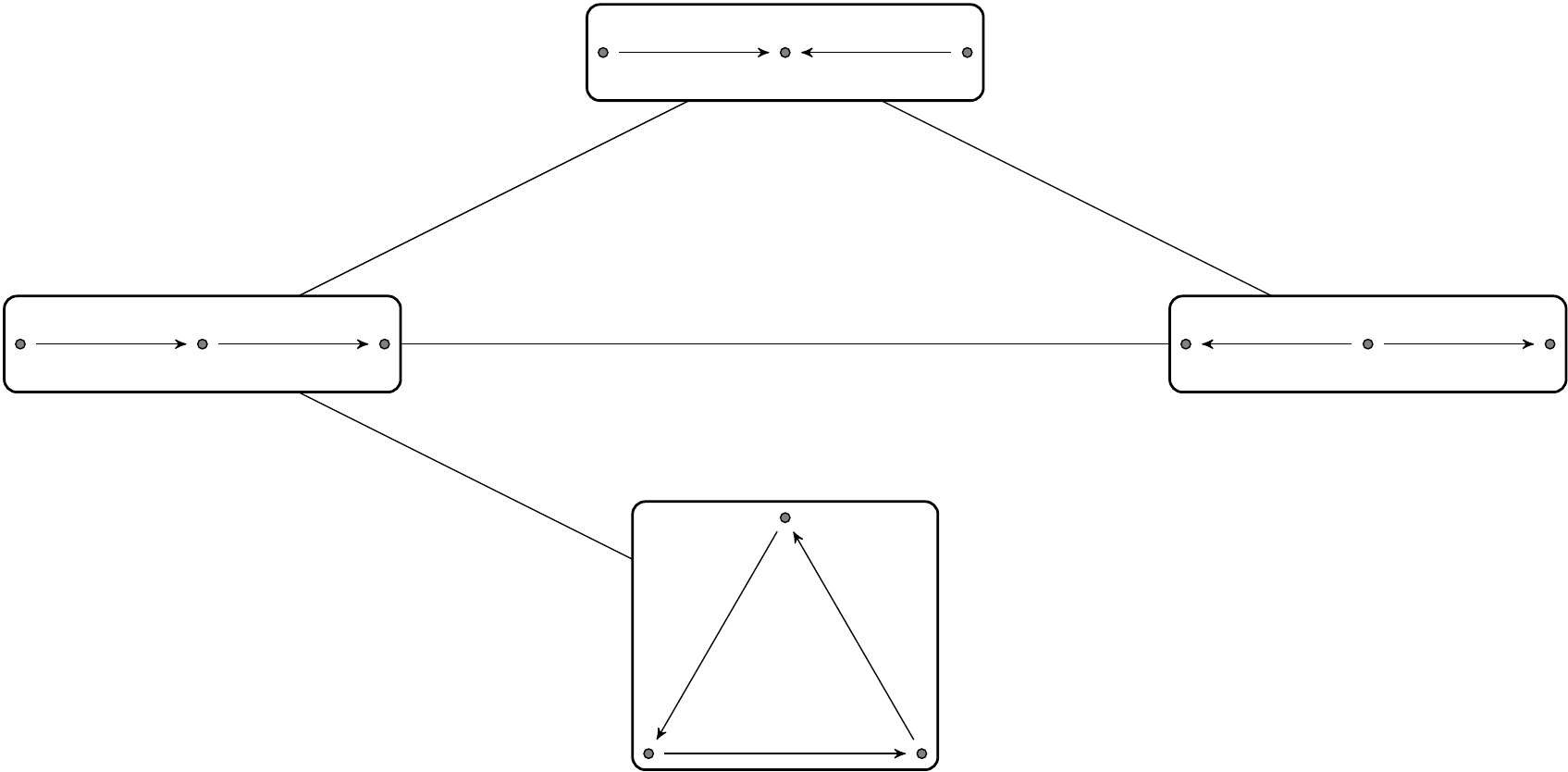}
		\subcaption{$a = b = 1$}
		\label{fig:rank3_case_2}
	\end{minipage}
	\\[6pt]
	\begin{minipage}{\textwidth}
		\centering
		\includegraphics[scale=.55]{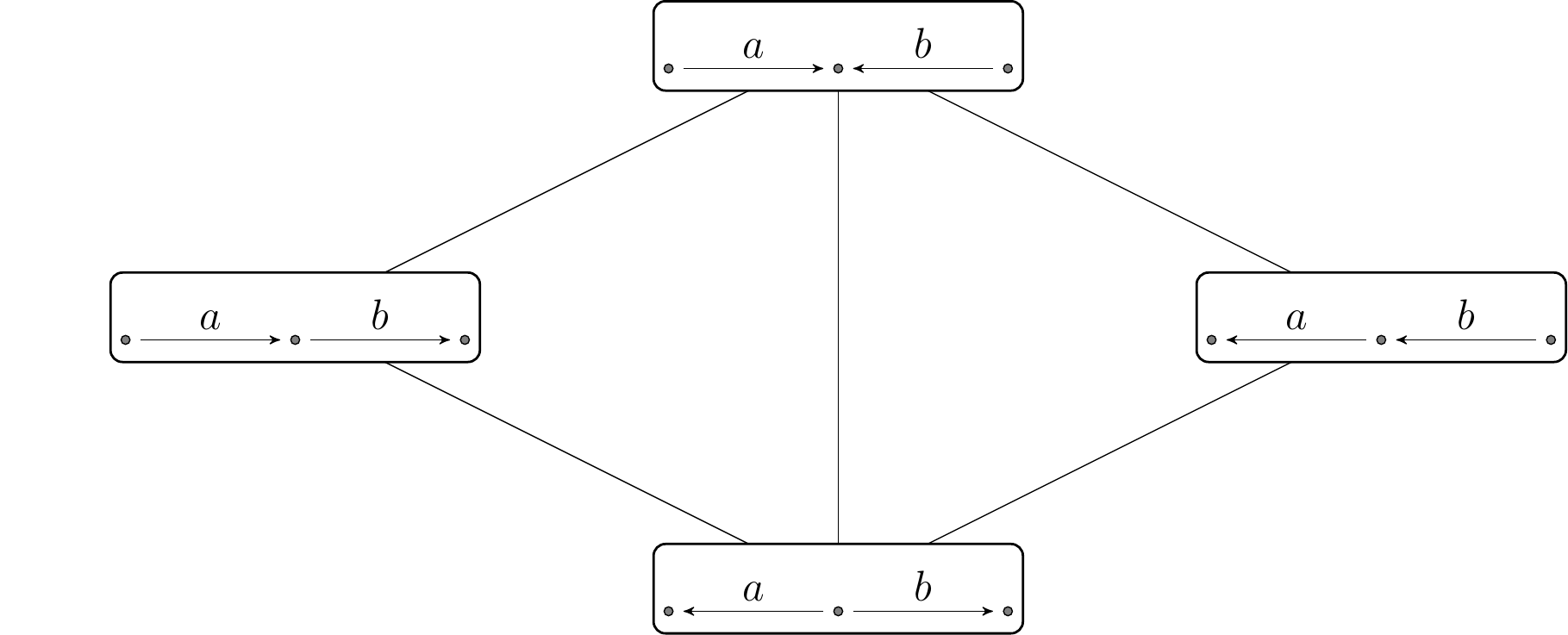}
		\subcaption{$b > a > 1$}
		\label{fig:rank3_case_3}
	\end{minipage}
	\begin{minipage}{\textwidth}
		\centering
		\includegraphics[scale=.55]{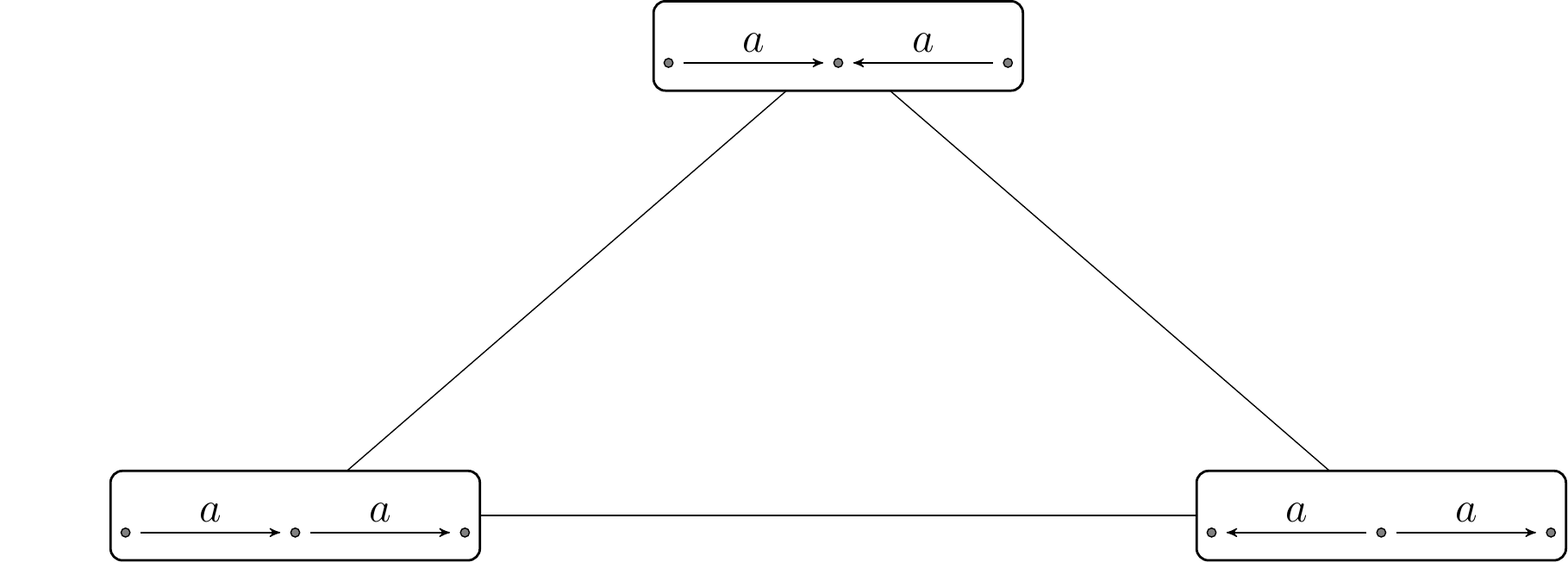}
		\subcaption{$b = a > 1$}
		\label{fig:rank3_case_4}
	\end{minipage}
	\caption{Subgraphs of the quiver exchange graphs showing only those quivers
	with maximal green sequences for the quiver appearing in
	Figure~\ref{fig:r3-acyclic-lin}.}
	\label{fig:r3-lin-mgs-graphs}
\end{figure}

Suppose $R$ is of the linear type in Figure~\ref{fig:r3-acyclic-lin}, then there are four possibilities for $\Psi(R)$, which are determined by the values of $a$ and $b$. The four cases are: \begin{enumerate}
\item$b > a = 1$; \item$b = a = 1$; \item $b >a > 1$; \item $b = a > 1$,
\end{enumerate}
with corresponding graphs $\Psi(R)$ shown in Figure~\ref{fig:r3-lin-mgs-graphs}. 

\begin{figure}
	\begin{minipage}{.48\textwidth}
		\centering
		\includegraphics[scale=.6]{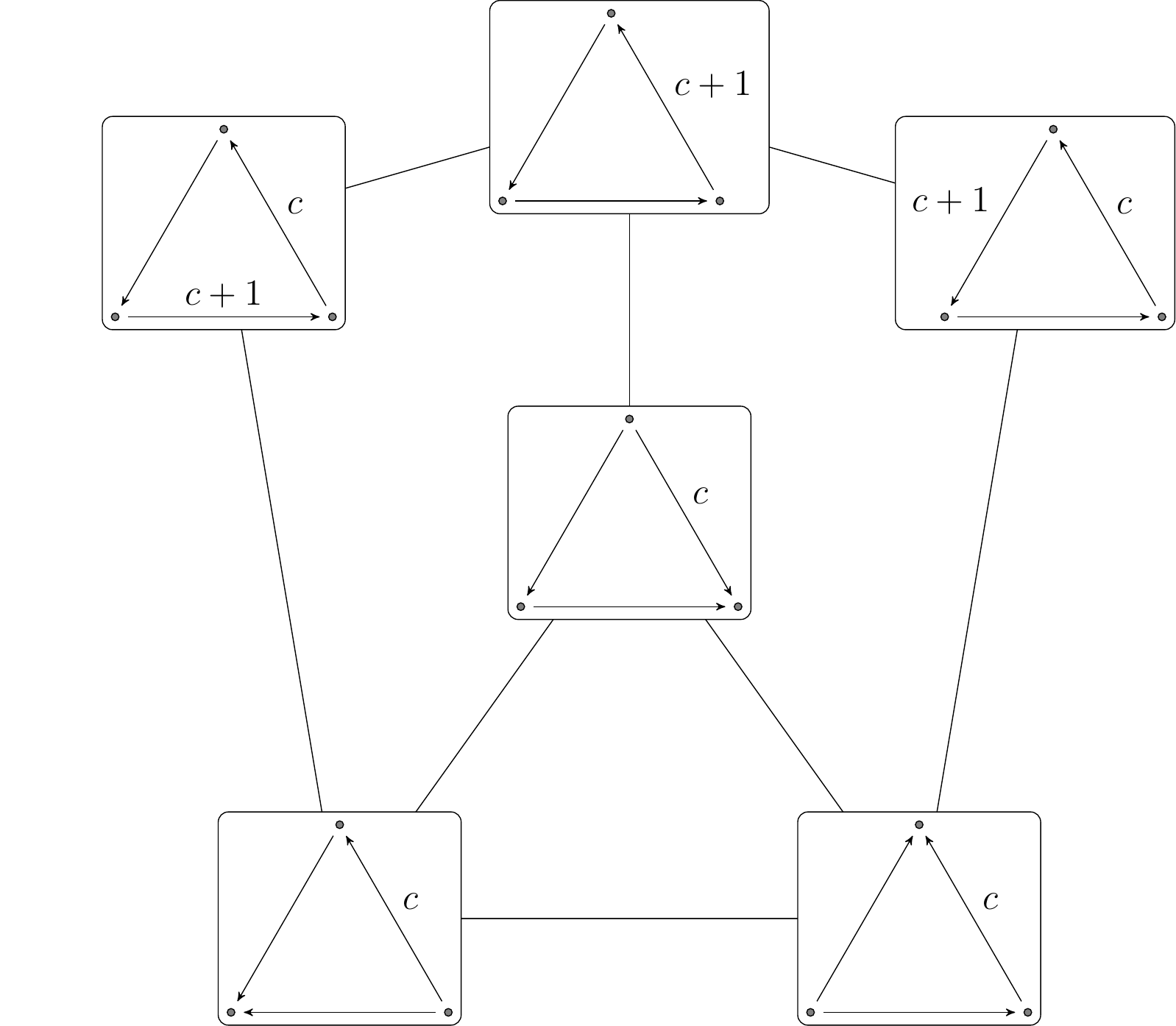}
		\subcaption{$a = b = 1;\; c > 1$}
		\label{fig:rank3_case_5}
	\end{minipage}
	\begin{minipage}{.48\textwidth}
		\centering
		\includegraphics[scale=.6]{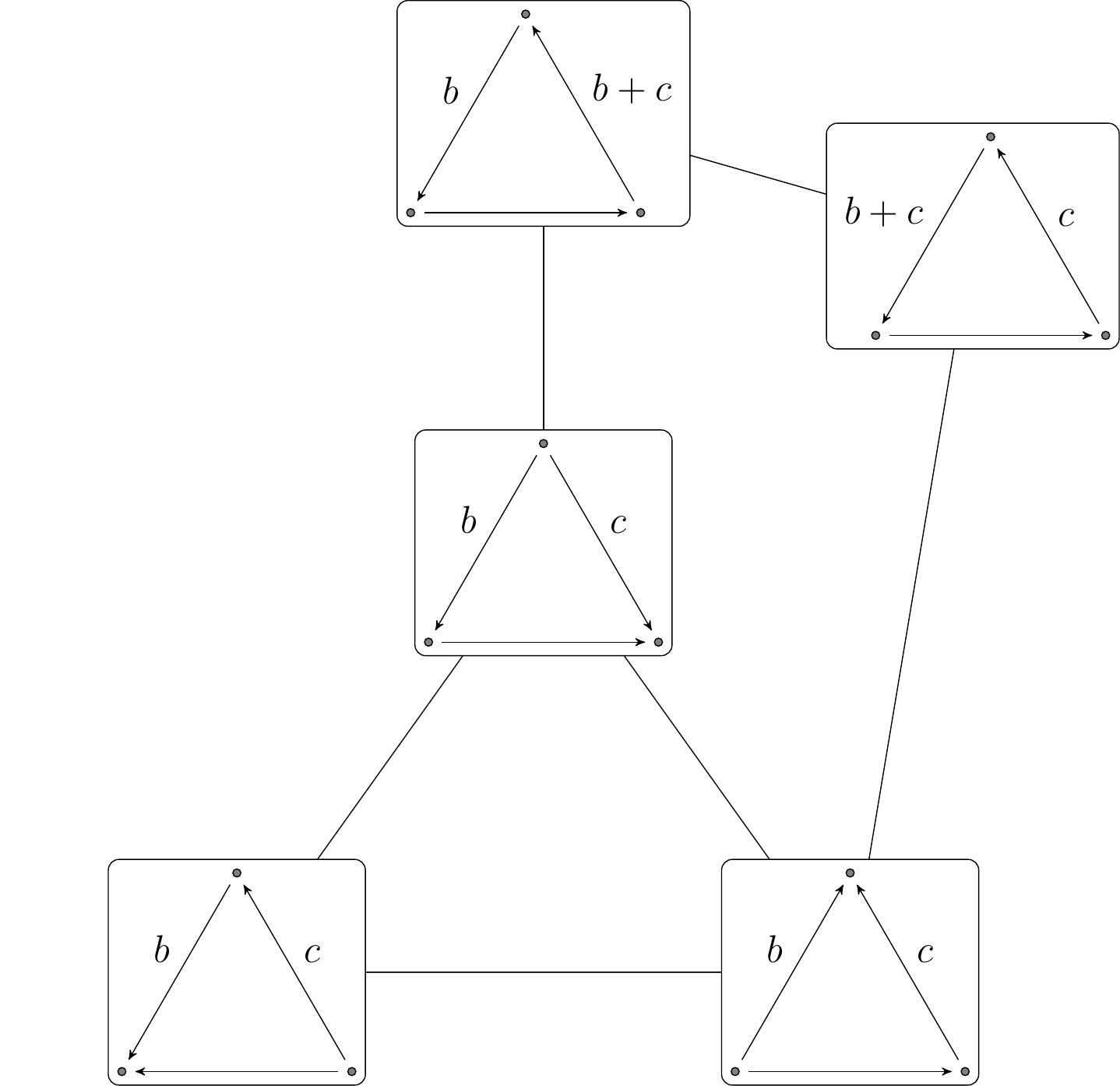}
		\subcaption{$a = 1;\; b, c > 1$}
		\label{fig:rank3_case_7}
	\end{minipage}
	\\[10pt]
	\begin{minipage}{.45\textwidth}
		\centering
		\includegraphics[scale=.6]{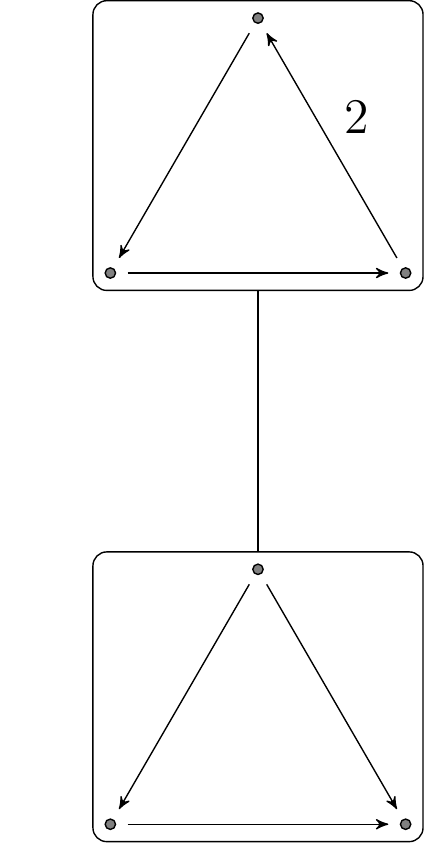}
		\subcaption{$a = b = c = 1$}
		\label{fig:rank3_case_6}
	\end{minipage}
	\begin{minipage}{.45\textwidth}
		\centering
		\includegraphics[scale=.6]{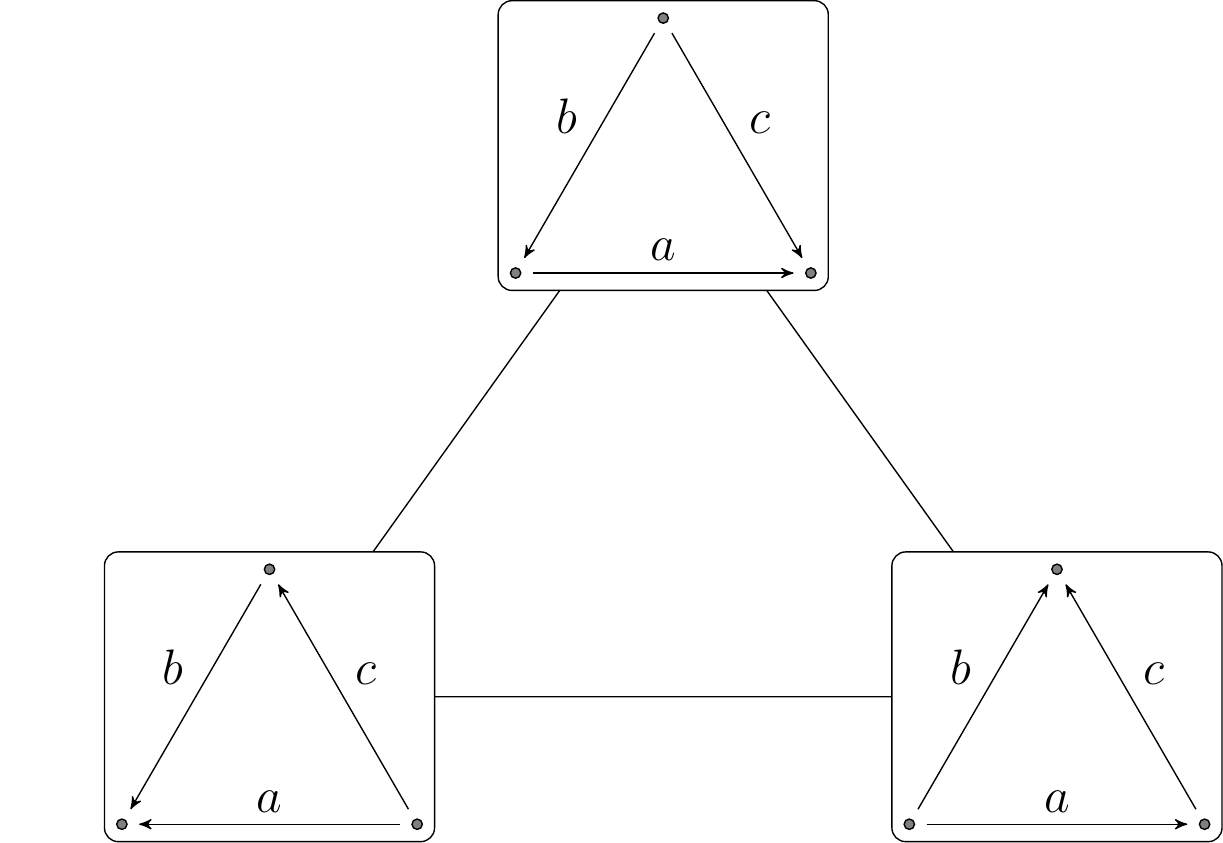}
		\subcaption{$a, b, c > 1$}
		\label{fig:rank3_case_8}
	\end{minipage}
	\caption{Subgraphs of the quiver exchange graphs showing only those quivers
	with maximal green sequences for the quiver appearing in
	Figure~\ref{fig:r3-acyclic-tri}.}
	\label{fig:r3-tri-mgs-graphs}
\end{figure}

If $R$ is of the triangular type in Figure~\ref{fig:r3-acyclic-tri} we again have four cases to consider:
\begin{enumerate}
\item $ b = a = 1$, $c >1$;
\item $b,c > 1, a = 1$;
\item $c = b = a = 1$;
\item $a,b,c >1$,
\end{enumerate}
with corresponding graphs shown in Figure~\ref{fig:r3-tri-mgs-graphs}. Note that Figure~\ref{fig:rank3_case_4} and Figure~\ref{fig:rank3_case_8} give isomorphic graphs for $\Psi(R)$. Hence, there are 7 possibilities for a mutation-acyclic rank 3 quiver. The largest such graph has 6 vertices. 
\end{proof}

We now prove a result similar to Theorem~\ref{thm:rank3_eg} for rank 4 minimal mutation-infinite quivers. When constructing $\Phi$ from $\Gamma$ we can use Theorem~\ref{thm:subquiver} and Theorem~\ref{thm:rank3_no_mgs} to remove several vertices from $\Gamma$. However, this approach is not sufficient to eliminate all quivers that do not admit a maximal green sequence. To handle this we introduce the notion of a good mutation sequence, which is a slight generalization of a maximal green sequence, to show that other quivers appearing in $\mut(Q)$ do not have a maximal green sequence. 
\begin{definition}
A vertex $k$ of a quiver $Q$ is called a \hdef{good vertex} if\begin{enumerate}
\item $k$ is not the head of a multiple edge;
\item  and $\mu_k(Q)$ does not contain an induced subquiver that does not admit a maximal green sequence. 
\end{enumerate}
A mutation sequence is called a \hdef{good sequence} if at every step of the mutation sequence we mutate at a good vertex. 
\end{definition}
As mentioned above, all maximal green sequences are good sequences. 
\begin{lemma}\label{lem:rotation_block}
Let $Q$ be a quiver. If $R \in \mut(Q)$ does not have a maximal green sequence then there is no maximal green sequence for $Q$ that passes through $R$. 
\end{lemma}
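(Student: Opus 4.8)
The plan is to deduce this directly from the Rotation Lemma (Theorem~\ref{thm:rotationlem}) by contradiction. Suppose $\mathbf{i} = (i_1, \dots, i_\ell)$ is a maximal green sequence for $Q$ that passes through $R$; concretely, this means that for some index $1 \le m \le \ell$ the intermediate quiver $Q_m := \mu_{i_m} \circ \cdots \circ \mu_{i_1}(Q)$ is isomorphic to $R$ (the case where $R$ is the starting quiver $Q$ is subsumed, since applying the Rotation Lemma $\ell$ times and using Lemma~\ref{lem:iso_mgs} returns a maximal green sequence for a quiver isomorphic to $Q$). The goal is then to manufacture a maximal green sequence for $Q_m$, hence for $R$, contradicting the hypothesis.

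First I would iterate the Rotation Lemma. Let $\sigma$ be the permutation induced by $\mathbf{i}$. Theorem~\ref{thm:rotationlem} gives a maximal green sequence for $\mu_{i_1}(Q) = Q_1$ \emph{with the same induced permutation} $\sigma$; since that hypothesis on the permutation is reproduced, I can apply the lemma again to obtain a maximal green sequence for $\mu_{i_2}(Q_1) = Q_2$, again with induced permutation $\sigma$. Formally this is an induction on $m$ whose inductive step is a single invocation of Theorem~\ref{thm:rotationlem}; after $m$ steps it produces a maximal green sequence for $Q_m$.

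Next I would record the routine fact that admitting a maximal green sequence is an isomorphism invariant of quivers: if $\phi$ realizes the isomorphism $Q_m \cong R$, it extends canonically to an isomorphism $\widehat{Q_m} \cong \widehat{R}$ of framed quivers carrying frozen vertices to frozen vertices, hence sends green vertices to green vertices and red to red at every stage, so the image under $\phi$ of a maximal green sequence for $Q_m$ is a maximal green sequence for $R$. This contradicts the assumption that $R$ has no maximal green sequence, and the lemma follows.

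There is no genuine obstacle here — the statement is essentially a corollary of the Rotation Lemma — so the only two points that merit care are (i) making the iteration of Theorem~\ref{thm:rotationlem} rigorous, which requires noting that the induced permutation is genuinely preserved at each rotation (this is part of the statement of that theorem, so it goes through), and (ii) the isomorphism-invariance remark, which lets us transfer the conclusion from the intermediate quiver $Q_m$ to the chosen representative $R$ of its class in $\mut(Q)$.
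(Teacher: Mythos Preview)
Your proof is correct and follows the same approach as the paper: both argue by contradiction, invoking the Rotation Lemma (Theorem~\ref{thm:rotationlem}) to produce a maximal green sequence for $R$ from one for $Q$ passing through it. The paper's version is a one-line sketch, whereas you spell out the iteration of the rotation and the isomorphism-invariance step explicitly, but the underlying argument is identical.
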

\begin{proof}
If there was such a maximal green sequence then by Theorem~\ref{thm:rotationlem} $R$ would have a maximal green sequence which contradicts our assumption. 
\end{proof}
\begin{lemma}\cite[Theorem 4]{brustle2}\label{lem:no_multi_head}
A maximal green sequence never mutates at the head of a multiple edge. 
\end{lemma}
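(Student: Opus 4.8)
The plan is to use the Rotation Lemma to reduce to the \emph{first} mutation of a maximal green sequence, translate the statement into the language of red and green vertices of the framed quiver, and then locate the obstruction.

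Suppose $\mgsi=(i_1,\dots,i_\ell)$ is a maximal green sequence for $Q$ with induced permutation $\sigma$, and suppose for contradiction that for some $t$ the vertex $k:=i_t$ is the head of a multiple edge $a\to k$ (that is, $b_{ak}\ge 2$) in the intermediate quiver $Q^{(t-1)}:=\mu_{i_{t-1}}\circ\cdots\circ\mu_{i_1}(Q)$, where $Q^{(0)}=Q$. Applying Theorem~\ref{thm:rotationlem} a total of $t-1$ times shows that $(i_t,i_{t+1},\dots,i_\ell,\sigma^{-1}(i_1),\dots,\sigma^{-1}(i_{t-1}))$ is a maximal green sequence for $Q^{(t-1)}$. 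Replacing $Q$ by $Q^{(t-1)}$ we may therefore assume $t=1$: some maximal green sequence for $Q$ begins with $\mu_k$, where $k$ is the head of a multiple edge $a\to k$ in $Q$.

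Next I would examine the framed quiver $\hat Q$, in which every mutable vertex is green. Because $k$ is the head of the $m$-fold edge $a\to k$ with $m\ge 2$, a direct inspection of the mutation rule of Definition~\ref{def:mutation} gives, in $\mu_k(\hat Q)$: the vertex $k$ is red, since its only adjacent frozen arrow is $k'\to k$; the reversed edge $k\to a$ is still an $m$-fold edge, now with head $a$; and the vertex $a$ is green, since its adjacent frozen arrows become $a\to a'$ together with the $m$ new arrows $a\to k'$, all pointing out of $a$. So after a single mutation we are in the configuration: a green vertex $a$ that is the head of a multiple edge, with the remaining relevant vertex $k$ red. It therefore suffices to show that such a configuration cannot be completed to the all-red seed by a green sequence that avoids the head of every multiple edge. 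In rank $2$ this is immediate: $k$ is red so only $a$ may be mutated, and $\mu_a$ (again by the same inspection) returns the identical configuration with the roles of $a$ and $k$ swapped, so an induction on the remaining length always leaves a green vertex, contradicting maximality.

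The higher-rank version of this last claim is the real obstacle, and it is precisely \cite[Theorem~4]{brustle2}. I would establish it through the representation-theoretic reading of the $c$-vectors flipped along a maximal green sequence: by Theorem~\ref{thm:signcoh} and \cite{derksen,gross} these $c$-vectors are the dimension vectors of a complete forward hom-orthogonal family of bricks of the Jacobian algebra of $Q$; the green vertices of each intermediate seed correspond to the relatively simple bricks of the associated wide subcategory, and the number of arrows between two of them equals the dimension of the relevant $\operatorname{Ext}^1$. A green vertex that is the head of an $m$-fold edge then records an $\operatorname{Ext}^1$ of dimension $m\ge 2$ into a relative simple, and the content of the argument is that such a brick cannot be among those crossed inside a maximal green sequence. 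Making this stability/brick estimate precise is where essentially all of the work is; the reduction to the first mutation and the red/green bookkeeping above are routine.
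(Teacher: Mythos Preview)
The paper does not prove this lemma: it is quoted verbatim from \cite[Theorem~4]{brustle2} and used as a black box. So there is no ``paper's own proof'' to compare against; your attempt should be judged on whether it stands on its own.

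Your reduction via the Rotation Lemma to the case where the very first mutation is at the head of a multiple edge is correct and is a clean way to organise the argument. The rank~$2$ analysis is essentially right in spirit, but your claim that $\mu_a$ ``returns the identical configuration with the roles of $a$ and $k$ swapped'' is false: the arrows to the frozen vertices proliferate (for instance, after $\mu_a\mu_k$ one already has $m^2-1$ arrows $k\to k'$). What is actually preserved is the weaker invariant ``exactly one mutable vertex is green, and it is the head of an $m$-fold edge with $m\ge 2$''; that invariant does force the green sequence and shows it never reaches the all-red seed, so the rank~$2$ conclusion survives, but the stated reason does not.

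The genuine gap is in the general rank. You yourself say that ``the higher-rank version of this last claim is the real obstacle, and it is precisely \cite[Theorem~4]{brustle2}'', and then offer only a loose description of the brick/stability framework without an argument. In particular, you never explain \emph{why} a relative simple admitting an $\operatorname{Ext}^1$ of dimension $\ge 2$ from another relative simple cannot be crossed by a maximal green sequence; that is exactly the content of the theorem you are trying to prove, so the sketch is circular. As written, the proposal is a valid reduction together with a rank~$2$ special case, followed by a restatement of the cited result rather than a proof of it.
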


\begin{corollary}\label{cor:max_are_good}
A maximal green sequence is a good sequence. 
\end{corollary}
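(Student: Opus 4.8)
The plan is to check directly that every mutation occurring in a maximal green sequence is performed at a good vertex. Let $\mgsi = (i_1, \dotsc, i_\ell)$ be a maximal green sequence for $Q$, write $Q_0 = Q$, and set $Q_k = \mu_{i_k} \circ \dotsb \circ \mu_{i_1}(Q)$ for $1 \le k \le \ell$. At the $k$-th step we mutate $Q_{k-1}$ at $i_k$, so it suffices to show that $i_k$ is a good vertex of $Q_{k-1}$: that $i_k$ is not the head of a multiple edge of $Q_{k-1}$, and that $\mu_{i_k}(Q_{k-1}) = Q_k$ contains no induced subquiver without a maximal green sequence.

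First I would dispatch condition (1) in the definition of a good vertex. This is exactly Lemma~\ref{lem:no_multi_head}, which states that a maximal green sequence never mutates at the head of a multiple edge; hence for every $k$ the vertex $i_k$ is not the head of a multiple edge in $Q_{k-1}$.

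For condition (2), the key point is that every intermediate quiver $Q_k$ itself admits a maximal green sequence. This follows by applying the Rotation Lemma (Theorem~\ref{thm:rotationlem}) $k$ times: starting from the maximal green sequence $\mgsi$ of $Q_0$, one rotation yields a maximal green sequence of $Q_1$, and iterating gives one for $Q_k$; equivalently, one may invoke Lemma~\ref{lem:rotation_block}, since $\mgsi$ is a maximal green sequence for $Q$ passing through $Q_k$. Once we know $Q_k$ has a maximal green sequence, Theorem~\ref{thm:subquiver} guarantees that every induced subquiver of $Q_k$ also has a maximal green sequence, so $Q_k$ contains no induced subquiver failing to admit one, and $i_k$ satisfies condition (2). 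Combining the two conditions, $\mgsi$ is a good sequence.

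I do not expect a genuine obstacle: the statement is essentially a bookkeeping combination of Lemma~\ref{lem:no_multi_head}, the Rotation Lemma, and Theorem~\ref{thm:subquiver}. The only subtlety is that conditions (1) and (2) of ``good vertex'' must be read relative to the current intermediate quiver $Q_{k-1}$, not the original $Q$; the Rotation Lemma is precisely what lets us transfer the maximal-green-sequence hypothesis from $Q$ to every $Q_{k-1}$ (and to every $Q_k$), so this causes no difficulty.
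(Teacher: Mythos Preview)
Your proposal is correct and follows essentially the same approach as the paper: the paper's proof simply cites Theorem~\ref{thm:subquiver}, Lemma~\ref{lem:rotation_block}, and Lemma~\ref{lem:no_multi_head} as a direct combination, and your argument just unpacks these ingredients explicitly (using the Rotation Lemma in place of the contrapositive of Lemma~\ref{lem:rotation_block}, which is equivalent). Your care in noting that condition~(2) must be checked at $\mu_{i_k}(Q_{k-1}) = Q_k$ rather than at $Q_{k-1}$ is exactly right.
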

\begin{proof}
This is a direct result of Theorem~\ref{thm:subquiver}, Lemma~\ref{lem:rotation_block},  and Lemma~\ref{lem:no_multi_head}.
\end{proof}
To obtain our result for rank 4 minimal mutation-infinite quivers it is sufficient to show that a particular family of quivers do not have a maximal green sequence. 

Let $Q$ be a quiver. The opposite quiver $Q^\text{op}$ is the quiver obtained from $Q$ by reversing all of the edges of $Q$. 
\begin{lemma}\label{lem:good_verticesR}
Let $R_{a,b,c}$ denote the rank 4 quiver given in Figure~\ref{fig:rank4_bad} with $b,c \geq 2$ and $a \leq c - 2$. The only good vertex of $R_{a,b,c}$ is vertex 3 and the only good vertex of $(R_{a,b,c})^\text{op}$ is vertex 2. 
That is the only good vertex is the sink in the induced subquiver consisting of the vertices 1,2, and 3. 
\end{lemma}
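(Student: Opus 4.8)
The proof is a case check on the four vertices of $R_{a,b,c}$, and essentially the only obstruction one needs is Theorem~\ref{thm:rank3_no_mgs}: the oriented $3$-cycle $Q_{i,j,k}$ has no maximal green sequence as soon as $i,j,k\ge 2$. So to show a vertex $k$ is \emph{not} good it suffices either to observe that $k$ is already the head of a multiple edge of $R_{a,b,c}$ (violating condition $(1)$ of the definition), or to locate in $\mu_k(R_{a,b,c})$ three vertices spanning an oriented $3$-cycle all of whose multiplicities are $\ge 2$ (violating condition $(2)$, by Theorem~\ref{thm:rank3_no_mgs}); to show that vertex $3$ \emph{is} good I must verify both conditions directly. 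The first thing I would do is extract from Figure~\ref{fig:rank4_bad} the precise edge data: the induced subquiver $R_{a,b,c}[\{1,2,3\}]$ is acyclic with source $2$ and sink $3$, and one records how vertex $4$ attaches and which of $a,b,c$ sits on which edge.

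For $k=1$: since $1$ lies strictly between the source and the sink of the acyclic triangle $R_{a,b,c}[\{1,2,3\}]$, mutating at $1$ reverses its two incident triangle-arrows and adds the arrow dictated by the $2$-path through $1$; chasing this (and the arrows to vertex $4$) through $\mu_1$ one obtains an induced oriented $3$-cycle on three of the four vertices whose three multiplicities are, after using $b\ge 2$, $c\ge 2$ and $a\le c-2$, all at least $2$ — the point being that the newly created arrow-count is a sum that includes a product of two positive integers and hence only grows. For $k=2$ and $k=4$ the same strategy applies, except that the offending cycle necessarily involves vertex $4$ rather than living inside $R_{a,b,c}[\{1,2,3\}]$; here one also uses that, depending on the attachment of vertex $4$, one of these two vertices is simply the head of the multiple edge carrying the label with value $\ge 2$, which already kills condition $(1)$.

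For $k=3$ I would argue as follows. Being the sink of the triangle, vertex $3$ is not the head of a multiple edge — this is read directly off Figure~\ref{fig:rank4_bad}, where every arrow into $3$ is single — so condition $(1)$ holds. For condition $(2)$, compute $\mu_3(R_{a,b,c})$: because $3$ is a local sink no new $2$-paths are created among $1,2,3$, so no multiplicity is forced up past those already present in $R_{a,b,c}$, and running through the (finitely many) rank-$3$ induced subquivers of $\mu_3(R_{a,b,c})$ one sees each is acyclic or is an oriented $3$-cycle with an edge of multiplicity $1$, hence has a maximal green sequence by Theorem~\ref{thm:acyclic} and Theorem~\ref{thm:rank3_no_mgs}; moreover $\mu_3(R_{a,b,c})$ itself has a maximal green sequence — either because it ends in a $3$-cycle, so Theorem~\ref{thm:triangleend} and Corollary~\ref{cor:mgs_subquivers} apply, or because it is again minimal mutation-infinite, so Theorem~\ref{thm:mmi_mgs} applies. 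Thus $\mu_3(R_{a,b,c})$ contains no induced subquiver without a maximal green sequence and vertex $3$ is good. Finally, for $(R_{a,b,c})^{\mathrm{op}}$ the identical analysis applies after reversing every arrow: this interchanges the source and sink of $R_{a,b,c}[\{1,2,3\}]$, so vertex $2$ now occupies the place vertex $3$ held and is the unique good vertex, while the disqualification of $1$, $3$ and $4$ goes through verbatim with arrows reversed. (Note Lemma~\ref{lem:opposite_quiver} cannot be invoked to shortcut this: it concerns maximal green sequences of the whole quiver, and indeed goodness of a vertex is visibly not preserved by $Q\mapsto Q^{\mathrm{op}}$.)

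The step I expect to be the real obstacle is the $k\in\{2,4\}$ case: unlike $k=1$, the bad subquiver it produces does not sit inside $R_{a,b,c}[\{1,2,3\}]$ but genuinely involves vertex $4$, so the argument is sensitive to the exact wiring of vertex $4$ and to the inequality $a\le c-2$; carefully tracking how the edge-multiplicities propagate through $\mu_2$ and $\mu_4$ — while simultaneously being sure that no analogous bad subquiver can arise after $\mu_3$ — is where the content of the lemma lies.
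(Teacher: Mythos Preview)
Your overall case-check strategy matches the paper's, but you have misread Figure~\ref{fig:rank4_bad} in a way that makes the argument much harder than it is. The arrows $4\to 2$ and $3\to 4$ carry multiplicities $b$ and $c$ respectively, and both are $\ge 2$ by hypothesis. Hence \emph{both} vertices $2$ and $4$ are heads of multiple edges, so both are disqualified immediately by condition~(1). No mutation is required, and your anticipated ``real obstacle'' at $k\in\{2,4\}$ simply does not exist. For $k=1$ the paper does exactly what you outline: one computes $\mu_1(R_{a,b,c})[\{2,3,4\}]\simeq Q_{2,\,c-a,\,b}$, and since $c-a\ge 2$ and $b\ge 2$ this has no maximal green sequence by Theorem~\ref{thm:rank3_no_mgs}. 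Those three sentences are the entire proof given in the paper.

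Your verification that vertex $3$ actually \emph{is} good is additional --- the paper omits it, because the downstream application (Corollary~\ref{cor:good_sequenceR} and Lemma~\ref{lem:bad_rank4}) only needs that a good sequence \emph{cannot} mutate at $1$, $2$, or $4$. Your argument for this part also has a gap: you invoke Theorem~\ref{thm:mmi_mgs} and Corollary~\ref{cor:mgs_subquivers}, but for general $b,c\ge 2$ the quiver $R_{a,b,c}$ is not minimal mutation-infinite (it typically contains mutation-infinite rank-$3$ subquivers), so those results do not apply to $\mu_3(R_{a,b,c})$. If you do want to prove goodness of vertex~$3$, you would need to compute $\mu_3(R_{a,b,c})$ explicitly and check its induced subquivers directly; note in particular that $3$ is \emph{not} a sink of the full quiver (there is the arrow $3\to 4$ of multiplicity $c$), so mutation at $3$ does create new $2$-paths involving vertex $4$.
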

\begin{proof}
If we mutate at vertex 1, then $\mu_1(R_{a,b,c})[\{2,3,4\}] \simeq Q_{2,c-a,b}$ which does not have a maximal green sequence by Theorem~\ref{thm:rank3_no_mgs} so vertex 1 is not a good vertex. The vertices 2 and 4 are not good vertices since they are heads of multiple arrows as $b,c \geq 2$. 

\end{proof}

\begin{figure}[t]
	\begin{tikzpicture}
	\coordinate (A) at (0,0);
	\coordinate (B) at (4,0);
	\coordinate (C) at (2,1.73);
	\coordinate (D) at (2,3.46);
	\draw[fill=gray] (A) circle (1.5pt);
	\draw[fill=gray] (B) circle (1.5pt);
	\draw[fill=gray] (C) circle (1.5pt);
	\draw[fill=gray] (D) circle (1.5pt);
	\draw[->, shorten >=5, shorten <=5, >=stealth'] (C) to(A);
	\draw[->, shorten >=5, shorten <=5, >=stealth'] (C) to (B);
	\draw[->, shorten >=5, shorten <=5, >=stealth'] (A) to (B);
	\draw[->, shorten >=5, shorten <=5, >=stealth'] (D) to node[above left, fill=white]{$a$}  (A) node[left]{1};
	\draw[->, shorten >=5, shorten <=5, >=stealth'] (B) node[right]{3} to node[above right, fill=white]{$c$}  (D);
	\draw[->, shorten >=5, shorten <=5, >=stealth'] (D) node[left]{4} to node[right]{$b$}  (C) node[below]{2};
	\end{tikzpicture}
	\caption{The rank $4$ quiver $R_{a,b,c}$ for $b,c \geq 2$ and $a \leq c - 2$. This
	quiver has only one good vertex, labelled $3$.}
	\label{fig:rank4_bad}
\end{figure}
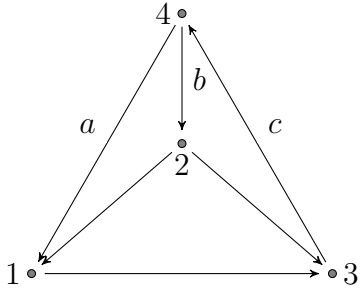

\begin{corollary}\label{cor:good_sequenceR}
Assume $c-a\geq 2$ and $|c-b-a| > 0$. \begin{enumerate}
\item If $c>b \geq 2$ and $\mgsi$ is a good sequence for $R_{a,b,c}$ of length $k$ we have
\[ \mu_{\mgsi}(R_{a,b,c})=\begin{cases} R_{a,b+n(c-b-a),c+n(c-b-a)} & \text{if } k = 2n,\\
(R_{c-b,c+n(c-b-a),b+(n+1)(c-b-a)})^\text{op} & \text{if } k = 2n+1.\end{cases}\]

\item If $b > c \geq 2$ and $\mgsi$ is a good sequence for $(R_{a,b,c})^\text{op}$ of length $k$ we have
\[ \mu_{\mgsi}((R_{a,b,c})^\text{op})=\begin{cases} (R_{a,b+n(b+a-c),c+n(b+a-c)})^\text{op} & \text{if } k = 2n,\\
(R_{b-c,c+(n+1)(b+a-c),b+n(b+a-c)})^\text{op} & \text{if } k = 2n+1.\end{cases}\]
\end{enumerate}
\end{corollary}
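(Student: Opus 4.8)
The plan is to prove both parts by induction on the length $k$ of the good sequence, carrying out Part~(1) in detail; Part~(2) is proved by the same method on the opposite quivers. The base case $k=0$ is the identity. For the inductive step, suppose $\mgsi=(i_1,\dots,i_{k+1})$ is a good sequence for $R_{a,b,c}$. Then $(i_1,\dots,i_k)$ is a good sequence of length $k$, so by the inductive hypothesis the intermediate quiver $Q_k:=\mu_{i_k}\circ\cdots\circ\mu_{i_1}(R_{a,b,c})$ is the quiver on the right-hand side of the formula for length $k$ --- an $R$-type quiver when $k$ is even and an $R^{\text{op}}$-type quiver when $k$ is odd, with the displayed parameters.

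The crucial step is to pin down $i_{k+1}$. Every $Q_k$ consists of an acyclic triangle on three of its vertices together with a fourth vertex $v$ joined to all three, so Lemma~\ref{lem:good_verticesR} (applied to $Q_k$, resp.\ to its opposite) identifies the unique good vertex as the sink of that triangle: the triangle's source and the vertex $v$ are heads of multiple edges by the standing inequalities on the parameters, and mutating at either of them produces an induced rank-$3$ subquiver of the form $Q_{p,q,r}$ with $p,q,r\ge 2$, which has no maximal green sequence by Theorem~\ref{thm:rank3_no_mgs}. Hence $i_{k+1}$ is forced to be that sink. I would also need to verify that this identification persists as the parameters evolve: along the sequence the difference of the two ``large'' multiplicities stays equal to $c-b$ and the increment $c-b-a$ is likewise preserved, which keeps $Q_k$ within the hypotheses of Lemma~\ref{lem:good_verticesR}; in the boundary ranges where those hypotheses just lapse, one argues directly --- again via ``head of a multiple edge'' or via Theorem~\ref{thm:rank3_no_mgs} applied to the relevant $3$-subquiver --- that no vertex other than the triangle-sink can be good, so that the sequence is still forced (and if even the triangle-sink is no longer good, the statement for larger $k$ is vacuous).

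With $i_{k+1}$ determined, $Q_{k+1}=\mu_{i_{k+1}}(Q_k)$ is a single mutation, which I would compute directly from the exchange matrix. Mutating at the triangle-sink reverses the three triangle edges --- cycling the roles source, middle, sink --- merges the two single arrows at the sink into arrows at $v$, and sends the multiplicities of the three edges at $v$ to the expected linear combinations; reading off the standard $R$ or $R^{\text{op}}$ form (after the permutation of $\{1,2,3\}$ that restores the labelling of Figure~\ref{fig:rank4_bad}) then gives exactly the quiver named in the formula for length $k+1$, with the parity flipping between $R$ and $R^{\text{op}}$ as in the statement. Composing an even step with the following odd step reproduces the stated recursion $b\mapsto b+(c-b-a)$, $c\mapsto c+(c-b-a)$, which closes the induction.

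The main obstacle I anticipate is the bookkeeping: carrying the relabelling of the three triangle vertices correctly through the alternation between $R$-form and $R^{\text{op}}$-form --- the two step types have different-looking update rules on the parameters, which only combine neatly over an even--odd pair --- and establishing that the unique-good-vertex conclusion of Lemma~\ref{lem:good_verticesR} survives every iteration, including in the degenerate ranges where its literal hypotheses fail. Once those points are settled, each inductive step is a short, mechanical matrix mutation.
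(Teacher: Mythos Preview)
Your proposal is correct and follows essentially the same inductive argument as the paper: invoke Lemma~\ref{lem:good_verticesR} to force the next mutation, compute it, and check that the hypotheses persist so the argument iterates. The paper does this by explicitly computing two consecutive steps and verifying that the new parameters $(a^*,b^*,c^*)=(a,\,b+(c-b-a),\,c+(c-b-a))$ strictly satisfy the same inequalities $c^*-a^*\ge 2$, $c^*-b^*-a^*>0$, $c^*>b^*\ge 2$, so the boundary cases you anticipate never actually arise; one small slip in your recap is that the vertex ruled out by the ``bad rank-$3$ subquiver'' criterion is the triangle's \emph{middle} vertex (vertex~$1$), not the source or~$v$, though this is already covered by your appeal to Lemma~\ref{lem:good_verticesR}.
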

\begin{proof}
We only prove the case when $c>b \geq 2$ as the other case is analogous.

Assume $c>b \geq 2$. By Lemma~\ref{lem:good_verticesR} a good sequence must begin by mutating at 3. Now since $c>b$ we have $\mu_3(R_{a,b,c}) \simeq (R_{c-b,c-a,c})^\text{op}$. By assumption $c-a \geq 2$ so by Lemma~\ref{lem:good_verticesR} a good sequence must mutate at the sink in $\mu_3(R_{a,b,c})[\{1,2,3\}]$, which is vertex 1. Continuing our good sequence by mutating at vertex 1 we see that $$\mu_1\mu_3(R_{a,b,c}) \simeq R_{a,c-a,2c-b-a}.$$ 
Note that $2c-b-a > c-a \geq 2$, and $c-b-a>0$ so $2c-b-a>c$ and $2c-b-2a>c-a \geq 2$. That is, if we set $a^*=a, b^*=c-a = b + (c-b-a)$ and $c^* = 2c-b-a = c + (c-b-a)$ we see that $c^*-a^* \geq 2, c^*-b^* -a^* >0,$ and $c^*>b^* \geq 2$, so all of our hypotheses from the statement of the corollary are again satisfied. 

Repeating the argument above we know at each mutation step we have exactly one choice of vertex to mutate at for our mutation sequence to be a good sequence.  Continuing this process we see that if $\mgsi$ is a good sequence of length $k$, then we obtain the formula given in the statement of the result. 
\end{proof}


\begin{lemma}\label{lem:bad_rank4}
The quivers $R_{a,b,c}$ and $(R_{a,b,c})^\text{op}$ have no maximal green sequence when $$(a,b,c) \in \{(0,2,3),(1,4,3),(0,3,5),(2,5,4),(1,2,4)\}.$$
\end{lemma}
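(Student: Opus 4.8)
The plan is to use the structural results already established, in particular Corollary~\ref{cor:good_sequenceR} and Corollary~\ref{cor:max_are_good}, to reduce the non-existence of a maximal green sequence to the fact that a good sequence for $R_{a,b,c}$ (or its opposite) can never return the quiver to an isomorphic copy of itself. Since by Corollary~\ref{cor:max_are_good} every maximal green sequence is a good sequence, and by Lemma~\ref{lem:iso_mgs} a maximal green sequence must carry the quiver back to an isomorphic quiver, it suffices to show that for each of the five listed triples $(a,b,c)$, no iterate $\mu_{\mgsi}(R_{a,b,c})$ along a good sequence is isomorphic to $R_{a,b,c}$, and likewise for the opposite quiver. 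First I would check that each of the five triples satisfies the hypotheses $c-a \geq 2$ and $|c-b-a|>0$ needed to invoke Corollary~\ref{cor:good_sequenceR} (e.g. for $(0,2,3)$ we have $c-a=3\geq 2$ and $c-b-a=1>0$; for $(1,4,3)$ we have $c-a=2$ and $b+a-c=2>0$; and so on), splitting the list into the cases $c>b$ (namely $(0,2,3)$ and $(0,3,5)$) and $b>c$ (namely $(1,4,3)$, $(2,5,4)$, $(1,2,4)$), and for $(1,2,4)$ note $c>b$ with $c-b-a=1>0$.

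Next I would read off from the explicit formulas in Corollary~\ref{cor:good_sequenceR} what the iterates look like and argue they never coincide with the starting quiver. In the case $c>b\geq 2$ the even iterates are $R_{a,\,b+n d,\,c+n d}$ with $d=c-b-a>0$, so the parameters $b$ and $c$ strictly increase with $n$ and the quiver is never isomorphic to $R_{a,b,c}$ for $n\geq 1$; the odd iterates are opposite-type quivers $(R_{c-b,\ldots})^\text{op}$, and here I would observe that $R_{a,b,c}$ and a quiver of the form $(R_{a',b',c'})^\text{op}$ cannot be isomorphic because the underlying structure (the orientation of the $3$-cycle on vertices $1,2,3$, or the in/out-degree sequence) distinguishes a quiver from its opposite — this is exactly the content of the parenthetical remark in Lemma~\ref{lem:good_verticesR} that the unique good vertex is a sink of the induced $3$-cycle, whereas in the opposite quiver it is a source. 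A short case check of degree sequences or of which vertex is the good one handles this cleanly. The same argument, with $d=b+a-c>0$, disposes of the case $b>c\geq 2$ using the second formula.

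To finish, having shown no good-sequence iterate of $R_{a,b,c}$ is isomorphic to $R_{a,b,c}$, I invoke Corollary~\ref{cor:max_are_good}: a maximal green sequence would be a good sequence, hence by the corollary's recursion every quiver along it is obtained from $R_{a,b,c}$ by the uniquely-forced good mutations, so the terminal quiver is one of the listed iterates and cannot be isomorphic to $R_{a,b,c}$, contradicting Lemma~\ref{lem:iso_mgs}. Therefore $R_{a,b,c}$ has no maximal green sequence; since $Q$ has a maximal green sequence iff $Q^\text{op}$ does by Lemma~\ref{lem:opposite_quiver}, the same conclusion holds for $(R_{a,b,c})^\text{op}$. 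The main obstacle I anticipate is the bookkeeping in the "never isomorphic" step: one must be slightly careful that the good sequence is genuinely forced to be infinite (i.e. that at every stage the hypotheses of Corollary~\ref{cor:good_sequenceR} persist, which the proof of that corollary already records) and that the isomorphism-type invariant chosen — the edge multiplicities $(a^*,b^*,c^*)$ of the embedded $3$-cycle together with the orientation class (quiver vs.\ opposite) — is genuinely preserved under relabelling, so that monotonicity of $b^*,c^*$ really does preclude a return; but both points are immediate from the formulas, so no serious difficulty remains.
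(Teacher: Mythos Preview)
Your proposal is correct and follows essentially the same approach as the paper: verify the hypotheses of Corollary~\ref{cor:good_sequenceR}, use that every maximal green sequence is good (Corollary~\ref{cor:max_are_good}), observe that the strictly increasing parameters in the good-sequence iterates preclude the return to an isomorphic quiver required by Lemma~\ref{lem:iso_mgs}, and pass between $R_{a,b,c}$ and its opposite via Lemma~\ref{lem:opposite_quiver}. The only cosmetic slip is that you initially place $(1,2,4)$ in the $b>c$ group before correcting yourself; the paper groups it with $(0,2,3)$ and $(0,3,5)$ under $c>b$, and for the two genuinely $b>c$ triples it explicitly runs the argument on $(R_{a,b,c})^{\mathrm{op}}$ first and then transfers back, which you do implicitly when invoking ``the second formula.''
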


\begin{proof}
	For all three triples with $c > b \geq 2$, namely $\{(0,2,3),(0,3,5),(1,2,4)\}$
	we have $c-a>2$  and $c-b-a>0$ so Corollary~\ref{cor:good_sequenceR}(1)
	applies. If $\mgsi$ is a maximal green sequence for $R_{a,b,c}$ it is also a
	good sequence. However, if $\mgsi$ is a maximal green sequence then by
	Lemma~\ref{lem:iso_mgs} we have $\mu_{\mgsi}(R_{a,b,c}) = R_{a,b,c}$, which
	contradicts Corollary~\ref{cor:good_sequenceR}. Therefore $R_{a,b,c}$ has no
	maximal green sequence. By Lemma~\ref{lem:opposite_quiver} the opposite
	quivers $(R_{a,b,c})^\text{op}$ do not have a maximal green sequence. 

We may apply an identical argument using Corollary~\ref{cor:good_sequenceR}(2) and Corollary~\ref{cor:max_are_good} to the quivers $(R_{a,b,c})^\text{op}$ for $\{(1,4,3),(2,5,4)\}$ to show that they don't have a maximal green sequence. Then again we may apply Lemma~\ref{lem:opposite_quiver} to see that the quivers $R_{a,b,c}$ do not have a maximal green sequence.
\end{proof}

%
\begin{theorem}\label{thm:rank4_eg}
Let $Q$ be a minimal mutation-infinite quiver of rank 4. Then $\Psi$ is a proper subgraph of $\Gamma$ and the connected component $\widehat{\Psi}$ of $\Psi$ that contains $Q$ is finite and contains the entire move-class of $Q$.
\end{theorem}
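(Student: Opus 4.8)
The statement bundles together three claims: that $\Psi$ is a proper subgraph of $\Gamma$, that the move-class of $Q$ lies in the connected component $\widehat{\Psi}$, and that $\widehat{\Psi}$ is finite. The plan is to dispose of the first two quickly and then concentrate on finiteness.

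For properness, the plan is to use the classification of~\cite{lawson}: there are only finitely many move-classes of rank $4$ minimal mutation-infinite quivers, so it suffices to exhibit, for one representative $Q$ of each, an explicit mutation sequence producing a quiver isomorphic to one of the $R_{a,b,c}$ or $(R_{a,b,c})^{\text{op}}$ listed in Lemma~\ref{lem:bad_rank4}. Since none of those admits a maximal green sequence, the corresponding vertex of $\Gamma$ is not in $\Psi$, so $\Psi \subsetneq \Gamma$. For the move-class: every quiver in it is minimal mutation-infinite of rank $4$ and hence has a maximal green sequence by Theorem~\ref{thm:mmi_mgs}, so the move-class is contained in $\Psi$; moreover a minimal mutation-infinite move is a sequence of mutations through minimal mutation-infinite quivers, so any two move-equivalent quivers are joined by a path in $\Gamma$ all of whose vertices lie in $\Psi$, and therefore the (finite) move-class lies in a single component of $\Psi$, necessarily $\widehat{\Psi}$ since it contains $Q$.

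The heart of the matter is finiteness of $\widehat{\Psi}$. The strategy is to construct, for each rank $4$ move-class, a finite set $F$ of quivers in $\mut(Q)$ with $Q \in F$ such that whenever $S \in F$ and $\mu_k(S) \in \Psi$, already $\mu_k(S) \in F$; a breadth-first search from $Q$ then forces $\widehat{\Psi} \subseteq F$. Concretely one explores $\Gamma$ outward from $Q$, pruning the search at every quiver shown to admit no maximal green sequence, and must argue that this exploration terminates. Two detection mechanisms are available for the quivers encountered: first, by Theorems~\ref{thm:subquiver} and~\ref{thm:rank3_no_mgs}, any rank $4$ quiver with an induced $3$-cycle $Q_{i,j,k}$ all of whose multiplicities are $\geq 2$ has no maximal green sequence; second, any quiver isomorphic to an $R_{a,b,c}$ or $(R_{a,b,c})^{\text{op}}$ whose parameters satisfy the hypotheses of Corollary~\ref{cor:good_sequenceR} has none, because a maximal green sequence would be a good sequence (Corollary~\ref{cor:max_are_good}) fixing the quiver (Lemma~\ref{lem:iso_mgs}), contradicting the runaway behaviour of the unique good sequence described in Lemma~\ref{lem:good_verticesR} and Corollary~\ref{cor:good_sequenceR}, exactly as in Lemma~\ref{lem:bad_rank4}. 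The structural input to be verified for each move-class is that every infinite ray of $\Gamma$ leaving the finite core surrounding the move-class passes through such a ``gateway'' quiver of type $R_{a,b,c}$, beyond which no quiver admits a maximal green sequence; granting this, only finitely many quivers with a maximal green sequence are reachable from $Q$ without crossing one that lacks it, so $\widehat{\Psi}$ is finite and, as it already contains the move-class, we are done.

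The main obstacle is precisely this last structural claim: organising the infinitely many quivers of $\mut(Q)$ far from the move-class into finitely many families, each eventually consisting of $R_{a,b,c}$-type quivers with growing parameters, and checking that the five boundary triples in Lemma~\ref{lem:bad_rank4} really do account for all the ways of leaving the finite core. In practice the verification for each of the finitely many rank $4$ move-classes will be a finite, computer-assisted exploration of $\Gamma$ near $Q$, and its termination rests entirely on Lemma~\ref{lem:good_verticesR}, Corollary~\ref{cor:good_sequenceR} and Lemma~\ref{lem:bad_rank4} correctly identifying the gateway quivers past which $\Psi$ cannot extend; making sure no infinite escape route is missed is the delicate part of the argument.
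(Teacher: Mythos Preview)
Your approach is essentially the paper's: a computer-assisted breadth-first exploration of $\Gamma$ from $Q$, halting at quivers certified to have no maximal green sequence by one of the two detection mechanisms (an induced $3$-cycle $Q_{i,j,k}$ with $i,j,k\ge 2$, or one of the $R_{a,b,c}$ quivers from Lemma~\ref{lem:bad_rank4}), and then checking that the resulting finite region contains the whole move-class. The paper carries this out exactly, and the finiteness portion of your plan is correct and matches it.

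One step in your write-up is not justified as stated. You argue that the move-class lies in a single component of $\Psi$ because ``a minimal mutation-infinite move is a sequence of mutations through minimal mutation-infinite quivers''. The definition in this paper only says that a move, applied to a minimal mutation-infinite quiver, yields a minimal mutation-infinite quiver; it does not assert that every intermediate quiver along the sequence is itself minimal mutation-infinite, so you cannot invoke Theorem~\ref{thm:mmi_mgs} at each step to keep the path inside $\Psi$. The paper sidesteps this by simply observing, after the bounded component $\widehat{\Psi}$ has been computed, that the entire move-class appears in it. Either replace your argument with that direct inspection, or supply a reference to~\cite{lawson} establishing that the specific moves used there do stay within minimal mutation-infinite quivers at every intermediate step.
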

\begin{proof}
For any minimal mutation-finite quiver $Q$ it is easy to see that there exists a quiver $R$ in its mutation class that contains a rank 3 subquiver without a maximal green sequence. Therefore by Theorem~\ref{thm:subquiver} $R$ does not have a maximal green sequence so $\Psi \neq \Gamma$. 

The rest of the claim is verified via direct calculation. For each move-class representative of a rank 4 minimal-mutation infinite quiver given in Figure~\ref{fig:hcsreps} we compute its unlabelled exchange graph. For each new vertex added to the graph we test if the associated quiver has a maximal green sequence. In each case we obtain a component of $\Gamma$ that is bounded by quivers \begin{enumerate}
\item containing a rank 3 subquiver of the form to $Q_{a,b,c}$ with $a,b,c \geq 2$;
\item and the quivers $R_{a,b,c}$ and $(R_{a,b,c})^\text{op}$ for one of the triples considered in Lemma~\ref{lem:bad_rank4}.
\end{enumerate}
In the first case these quivers do not have a maximal green sequence by Theorem~\ref{thm:subquiver} and Theorem~\ref{thm:rank3_no_mgs}. The other two quivers do not have a maximal green sequence by Lemma~\ref{lem:bad_rank4}.

Upon inspection we see that the entire move class of the representative is contained in this component. 
\end{proof}

\begin{example}\label{ex:mgs-graph}
Let $Q$ be the 4th rank 4 quiver given in Figure~\ref{fig:hcsreps}. In Figure~\ref{fig:rank4_eg} we give $\widehat\Psi(Q)$ together with the vertices of $\Gamma(Q)$ that are adjacent to vertices of $\widehat\Psi(Q)$ to illustrate the boundedness of $\widehat\Psi(Q)$. The graph $\widehat\Psi(Q)$ consists of the quivers with a black frame. The quivers with a red frame do not have maximal green sequences and are not a part of $\widehat\Psi(Q)$.
\end{example}
\begin{figure}
\begin{center}
\includegraphics[scale=.6]{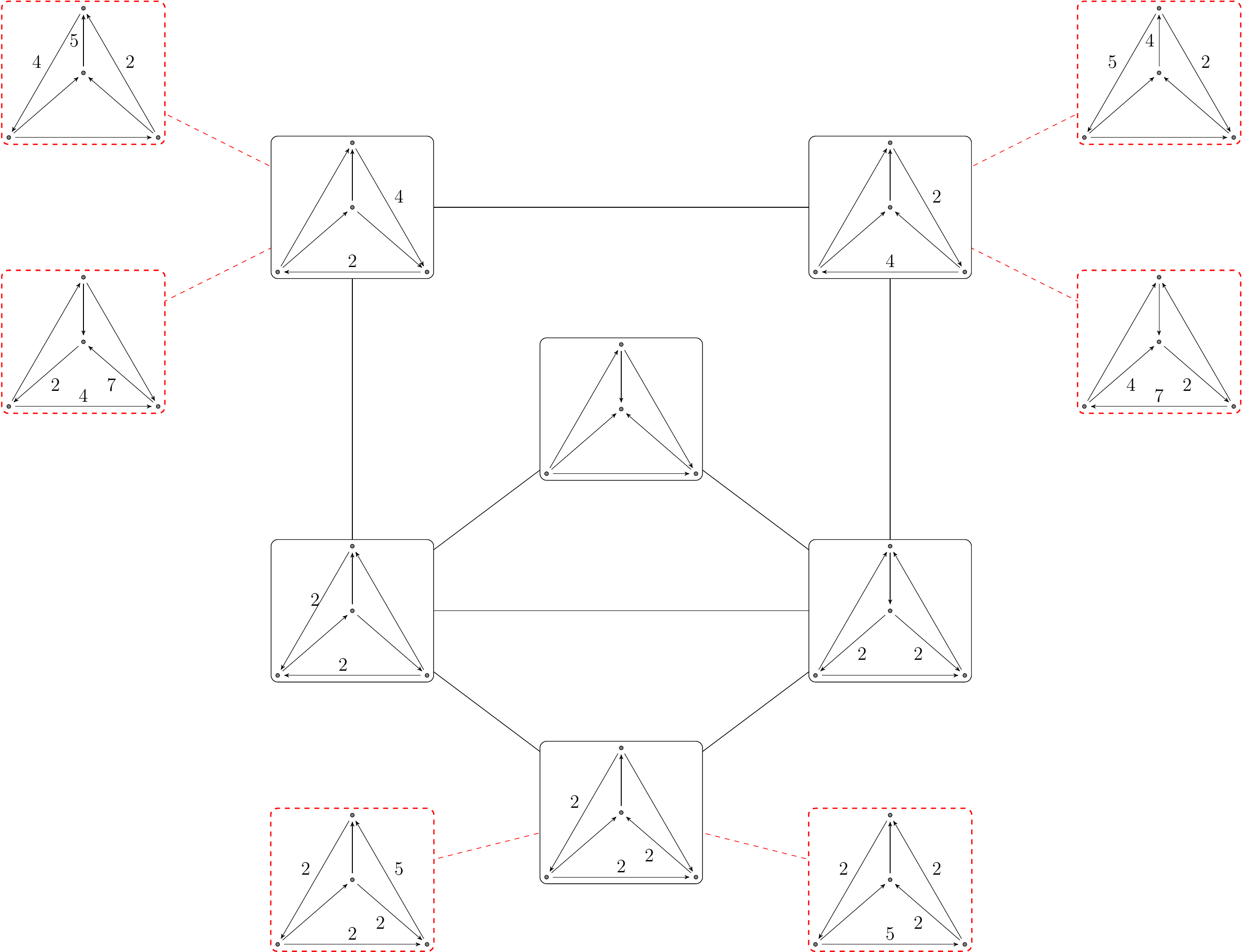}
\end{center}
\caption{A subgraph of the exchange graph of the 4th quiver $Q$ in
Figure~\ref{fig:hcsreps} containing $\widehat\Psi(Q)$, as described in
Example~\ref{ex:mgs-graph}.}
\label{fig:rank4_eg}
\end{figure}

\section{Other questions and conjectures}\label{sec:conj}

It is likely that an identical phenomenon occurs for higher rank minimal mutation-infinite quivers and that it can be shown using techniques similar to the ones presented here. It is straightforward to compute a candidate component for $\widehat\Psi$. The issue is that there are many more quivers that bound this region and it is a long process to show that they do not have maximal green sequences. Already in the case of the first rank 7 mutation class with a hyperbolic Coxeter representative, there are at least 1200 quivers to check that do not posses a rank 3 subquiver without a maximal green sequence. 
\begin{conjecture}
The results of Theorem~\ref{thm:rank4_eg} hold for all minimal mutation-infinite quivers. 
\end{conjecture}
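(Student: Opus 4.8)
The statement has three ingredients: $\Psi\subsetneq\Gamma$, the component $\widehat\Psi\ni Q$ is finite, and $\widehat\Psi$ contains the move-class of $Q$. The first needs no new input and is already argued in general terms in the proof of Theorem~\ref{thm:rank4_eg}: every minimal mutation-infinite quiver is mutation-equivalent to a quiver containing an induced oriented $3$-cycle $Q_{a,b,c}$ with $a,b,c\geq 2$ (this is what forces mutation-infiniteness), and such a quiver has no maximal green sequence by Theorems~\ref{thm:subquiver} and~\ref{thm:rank3_no_mgs}. For the third ingredient the plan is to use that each move of Lawson's classification is realised by a short mutation sequence through minimal mutation-infinite quivers of the same rank; by Theorem~\ref{thm:mmi_mgs} each such quiver has a maximal green sequence, and consecutive quivers are joined by an edge of $\Gamma$, so the entire move-class sits inside a single connected component of $\Psi$. (Should a move pass through a quiver that is only mutation-finite, one still checks it has a maximal green sequence, as in rank $4$: by the argument of Lemma~\ref{lem:no_bad_subquivers} it cannot acquire a once-punctured-closed-surface or $\mathbb{X}_7$ subquiver, so Theorem~\ref{thm:mu_finite_mgs} applies.)

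The real content is finiteness of $\widehat\Psi$. The plan is to produce, for each rank-$n$ move-class representative $Q$ of Figures~\ref{fig:hcsreps},~\ref{fig:dareps},~\ref{fig:excreps}, a finite set $S\subseteq\mut(Q)$ with: (i) $Q\in S$ and every quiver in $S$ has a maximal green sequence; (ii) every quiver of $\mut(Q)$ adjacent in $\Gamma$ to a member of $S$ but not itself in $S$ has no maximal green sequence. Given such an $S$, any maximal-green-sequence quiver adjacent to $S$ already lies in $S$, so the component of $\Psi$ through $Q$ cannot escape $S$; hence $\widehat\Psi\subseteq S$ is finite, and by the previous paragraph it contains the move-class. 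One obtains $S$ by breadth-first search from $Q$ in $\Gamma$, pruning at quivers without a maximal green sequence; condition (i) is verified quiver-by-quiver with the constructive tools of Section~\ref{sec:mgs} — decomposition as a $t$-colored direct sum with Theorem~\ref{thm:directsum-mgs}, ending in a $k$-cycle with Theorem~\ref{thm:triangleend}, or an explicit sequence — exactly as in the proof of Theorem~\ref{thm:mmi_mgs}.

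The crux, and the main obstacle, is condition (ii): showing that all frontier quivers lack a maximal green sequence. These split into two kinds. Those containing an induced $Q_{a,b,c}$ with $a,b,c\geq 2$ are handled immediately by Theorems~\ref{thm:subquiver} and~\ref{thm:rank3_no_mgs}. The rest require the machinery of Lemmas~\ref{lem:good_verticesR}--\ref{lem:bad_rank4}: for each such quiver $R$ one determines its good vertices (by Corollary~\ref{cor:max_are_good} a maximal green sequence is a good sequence, and Lemma~\ref{lem:no_multi_head} together with a subquiver analysis typically pins down a unique good vertex at each step), shows that the forced good sequence moves $R$ through a family parametrised by integers that grow without bound, and then concludes from Lemma~\ref{lem:iso_mgs} that $R$ has no maximal green sequence, using Lemma~\ref{lem:opposite_quiver} to halve the cases. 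The obstruction is sheer volume: already the first rank-$7$ hyperbolic-Coxeter class has on the order of $10^3$ frontier quivers without a bad rank-$3$ subquiver, each apparently demanding its own ``$R_{a,b,c}$-type'' family and its own monotonicity argument, and the number of such families only grows with the rank.

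Consequently the realistic route to the conjecture is not a brute-force rank-by-rank repetition but the identification of a uniform structural description of the frontier quivers that lack a bad rank-$3$ subquiver — some common near-cycle shape generalising $R_{a,b,c}$ — so that a single parametrised lemma, with one monotone invariant controlled via Lemma~\ref{lem:iso_mgs}, disposes of all of them simultaneously. A subtlety to keep in mind is that the monotonicity argument must rule out \emph{every} continuation of a good sequence, so that no maximal-green-sequence quiver can re-enter $\widehat\Psi$ from outside $S$; this is precisely the point where an honest classification of good vertices for the frontier families (as in Lemma~\ref{lem:good_verticesR}) cannot be avoided, and it is the step most likely to resist a fully uniform treatment.
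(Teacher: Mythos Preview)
The statement you are addressing is labelled a \emph{Conjecture} in the paper, and the paper does not prove it. Immediately before stating it, the authors write that the result is ``likely'' to hold and ``can be shown using techniques similar to the ones presented here'', and then name the obstacle: already for the first rank-$7$ hyperbolic Coxeter class there are at least $1200$ frontier quivers without a bad rank-$3$ subquiver, each of which would need its own no-MGS argument. So there is no paper proof to compare against; what can be compared is your strategy versus the authors' own assessment of what would be required, and on that score you are in complete agreement. Your breadth-first search from the representative, pruning at quivers shown to have no maximal green sequence, is exactly the rank-$4$ method, and your estimate of $\sim 10^{3}$ recalcitrant frontier quivers in rank $7$ matches the paper's figure.

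Your write-up is therefore a plan rather than a proof, and you evidently know this: your final two paragraphs say as much and correctly locate the missing ingredient as a uniform structural description of the frontier families generalising $R_{a,b,c}$. One small gap worth flagging in your argument for the third ingredient (that the move-class lies in $\widehat\Psi$): a move in Lawson's classification is a sequence of mutations, and the intermediate quivers need be neither minimal mutation-infinite nor mutation-finite---they could be mutation-infinite with a mutation-infinite proper subquiver---so neither Theorem~\ref{thm:mmi_mgs} nor Theorem~\ref{thm:mu_finite_mgs} covers that case. In rank $4$ the paper sidesteps this entirely by inspecting the computed $\widehat\Psi$ and observing that the move-class sits inside it; for higher rank one would presumably do the same once $\widehat\Psi$ has been produced, rather than argue abstractly through the moves.
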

We also believe that in these cases $\widehat{\Psi} = \Psi$. However we are unable to prove this. This would immediately follow from an affirmative answer to the following question. 
\begin{question}
Is the graph $\Psi$ a connected subgraph of $\Gamma$?
\end{question}
In light of Theorem~\ref{thm:directsum-mgs} we also think that it would be interesting to explore the relationship between $\Psi(Q)$ and $\Psi(Q')$ for two quivers $Q$ and $Q'$ and that of the graph $\Psi(Q \oplus Q')$. 
\begin{conjecture}
Suppose that $Q$ and $Q'$ are two quivers such that $\Psi(Q)$ and $\Psi(Q')$ are finite and non-empty, then $\Psi(Q \oplus Q')$ is finite and non-empty.
\end{conjecture}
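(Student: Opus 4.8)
The plan is to treat the two assertions — finiteness and non-emptiness of $\Psi(Q\oplus Q')$ — separately, non-emptiness being the more tractable one.

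For non-emptiness, the idea is to reduce to Theorem~\ref{thm:directsum-mgs}. Pick representatives $R\in\Psi(Q)$ and $R'\in\Psi(Q')$, so that $R$ and $R'$ each admit a maximal green sequence. If one can exhibit a $t$-coloured direct sum of $R$ and $R'$ (for some choice of connecting arrows) lying in $\mut(Q\oplus Q')$, then Theorem~\ref{thm:directsum-mgs} shows the concatenation of the two sequences is a maximal green sequence for that quiver, whence $\Psi(Q\oplus Q')\neq\emptyset$. To produce such a quiver I would mutate $Q\oplus Q'$ first along a sequence carrying the $Q$-part to $R$ and then along one carrying the $Q'$-part to $R'$, verifying at each step — using the compatibility of direct sums and mutation from \cite{garver} — that a $t$-coloured direct sum structure survives, allowing the tuple of connecting arrows to vary. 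The obstacle here is that a mutation sequence on the $Q$-vertices may be forced to pass through the head $a_i$ of a connecting arrow, which genuinely mixes the two summands; one must either reroute the sequence or carefully track the cross-arrows it creates.

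Finiteness is the real difficulty, and the model to imitate is Theorem~\ref{thm:rank4_eg}: there a component of $\Psi$ is bounded because crossing its boundary forces, via Theorem~\ref{thm:subquiver}, a rank-$3$ subquiver $Q_{a,b,c}$ with $a,b,c\geq 2$, or one of the explicit obstruction quivers of Lemma~\ref{lem:bad_rank4}. The analogue I would attempt is: for $S\in\mut(Q\oplus Q')$ admitting a maximal green sequence, the induced subquivers on the $Q$-vertices and on the $Q'$-vertices should — using Theorem~\ref{thm:subquiver} together with Lemma~\ref{lem:rotation_block} — behave like quivers in $\Psi(Q)$ and $\Psi(Q')$, while the amount of ``mixing'' between the two parts stays bounded; since $\Psi(Q)$ and $\Psi(Q')$ are finite by hypothesis this would bound $S$. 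The main obstacle is controlling precisely this mixing: mutating at a connecting vertex creates arrows between the two halves, the induced subquiver on the $Q$-vertices need no longer lie in $\mut(Q)$, and nothing a priori prevents the two parts from becoming arbitrarily entangled along a maximal green sequence. One would want a ``good sequence'' argument in the spirit of Corollary~\ref{cor:max_are_good} and Corollary~\ref{cor:good_sequenceR}, forcing a unique admissible mutation at each step together with a monotone quantity (such as a total arrow multiplicity) that eventually produces a bad induced subquiver; carrying this out for arbitrary $Q$ and $Q'$ is exactly the gap that keeps the statement a conjecture.
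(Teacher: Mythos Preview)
The statement you are addressing is labelled a \emph{Conjecture} in the paper (Section~\ref{sec:conj}), and the paper offers no proof or even a proof sketch for it; it is simply posed as an open problem motivated by Theorem~\ref{thm:directsum-mgs}. There is therefore nothing in the paper to compare your proposal against.

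That said, your write-up is not a proof and you do not claim it is one: you correctly flag both halves as incomplete and end by naming ``exactly the gap that keeps the statement a conjecture.'' Your assessment of the difficulties is accurate. For non-emptiness, the obstacle you isolate is genuine: even given $R\in\Psi(Q)$ and $R'\in\Psi(Q')$, one does not know that any $t$-coloured direct sum of $R$ and $R'$ lies in $\mut(Q\oplus Q')$, because mutating at a vertex incident to a connecting arrow destroys the direct-sum decomposition and the compatibility results in~\cite{garver} do not give you control over this. For finiteness, the problem you describe --- that the induced subquiver on the $Q$-vertices of some $S\in\mut(Q\oplus Q')$ need not lie in $\mut(Q)$, and that the two halves can become arbitrarily entangled --- is precisely why no argument along the lines of Theorem~\ref{thm:rank4_eg} is currently available in this generality.

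In short: there is no proof in the paper to grade against, your proposal is an honest outline of a plausible strategy together with its obstructions, and those obstructions are real.
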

One goal of this approach would be to answer the following question. 
\begin{question}
For any quiver $Q,$ are there finitely many quivers in $\mut(Q)$ that have a maximal green sequence?
\end{question}

\bibliographystyle{hplain}
\bibliography{bibliography}

\begin{thebibliography}{10}

\bibitem{ACCERV}
Murad Alim, Sergio Cecotti, Clay C{\'o}rdova, Sam Espahbodi, Ashwin Rastogi,
  and Cumrun Vafa.
\newblock B{PS} quivers and spectra of complete {$\mathcal{N}=2$} quantum field
  theories.
\newblock {\em Comm. Math. Phys.}, 323(3):1185--1227, 2013.

\bibitem{assem}
I.~Assem, M.~Blais, T.~Br\"ustle, and A.~Samson.
\newblock Mutation classes of skew-symmetric 3x3-matrices.
\newblock {\em Comm. Algebra}, 36(4):1209--1220, 2008.

\bibitem{BGZ-cartan}
Michael Barot, Christof Geiss, and Andrei Zelevinsky.
\newblock Cluster algebras of finite type and positive symmetrizable matrices.
\newblock {\em J. London Math. Soc. (2)}, 73(3):545--564, 2006.

\bibitem{bbh}
A.~Beineke, T.~Br\"ustle, and L.~Hille.
\newblock Cluster-cyclic quivers with three vertices and the markov equation.
\newblock {\em Algebr. Represent. Theory}, 14(1):97--112, 2011.
\newblock With an appendix by Otto Kerner.

\bibitem{BFZ}
Arkady Berenstein, Sergey Fomin, and Andrei Zelevinsky.
\newblock Cluster algebras. {III}. {U}pper bounds and double {B}ruhat cells.
\newblock {\em Duke Math. J.}, 126(1):1--52, 2005.

\bibitem{brustle}
Thomas Br{\"u}stle, Gr{\'e}goire Dupont, and Matthieu P{\'e}rotin.
\newblock On maximal green sequences.
\newblock {\em Int. Math. Res. Not. IMRN}, (16):4547--4586, 2014.

\bibitem{brustle2}
Thomas Br{\"u}stle, Stephen Hermes, Kiyoshi Igusa, and Gordana Todorov.
\newblock Semi-invariant pictures and two conjectures on maximal green
  sequences.
\newblock 2015, arXiv:1503.07945.

\bibitem{BMR-clustermut}
Aslak~Bakke Buan, Robert~J. Marsh, and Idun Reiten.
\newblock Cluster mutation via quiver representations.
\newblock {\em Comment. Math. Helv.}, 83(1):143--177, 2008.

\bibitem{caldero-keller}
P.~Caldero and B.~Keller.
\newblock From triangulted categories to cluster algebras ii.
\newblock {\em Ann. Sci. École Norm. Sup.}, 4:983--–1009, 2006.

\bibitem{derksen}
Harm Derksen, Jerzy Weyman, and Andrei Zelevinsky.
\newblock Quivers with potentials and their representations {II}: applications
  to cluster algebras.
\newblock {\em J. Amer. Math. Soc.}, 23(3):749--790, 2010.

\bibitem{felikson}
Anna Felikson, Michael Shapiro, and Pavel Tumarkin.
\newblock Skew-symmetric cluster algebras of finite mutation type.
\newblock {\em J. Eur. Math. Soc. (JEMS)}, 14(4):1135--1180, 2012.

\bibitem{cluster1}
Sergey Fomin and Andrei Zelevinsky.
\newblock Cluster algebras. {I}. {F}oundations.
\newblock {\em J. Amer. Math. Soc.}, 15(2):497--529 (electronic), 2002.

\bibitem{garver}
Alexander Garver and Gregg Musiker.
\newblock On maximal green sequences for type {A} quivers.
\newblock 2014, arXiv:1402.6149.

\bibitem{gross}
Mark Gross, Paul Hacking, Sean Keel, and Maxim Kontsevich.
\newblock Canonical bases for cluster algeras.
\newblock 2014, arXiv:1411.1394.

\bibitem{keller2}
Bernhard Keller.
\newblock On cluster theory and quantum dilogarithm identities.
\newblock In {\em Representations of algebras and related topics}, EMS Ser.
  Congr. Rep., pages 85--116. Eur. Math. Soc., Z\"urich, 2011.

\bibitem{ladkani-numarrows}
Sefi Ladkani.
\newblock Which mutation classes of quivers have constant number of arrows?
\newblock 2011, arXiv:1104.0436.

\bibitem{lawson}
John~W. Lawson.
\newblock Minimal mutation-infinite quivers.
\newblock 2015, arXiv:1505.01735.

\bibitem{llm}
Kyungyong Lee, Li~Li, and Matthew~R. Mills.
\newblock A combinatorial formula for certain elements of upper cluster
  algebras.
\newblock {\em SIGMA Symmetry Integrability Geom. Methods Appl.}, 11:Paper 049,
  24, 2015.

\bibitem{mills}
Matthew~R. Mills.
\newblock Maximal green sequences for mutation finite quivers.
\newblock 2016, arXiv:1606.03799.

\bibitem{M2}
Greg Muller.
\newblock Locally acyclic cluster algebras.
\newblock {\em Adv. Math.}, 233:207--247, 2013.

\bibitem{M}
Greg Muller.
\newblock {$\mathcal{A}=\mathcal{U}$} for locally acyclic cluster algebras.
\newblock {\em SIGMA Symmetry Integrability Geom. Methods Appl.}, 10:Paper 094,
  8, 2014.

\bibitem{muller}
Greg Muller.
\newblock The existence of maximal green sequences is not invariant under
  mutation.
\newblock 2015, arXiv:1503.04675.

\bibitem{MS}
Greg Muller and David~E. Speyer.
\newblock Cluster algebras of {G}rassmannians are locally acyclic.
\newblock {\em Proc. Amer. Math. Soc.}, 144(8):3267--3281, 2016.

\bibitem{seven-semipos}
Ahmet~I. Seven.
\newblock Cluster algebras and semipositive symmetrizable matrices.
\newblock {\em Trans. Amer. Math. Soc.}, 363(5):2733--2762, 2011.

\bibitem{S1}
Ahmet~I. Seven.
\newblock Maximal green sequences of skew-symmetrizable {$3\times3$} matrices.
\newblock {\em Linear Algebra Appl.}, 440:125--130, 2014.

\bibitem{seven-symmetric}
Ahmet~I. Seven.
\newblock Cluster algebras and symmetric matrices.
\newblock {\em Proc. Amer. Math. Soc.}, 143(2):469--478, 2015.

\end{thebibliography}
\addresshere

\newpage
\appendix
\section*{Appendix of tables}
We provide tables of mutation invariants for the different move-classes of the minimal mutation infinite-quivers. We label the move-classes first by the rank of the quivers and then with a subscript referring to the order in which their representative appears in Figure~\ref{fig:hcsreps}, Figure~\ref{fig:dareps}, and  Figure~\ref{fig:excreps}. The starred values are conjectural. It is a well-known fact that the determinant of the matrix $B_Q$ is also invariant under mutation, but this invariant does not give us any new information about the mutation-classes so it is omitted.
\begin{table}[h]
\begin{tabular}{|c|c|c|c|}
\hline 
Move-class & rank$(B_Q)$  & Acyclic Quivers & Non-acyclic in $\widehat\Psi(Q)$ \\ 
\hline 
$4_1$ & 4 & 6 & 14 \\ 
\hline 
$4_2$ & 2  & 4 & 12 \\ 
\hline 
$4_3$ & 4  & 2 & 13 \\ 
\hline 
$4_4$ & 4  & 1 & 5 \\ 
\hline 
$4_5$ & 4 & 0 & 17 \\ 
\hline 
$4_6$ & 4 & 6 & 14 \\ 
\hline 
%
$5_1$ & 4 & 8 & $80^*$ \\ 
\hline 
$5_2$ & 4  & 10 & $55^*$ \\ 
\hline 
$5_3$ & 4  & 5 & $101^*$ \\ 
\hline 
$5_4$ & 2  & 5 & $25^*$ \\ 
\hline 
\end{tabular} 
\caption{Rank 4 and 5 hyperbolic Coxeter simplex move-classes.}\label{tab1}
\end{table}
\begin{table}
\begin{tabular}{|c|c|c|c|}
\hline 
Move-class & rank$(B_Q)$ & Acyclic Quivers  \\
\hline 
$6_1$ & 4 \ & 16 \\ 
\hline 
$6_2$ & 2 & 6 \\ 
\hline 
$6_3$ & 6 & 10 \\ 
\hline 
$6_4$ & 6 & 20 \\ 
\hline 
$7_1$& 6 & 48 \\ 
\hline 
$7_2$ & 6 & 12 \\ 
\hline 
$7_3$ & 6 & 30 \\ 
\hline 
$7_4$ & 6 & 28 \\ 
\hline 
$8_1$ & 8 & 80 \\ 
\hline 
$8_2$ & 6 & 96 \\ 
\hline 
$8_3$ & 8 & 14 \\ 
\hline 
$8_4$ & 8 & 42 \\ 
\hline 
$8_5$ & 8 & 70 \\ 
\hline 
$9_1$ & 8 & 219 \\ 
\hline 
$9_2$ & 8 & 151 \\ 
\hline 
$9_3$ & 8 & 16 \\ 
\hline 
$9_4$ & 8 & 55 \\ 
\hline 
$9_5$ & 8 & 95 \\ 
\hline 
$9_6$ & 8 & 76 \\ 
\hline 
$10_1$ & 10  & 225 \\ 
\hline 
$10_2$ & 8 & 138 \\ 
\hline 
\end{tabular} 
\caption{Higher rank hyperbolic Coxeter simplex move-classes.}\label{tab2}
\end{table}
\begin{table}
\begin{minipage}{.48\textwidth}
\centering
\begin{tabular}{|c|c|}
\hline 
Move-class & rank$(B_Q)$   \\
\hline 
$6_5$ & 6 \\ 
\hline 
$6_6$ & 4 \\ 
\hline 
$7_5$ & 6 \\ 
\hline 
$8_6$ & 8  \\ 
\hline 
$9_7$ & 8 \\ 
\hline 
$10_3$ & 10 \\ 
\hline 
\end{tabular} 
\caption{Double arrow move-classes.}\label{tab3}
\end{minipage} 
\begin{minipage}{.48\textwidth}
\centering
\begin{tabular}{|c|c|}
\hline 
Move-class & rank$(B_Q)$    \\
\hline 
$7_6$ & 6  \\ 
\hline 
$8_7$ & 6  \\ 
\hline 
$8_8$ & 8  \\ 
\hline 
$9_8$ & 8 \\ 
\hline 
$9_9$ & 8  \\ 
\hline 
$9_{10}$ & 8\\ 
\hline 
$10_4$ & 10  \\ 
\hline 
$10_5$ & 10  \\ 
\hline 
$10_6$ & 8  \\ 
\hline 
$10_7$ & 10  \\ 
\hline 
\end{tabular} 
\caption{Exceptional move-classes.} \label{tab4}
\end{minipage}
\end{table}

\end{document}